\documentclass[pdflatex,sn-mathphys-num]{sn-jnl}

\usepackage{graphicx}%
\usepackage{multirow}%
\usepackage{amsmath,amssymb, amsthm, amsfonts}%
\usepackage{amsthm}%
\usepackage{mathrsfs}%
\usepackage[title]{appendix}%
\usepackage{xcolor}%
\usepackage{textcomp}%
\usepackage{manyfoot}%
\usepackage{booktabs}%
\usepackage{algorithm}%
\usepackage{algorithmicx}%
\usepackage{algpseudocode}%
\usepackage{listings}%
\usepackage{lipsum}
\usepackage{bbm}
\usepackage{mathtools}
\usepackage{nicematrix}
\usepackage{aligned-overset}
\usepackage{setspace}
\usepackage{enumitem}
\usepackage[T1]{fontenc}


\theoremstyle{thmstyletwo}
\newtheorem{definition}{Definition}[section]
\theoremstyle{thmstyleone}
\newtheorem{theorem}[definition]{Theorem}
\newtheorem{lemma}[definition]{Lemma} 
\newtheorem{proposition}[definition]{Proposition} 
\newtheorem{corollary}[definition]{Corollary}
\theoremstyle{thmstyletwo}
\newtheorem{example}[definition]{Example}
\newtheorem{remark}[definition]{Remark}

\newcommand{\fancyh}{\mathfrak{h}}
\newcommand{\fancyg}{\mathfrak{g}}
\newcommand{\fancym}{\mathfrak{m}}
\newcommand{\coadjoint}{\operatorname{Ad}^*}
\newcommand{\Adjoint}{\operatorname{Ad}}
\newcommand{\adjoint}{\operatorname{ad}}
\newcommand{\rdbracket}{[\mathbb{R}^d]}
\newcommand{\R}{\mathbb{R}}

\newcommand{\C}{\mathbb{C}}
\newcommand{\rank}{\operatorname{rank}}

\raggedbottom

\begin{document}
	
	\title[Article Title]{Fourier Inversion on the Group of Signatures}
	

	\author[1,2]{\fnm{Frank} \sur{Filbir}}\email{frank-dieter.filbir@helmholtz-munich.de}
    
    \author*[1]{\fnm{Davide} \sur{Nobile}}\email{davide.nobile@tum.de}
	
	\author[1]{\fnm{Marco} \sur{Rauscher}}\email{marco.rauscher@tum.de}

	\affil[1]{\orgdiv{Department of Mathematics}, \orgname{Technical University of Munich}, \orgaddress{\street{Boltzmannstr. 3}, \city{Garching bei M\"unchen}, \postcode{85748}, \country{Germany}}}
	
	\affil[2]{\orgdiv{Department of Mathematical Imaging and Data Analysis}, \orgname{Helmholtz Zentrum M\"unchen}, \orgaddress{\street{Ingolst\"adter Landstr. 1}, \city{Neuherberg}, \postcode{85764}, \country{Germany}}}

	
	\abstract{The main objective of this work is to develop a framework for Fourier analysis on the group of signatures, $G_N(\R^d)$. Employing Kirillov's orbit method, we define the Fourier transform on this group via irreducible unitary representations. Our main contribution is the derivation of necessary and sufficient conditions for identifying coadjoint orbits in general position of $G_N(\R^d)$. This enables the computation of the set of \textit{jump-indices} of generic orbits, crucial for the Fourier inversion theorem. We also obtain an explicit construction of a polarization for any linear functional in general position, which allows a concrete description of the Fourier transform of functions on $G_N(\R^d)$. This framework yields an explicit Fourier inversion formula for the group of signatures in arbitrary dimension. 
		
		Furthermore, we show that the theoretical framework developed here extends naturally to a broader class of graded Lie groups, including the group generated by the truncated tensor algebra $T_0^N(\R^d)$.}
	
	\keywords{Fourier inversion, Signatures, General orbits, Graded nilpotent Lie groups, Polarization}

	\maketitle

    \section{Introduction}\label{sec1}

In many areas of data analysis---ranging from machine learning and hypothesis testing to the approximation of solutions of stochastic differential equations---sequential data arise naturally. Such data can often be represented within spaces of continuous functions of prescribed regularity, commonly referred to as paths. A central challenge in their analysis lies in efficiently capturing the inherent temporal order of the sequence. Rather than working directly with the path itself, it is often advantageous to employ a more compact and expressive representation. The signature map offers precisely such an encoding.

The signature can be formulated for paths with different regularity. For simplicity, we restrict ourselves to continuous paths of bounded variation.  
\begin{equation*}
    BV([0,1],\R^d)\coloneqq \bigg\{x\in C([0,1],\R^d)\;\big|\;\sup_{p\in D([0,1])} \sum_{i\in p} |x_{i+1}-x_i|_{\R^d} <\infty \bigg\} ,
\end{equation*}
where $D([0,1])$ denotes the set of all finite partitions of the interval $[0,1]$. The truncated level $N$ signature maps paths $x(\cdot) \in BV([0,1],\R^d)$ to a real tensor space and is defined as follows:
\begin{equation*}
    S_N(x)_{s, t} \coloneqq\left(1, \int_{s<u<t} d x_u, \ldots, \int_{s<u_1<\ldots<u_k<t} d x_{u_1} \otimes \ldots \otimes d x_{u_k}\right)\in G_N(\R^d),
\end{equation*}
where $G_N(\R^d) \subset T^N(\R^d)\coloneqq \bigoplus_{k=1}^N (\R^d)^{\otimes k}$ forms a subset in the tensor space. We refer to Section \ref{sec: signatures} for a precise definition of this notation. Moreover, the image under the truncated signature map, denoted by $G_N(\R^d)$, forms a Lie group w.r.t. the tensor product. For the non-truncated case, signatures are denoted as $S(x)_{s,t}$ and they form an infinite-dimensional Lie group in the completed tensor algebra $T((\R^d))$ called $G(\R^d)$. 

A central result in the theory of signatures is their universality, i.e., every continuous functional on a compact subset of $BV([0,1], \R^d)$ can be uniformly approximated by linear functionals of the signature. Chevyrev and Lyons \cite{Chevyrev_2016} established that---under suitable conditions---the law of a random path is uniquely determined by its expected signature. This result was later extended by Chevyrev and Oberhauser \cite{chevyrev2022signature} to more general paths using a so-called robust signature. These results naturally induce a distance between probability measures on the path space, called the Signature Maximum Mean Discrepancy. This metric has been employed in various applications, including as a loss function in generative models \cite{ni2021sig} and as a test statistic in nonparametric hypothesis testing \cite{hao_ni2024}. 

In \cite{Chevyrev_2016} and \cite{hao_ni2024} the authors introduce and study an analogue of the classical characteristic function for random variables taking values on the group of signatures. Specifically, for a $G(\R^d)$-valued random variable $X$, the characteristic function is defined as
\begin{equation*}
    \Phi_X(M)\coloneq \mathbb E[\tilde M(X)],
\end{equation*}
where $M$ is a unitary representation of $\R^d$ and $\tilde M$ its natural extension to $G(\R^d)$. The key idea in this line of work is the use of representations satisfying a point-separating property, which ensures that the associated characteristic function uniquely determines the law of the distribution.

In practical applications, one typically works with truncated signatures, as handling the full infinite sequence is often numerically infeasible. This approach is justified by the factorial decay of iterated integrals (see, e.g., \cite{chevyrev2022signature}), which ensures that the truncated signature maintains good approximation properties.

Therefore, it is natural to seek a deeper understanding of the truncated group $G_N(\R^d)$ and the approximation behavior of signatures taking values therein. This work aims to investigate the structure of truncated signatures through the lens of harmonic analysis, establishing a foundation for decomposing functions on $G_N(\R^d)$ into components belonging to subspaces of unitary irreducible representations. In particular, the ultimate goal of this article is to specify the unitary dual $\hat G_N(\R^d)$, which contains the equivalence classes of all unitary, irreducible representations of $G_N(\R^d)$, and to develop a meaningful theory of Fourier analysis on the group of truncated signatures. 

The concept of Fourier transform can be generalized to functions on a non-commutative, locally compact group $G$ using unitary representations. In this setting, the Fourier transform of a function $f \in L^1(G)$ is defined as a map on the unitary dual $\hat{G}$ of $G$, that assigns to each equivalence class $\pi \in \hat{G}$ a linear operator defined by
\begin{equation*}
    \hat{f}(\pi) = \pi(f) = \int_G f(x)\pi(x)dx \;.
\end{equation*}
Kirillov \cite{kirillov1962unitary} showed that when $G$ is a connected, simply connected nilpotent Lie group with Lie algebra $\fancyg$, its unitary dual $\hat{G}$ can be parameterized by linear functionals in $\fancyg^*$. This means that every equivalence class of linear functionals $\ell \in \fancyg^*$ can be identified with an element $\pi \in \hat{G}$ denoted as $\pi_\ell$, where two linear functionals are equivalent whenever they lie in the same coadjoint orbit. Further, the following Fourier inversion formula holds for any Schwartz function on $G$
	 \cite[Theorem~4.3.9]{corwin1990representations}
	\begin{equation}\label{eq: intor corwin inversion formula}
		f(x) = \int_{U \cap (\fancyg^*)_T}\sqrt{\operatorname{det}D(\ell)}\operatorname{Tr}\left( \pi_\ell(x)^{-1}\pi_\ell(f)\right)d\ell \;.
	\end{equation}
	Here, $\fancyg^* \cong \R^m$ denotes the dual of the Lie algebra of $G$, $T$ is a subset of basis elements of $\fancyg^*$ and $(\fancyg^*)_T = \R$-span$\{e_i \;:\; e_i \in T\}$. The measure $d\ell$ is the standard Lebesgue measure on $(\fancyg^*)_T$, and $U$ denotes the set of coadjoint orbits in general position. This brings us to the main challenge in applying the theory, which is characterizing the orbits in general position of the group $G$. To the best of our knowledge, this has only been carried out explicitly for a few specific examples; see, for example, \cite{corwin1990representations} and \cite{kirillov2004lectures}. Moreover, explicitly constructing the representation $\pi_\ell$ typically requires finding a \textit{polarization} of $\ell \in \fancyg^*$, which is itself a difficult and often infeasible problem.
	
    The main contribution of our work is Theorem \ref{thm: characterization of gen orbits}, which establishes necessary and sufficient conditions for any coadjoint orbit of $G_N(\R^d)$ to be in general position. This characterization enables the explicit computation of the set $T$ and the matrix $D(l)$ in (\ref{eq: intor corwin inversion formula}); see Lemma \ref{lem: S and T}. Additionally, it allows us to construct a polarization explicitly for any linear functional in general position; see Corollary \ref{cor: explicit polarization}. Finally, it enables us to define the Fourier transform as a function on a subspace of $g_N^*(\R^d)$ instead of on a set of equivalence classes. Although we focus on the group of signatures, in Corollary \ref{cor: generalization of results 1} and Remark \ref{rem: generalization of results 2}, we show that all our results and proofs remain valid when the group of signatures is replaced by a more general graded Lie group satisfying certain regularity conditions. 
    
    We now outline the structure of this paper. Section \ref{sec: preliminaries} is dedicated to recalling some basic definitions and results that will be used throughout. Specifically, we recall the construction of the group of signatures and some key results from the theory of nilpotent Lie groups.
	
	In Section \ref{sec: Harmonic analysis}, we develop the framework for harmonic analysis on $G_N(\R^d)$.  We begin by constructing invariant measures on the group $G_N(\R^d)$ and its homogeneous spaces $G_N(\R^d)/H$. We then introduce the concept of induced representations, which plays a central role in Kirillov theory. Finally, we present Kirillov's orbits method in our context, which provides a complete description of the unitary dual of $G_N(\R^d)$.
	
	Section \ref{chapter: A Fourier inversion formula for the group of Signatures} is dedicated to the development of Fourier analysis for the group of signatures. We introduce the notion of orbits in general position, which is key to the formulation of the inversion theorem. The central result of this chapter is Theorem \ref{thm: characterization of gen orbits}, which gives a complete characterization of these orbits for the group $G_N(\R^d)$ for arbitrary $N$ and $d$. This characterization enables the computation of the general dimensions and jump indices of the coadjoint orbits of $G_N(\R^d)$, as well as the explicit construction of a polarization $\fancyh$ for any $l \in \fancyg_N(\R^d)^*$ in general position.

\section{Preliminaries}\label{sec: preliminaries}

\subsection{Signatures}\label{sec: signatures}
	We begin by recalling the construction of the group of signatures. For $N \in \mathbb{N}$, the $N$-step tensor algebra over $\R^d$ is defined as the real algebra
	\begin{equation}
		T^N(\mathbb{R}^d) \coloneqq \bigoplus_{k = 0}^N \;(\mathbb{R}^d)^{\otimes k} \;,
	\end{equation}
	where $(\mathbb{R}^d)^{\otimes k}$ denotes the $k$-fold tensor product of $\R^d$ and $(\mathbb{R}^d)^{\otimes 0} = \mathbb{R}$. This space becomes an algebra under component-wise addition, and a product defined by the tensor multiplication
	\begin{equation}\label{eq: deifinition of tensor product}
		p_k(g\otimes h) = \sum_{i=1}^k p_i(g) \otimes p_{k-i}(h),\ \forall k\in \{1,\dots,N\}
	\end{equation}
	for all $g,h \in T^N(\mathbb{R}^d)$, where $p_k(g)=g^k\in (\R^d)^{\otimes k}$ denotes the projection to the $k$-th component.
	We also define the subsets
	\begin{align*}
		&T_0^N\left(\mathbb{R}^d\right) = \left\{g \in T^N\left(\mathbb{R}^d\right): p_0(g)=0\right\} \cong \bigoplus_{k = 1}^N \;(\mathbb{R}^d)^{\otimes k} \quad \text{and}\\
		&T_1^N\left(\mathbb{R}^d\right) = 1+ T_0^N\left(\mathbb{R}^d\right)= \left\{g \in T^N\left(\mathbb{R}^d\right): p_0(g)=1\right\} \;.
	\end{align*}
	and, on these spaces, we define the exponential and logarithmic maps:
	\begin{align}\label{eq: exponential and logarithm}
		\exp :\; &T_0^N\left(\mathbb{R}^d\right) \rightarrow T_1^N\left(\mathbb{R}^d\right), \quad X  \mapsto 1+\sum_{k=1}^N \frac{X^{\otimes k}}{k!}\\
		\log :\; &T_1^N\left(\mathbb{R}^d\right) \rightarrow T_0^N\left(\mathbb{R}^d\right), \quad (1+X) \mapsto \sum_{k=1}^N(-1)^{k+1} \frac{X^{\otimes k}}{k} \;.
	\end{align}
	Then, $\exp$ is a bijection with inverse given by $\log$ (see \cite[Lemma~2.21]{lyons2007roughpaths}. Further, both $\exp$ and $\log$ are continuously differentiable, as they are essentially polynomial maps in the coordinates of $X$, and therefore, $\exp$ is a global diffeomorphism $T_0^N\left(\mathbb{R}^d\right) \to T_1^N\left(\mathbb{R}^d\right)$. The set  $T_1^N\left(\mathbb{R}^d\right)$ becomes a Lie group with group operation given by the tensor product defined in (\ref{eq: deifinition of tensor product}) restricted to $T_1^N\left(\mathbb{R}^d\right)$; see \cite[Proposition 7.17]{friz2010multidimensional}. Moreover, $T_0^N\left(\mathbb{R}^d\right)$ becomes a nilpotent Lie algebra, by defining the Lie bracket operation	
	\begin{equation*}
		[\cdot, \cdot]: T_0^N\left(\mathbb{R}^d\right) \times T_0^N\left(\mathbb{R}^d\right) \to T_0^N\left(\mathbb{R}^d\right), \; [X,Y] = X \otimes Y - Y \otimes X
	\end{equation*}
	as shown in \cite[Proposition 7.19]{friz2010multidimensional}. All in all, we obtain the following result.
	\begin{lemma}
		$T_1^N\left(\mathbb{R}^d\right)$ is a connected, simply connected nilpotent Lie group with Lie algebra $T_0^N\left(\mathbb{R}^d\right)$.
	\end{lemma}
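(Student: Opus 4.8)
The plan is to assemble the statement from the pieces already in place: by \cite[Proposition~7.17]{friz2010multidimensional} the set $T_1^N(\R^d)$ is a Lie group under the tensor product, and by \cite[Proposition~7.19]{friz2010multidimensional} the vector space $T_0^N(\R^d)$ is a nilpotent Lie algebra under the commutator bracket $[X,Y]=X\otimes Y-Y\otimes X$. What remains to be checked is that $T_1^N(\R^d)$ is connected and simply connected, that $T_0^N(\R^d)$ equipped with this bracket is precisely the Lie algebra of $T_1^N(\R^d)$, and that the group itself is nilpotent.

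First I would dispatch connectedness and simple connectedness. The set $T_1^N(\R^d)=1+T_0^N(\R^d)$ is an affine subspace of the finite-dimensional real vector space $T^N(\R^d)$, and, as recorded in the excerpt, $\exp\colon T_0^N(\R^d)\to T_1^N(\R^d)$ is a global diffeomorphism with inverse $\log$. Since $T_0^N(\R^d)\cong\bigoplus_{k=1}^N(\R^d)^{\otimes k}$ is a real vector space, hence contractible, it is connected and simply connected; transporting these properties through the diffeomorphism yields the same for $T_1^N(\R^d)$.

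Next I would identify the Lie algebra. Because $T_1^N(\R^d)$ is an affine subspace of $T^N(\R^d)$ with underlying direction space $T_0^N(\R^d)$, its tangent space at the identity $\mathbf{1}=(1,0,\dots,0)$ is canonically the linear space $T_0^N(\R^d)$. To see that the induced bracket is the commutator, I would note that for $X\in T_0^N(\R^d)$ the curve $t\mapsto 1+tX$ takes values in $T_1^N(\R^d)$, has velocity $X$ at $t=0$, and has group inverse $(1+tX)^{-1}=\sum_{k\ge 0}(-1)^k t^k X^{\otimes k}$, a finite sum since $X^{\otimes k}=0$ in $T^N(\R^d)$ for $k>N$. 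Differentiating $t\mapsto\Adjoint(1+tX)Y=(1+tX)\otimes Y\otimes(1+tX)^{-1}$ at $t=0$, or equivalently computing the mixed second derivative at $t=s=0$ of the group commutator $(1+tX)(1+sY)(1+tX)^{-1}(1+sY)^{-1}$, gives $X\otimes Y-Y\otimes X$. Hence the bracket of left-invariant vector fields on $T_1^N(\R^d)$ reproduces exactly the bracket under which $T_0^N(\R^d)$ is a Lie algebra in \cite[Proposition~7.19]{friz2010multidimensional}; alternatively one may invoke the standard fact that a Lie subgroup of the group of units of a finite-dimensional associative algebra carries the commutator bracket on its Lie algebra.

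Finally, nilpotency of the group follows from nilpotency of the Lie algebra: the descending central series of $\mathfrak g=T_0^N(\R^d)$ satisfies $\mathfrak g^{(k)}\subseteq\bigoplus_{j=k}^N(\R^d)^{\otimes j}$ by the graded structure of the tensor product in $(\ref{eq: deifinition of tensor product})$, so $\mathfrak g^{(N+1)}=\{0\}$, and since a connected, simply connected Lie group is nilpotent exactly when its Lie algebra is, $T_1^N(\R^d)$ is a nilpotent Lie group. The only step with genuine content is the bracket computation above; everything else is bookkeeping with the grading of $T^N(\R^d)$ and the diffeomorphism $\exp$, so I expect that computation (and the careful identification $T_{\mathbf 1}T_1^N(\R^d)\cong T_0^N(\R^d)$ that makes it meaningful) to be the only real obstacle.
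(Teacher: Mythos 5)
Your proposal is correct and follows essentially the same route as the paper, which likewise assembles the statement from \cite[Propositions~7.17 and~7.19]{friz2010multidimensional} together with the global diffeomorphism $\exp\colon T_0^N(\R^d)\to T_1^N(\R^d)$, deducing connectedness and simple connectedness by transporting them from the vector space $T_0^N(\R^d)$. The only difference is that you verify explicitly (tangent-space identification and the commutator-bracket computation) what the paper delegates to the cited references, which is harmless added detail rather than a different argument.
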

	We now introduce the notion of path signatures. Recall that a path $x: [0,1] \to \R^d$ is said to be of bounded variation if
	\begin{equation*}
		\sup_{p\in D([0,1])} \sum_{i\in p} |x_{i+1}-x_i|_{\R^d} <\infty
	\end{equation*}
	where $D([0,1])$ denotes the set of all finite partitions of the interval $[0,1]$. We denote by $BV([0,1],\R^d)$ the set of all continuous paths of bounded variation $[0,1] \to \R^d$ and define the $N$-step signature of a path $x \in BV([0,1],\R^d)$ as follows.
	\begin{definition}
		Let $x: [0,1] \to \mathbb{R}^d, x(t) = (x_t^1, \ldots, x_t^d)$ be a continuous path of bounded variation. We introduce the notation
		\begin{equation*}
			\int_{0 < t_1 < \dots < t_k < 1} dx_{t_1} \otimes \dots \otimes dx_{t_k} \coloneqq \sum_{i_1, \dots, i_k} \left( \int_{0 < t_1 < \dots < t_k < 1} dx_{t_1}^{i_1} \dots dx_{t_k}^{i_k} \right) (e_{i_1} \otimes \dots \otimes e_{i_k})
			\in (\mathbb{R}^d)^{\otimes k} \;.
		\end{equation*}
		Then, the N-step signature of the path $x$ is defined as the sequence
		\begin{equation*}
			S_N(x)_{0,1} \coloneqq\left(1, \int_{1<u<1} d x_u, \ldots, \int_{0<u_1<\ldots<u_k<1} d x_{u_1} \otimes \ldots \otimes d x_{u_k}\right) \in  T_1^N(\R^d) \;.
		\end{equation*}
	\end{definition}
	We denote by $G_N(\R^d)$ the set of all signatures of continuous paths of bounded variation taking values in $\R^d$, i.e.
	\begin{equation}\label{eq: group of signatures definition}
		G_N(\mathbb{R}^d) := \left\{ S_N(x)_{0,1} : x \in BV\left( [0, 1], \mathbb{R}^d \right) \right\} \;.
	\end{equation}
	It is known that $G_N(\mathbb{R}^d)$ is a Lie subgroup of $T_1^N\left(\mathbb{R}^d\right)$; see \cite[Proposition~2.25]{lyons2007roughpaths}.
	Its Lie algebra is the free $N$-step nilpotent Lie algebra over $\R^d$, which we denote by $\fancyg_N(\R^d)$, realized as a subalgbegra of $ T_0^N\left(\mathbb{R}^d\right)$. Further, the exponential map $\exp: \fancyg_N(\R^d) \to G_N(\mathbb{R}^d)$ is a bijection, given by the the exponential map defined in (\ref{eq: exponential and logarithm}), restricted to $\fancyg_N(\R^d) \subset T_0^N\left(\mathbb{R}^d\right)$; see \cite[Theorem~7.30]{friz2010multidimensional}. Note that since $\exp$ is a homeomorphism and $\fancyg_N(\R^d)$ is a finite-dimensional vector space, it follows that $G_N(\mathbb{R}^d)$ is connected and simply connected. We summarize these observations in the following theorem.
	\begin{theorem}\label{thm: GN is group with algebra gn}
		Let $G_N(\mathbb{R}^d)$ be the group of signatures defined (\ref{eq: group of signatures definition}) and define
		\begin{equation*}
			\fancyg_N(\mathbb{R}^d) = \mathbb{R}^d \oplus [\mathbb{R}^d, \mathbb{R}^d] \oplus \cdots \oplus 
			\underbrace{[\mathbb{R}^d, [\ldots, [\mathbb{R}^d, \mathbb{R}^d]]]}_{\text{(N-1) brackets}} \subset T_0^N\left(\mathbb{R}^d\right) \;.
		\end{equation*}
		Then, $G_N(\mathbb{R}^d)$ is a connected, simply connected nilpotent Lie group with Lie algebra $\fancyg_N(\mathbb{R}^d)$ and $\exp(\fancyg_N(\mathbb{R}^d)) = G_N(\mathbb{R}^d)$.
	\end{theorem}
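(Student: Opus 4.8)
The plan is to assemble the statement from the structural facts recalled just above, checking that the pieces fit. The backbone is the preceding lemma: $T_1^N(\R^d)$ is a connected, simply connected nilpotent Lie group with Lie algebra $T_0^N(\R^d)$, and by \cite[Proposition~2.25]{lyons2007roughpaths} the set $G_N(\R^d)$ of signatures is a Lie subgroup of $T_1^N(\R^d)$. A Lie subalgebra of the nilpotent Lie algebra $T_0^N(\R^d)$ is again nilpotent (all iterated brackets of length exceeding $N$ vanish by the grading), so nilpotency is automatic once the Lie algebra is identified. Hence the only genuine work is (i) to identify the Lie algebra of $G_N(\R^d)$ with $\fancyg_N(\R^d)$ as written, and (ii) to deduce connectedness and simple connectedness.

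For (i) I would argue both inclusions. For $\fancyg_N(\R^d)\subseteq\operatorname{Lie}(G_N(\R^d))$: the straight-line path $t\mapsto tv$ has signature $\exp(v)$ for every $v\in\R^d=(\R^d)^{\otimes 1}$, so $\{\exp(tv):t\in\R\}\subseteq G_N(\R^d)$ is a one-parameter subgroup and $v\in\operatorname{Lie}(G_N(\R^d))$; since $\operatorname{Lie}(G_N(\R^d))$ is a Lie subalgebra of $T_0^N(\R^d)$, it contains the Lie subalgebra generated by $\R^d$, which is exactly $\fancyg_N(\R^d)$. For the reverse inclusion one invokes the classical fact that the log-signature of a bounded-variation path is a free Lie element, i.e. $\log S_N(x)_{0,1}\in\fancyg_N(\R^d)$, equivalently $G_N(\R^d)\subseteq\exp(\fancyg_N(\R^d))$; differentiating curves in $G_N(\R^d)$ through the identity then forces $\operatorname{Lie}(G_N(\R^d))\subseteq\fancyg_N(\R^d)$. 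More economically, one may cite \cite[Theorem~7.30]{friz2010multidimensional} directly for the identity $\exp(\fancyg_N(\R^d))=G_N(\R^d)$ together with the fact, noted above, that $\exp\colon T_0^N(\R^d)\to T_1^N(\R^d)$ is a global diffeomorphism: then $G_N(\R^d)=\exp(\fancyg_N(\R^d))$ is an embedded submanifold whose tangent space at the identity is the linear subspace $\fancyg_N(\R^d)$, which is therefore its Lie algebra.

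For (ii), since $\exp$ is a global diffeomorphism and $\fancyg_N(\R^d)$ is a linear subspace, the restriction $\exp|_{\fancyg_N(\R^d)}\colon\fancyg_N(\R^d)\to G_N(\R^d)$ is a diffeomorphism onto the submanifold $G_N(\R^d)$. As $\fancyg_N(\R^d)$ is a finite-dimensional real vector space, it is connected and simply connected, and these properties transfer along the homeomorphism to $G_N(\R^d)$. Collecting nilpotency, the identification of the Lie algebra, the surjectivity of $\exp$, and the topological properties yields the theorem.

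I expect the only delicate point to be pinning down $\operatorname{Lie}(G_N(\R^d))=\fancyg_N(\R^d)$ — that the signatures generate precisely the free step-$N$ nilpotent Lie algebra and nothing larger — which ultimately rests on log-signatures being free Lie elements (or on quoting \cite[Theorem~7.30]{friz2010multidimensional}); everything else is bookkeeping with the cited results.
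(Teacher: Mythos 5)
Your proposal is correct and follows essentially the same route as the paper, which likewise assembles the statement from \cite[Proposition~2.25]{lyons2007roughpaths} (Lie subgroup), \cite[Theorem~7.30]{friz2010multidimensional} ($\exp(\fancyg_N(\R^d)) = G_N(\R^d)$), and the observation that $\exp$ being a homeomorphism from the finite-dimensional vector space $\fancyg_N(\R^d)$ gives connectedness and simple connectedness. Your additional two-inclusion argument for $\operatorname{Lie}(G_N(\R^d))=\fancyg_N(\R^d)$ via straight-line paths and log-signatures being free Lie elements is a sound elaboration of what the paper simply cites.
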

	
	\begin{remark}
		In what follows, we will often abbreviate $G_N(\mathbb{R}^d)$ and $\mathfrak{g}_N(\mathbb{R}^d)$ by $G_N$ and $\mathfrak{g}_N$, respectively, whenever the dimension $d$ of the underlying space $\mathbb{R}^d$ is clear from the context or irrelevant. Moreover, to reduce notational clutter, we will write the product of two elements $x, y \in G_N$ simply as $xy$ or $x \cdot y$, rather than $x \otimes y$.
	\end{remark}
	\subsection{Generalities on Nilpotent Lie Groups}
	In the previous section, we established that the group of signatures $G_N$ is a connected, simply connected nilpotent Lie group. In this section, we recall some classical results about this type of groups. For simplicity, we state the results specifically for $G_N$, although we mention that they hold more generally for any connected, simply connected nilpotent Lie group; see, for instance, \cite{corwin1990representations}. We begin with the well-known Baker-Campbell-Hausdorff (BCH) formula.
	\begin{theorem}{(Baker-Campbell-Hausdorff formula)}\label{thm: cbh}
		Let $\exp$ be the exponential map defined in (\ref{eq: exponential and logarithm}). Then, for any $X,Y \in \mathfrak g_N(\R^d)$ we have that
		\begin{equation*}
			\log \left(\exp X \cdot \exp Y \right) = X+Y+\frac{1}{2}[X,Y]+ (\textit{linear combination of commutators in } X \text{ and } Y)
		\end{equation*}
	\end{theorem}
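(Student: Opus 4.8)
The plan is to carry out the computation once and for all in the free associative algebra on two noncommuting letters, and then specialise. Write $A=\mathbb{Q}\langle\langle \mathbf{x},\mathbf{y}\rangle\rangle$ for the algebra of formal power series in two noncommuting variables, graded by total degree. Since the augmentation ideal $T_0^N(\R^d)$ is nilpotent, with $\bigl(T_0^N(\R^d)\bigr)^{\otimes(N+1)}=0$, the assignment $\mathbf{x}\mapsto X$, $\mathbf{y}\mapsto Y$ extends to a well-defined algebra homomorphism $\varphi\colon A\to T^N(\R^d)$ (every monomial of degree $>N$ maps to $0$). Under $\varphi$ the formal series $e^{\mathbf{x}}=\sum_k\mathbf{x}^k/k!$ and $e^{\mathbf{y}}$ are sent exactly to the truncated exponentials $\exp X$, $\exp Y$ of (\ref{eq: exponential and logarithm}) (the tail terms vanish), and likewise $\log(e^{\mathbf{x}}e^{\mathbf{y}})$ is sent to $W\coloneqq\log(\exp X\cdot\exp Y)$. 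Thus it suffices to analyse $W(\mathbf{x},\mathbf{y})\coloneqq\log(e^{\mathbf{x}}e^{\mathbf{y}})\in A$. Expanding $e^{\mathbf{x}}e^{\mathbf{y}}=1+(\mathbf{x}+\mathbf{y})+\bigl(\tfrac12\mathbf{x}^2+\mathbf{x}\mathbf{y}+\tfrac12\mathbf{y}^2\bigr)+\cdots$ and $\log(1+z)=z-\tfrac12 z^2+\cdots$, and collecting the homogeneous parts $W=\sum_{n\ge1}W_n$, gives at once $W_1=\mathbf{x}+\mathbf{y}$ and
\[
W_2=\Bigl(\tfrac12\mathbf{x}^2+\mathbf{x}\mathbf{y}+\tfrac12\mathbf{y}^2\Bigr)-\tfrac12(\mathbf{x}+\mathbf{y})^2=\tfrac12(\mathbf{x}\mathbf{y}-\mathbf{y}\mathbf{x})=\tfrac12[\mathbf{x},\mathbf{y}].
\]

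The heart of the matter — and the step I expect to be the genuine obstacle — is to show that every homogeneous component $W_n$ is a \emph{Lie polynomial}, i.e.\ a linear combination of iterated brackets of $\mathbf{x}$ and $\mathbf{y}$. I would obtain this from Friedrichs' primitive-element criterion: endow $A$ with the coproduct $\Delta$ (valued in the completed tensor square $A\otimes A$) determined by $\Delta\mathbf{x}=\mathbf{x}\otimes1+1\otimes\mathbf{x}$, $\Delta\mathbf{y}=\mathbf{y}\otimes1+1\otimes\mathbf{y}$ and extended as a continuous algebra homomorphism; an element of the augmentation ideal of $A$ is a Lie series precisely when it is \emph{primitive}, $\Delta w=w\otimes1+1\otimes w$. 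Since $\mathbf{x},\mathbf{y}$ are primitive, $e^{\mathbf{x}}$ and $e^{\mathbf{y}}$ are grouplike ($\Delta g=g\otimes g$); a product of grouplike elements is grouplike; and the logarithm of a grouplike element is primitive. Hence $W$ is primitive, so each $W_n$ is a homogeneous Lie polynomial of degree $n$, with $W_1=\mathbf{x}+\mathbf{y}$ and $W_2=\tfrac12[\mathbf{x},\mathbf{y}]$ as just computed. (Alternatively one may quote the Dynkin–Specht–Wever theorem, or derive the integral formula $W=\mathbf{y}+\int_0^1\psi\bigl(e^{t\,\adjoint\mathbf{x}}e^{\adjoint\mathbf{y}}\bigr)(\mathbf{x})\,dt$ with $\psi(z)=z\log z/(z-1)$, which is visibly valued in the Lie subalgebra generated by $\mathbf{x}$ and $\mathbf{y}$.)

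Finally I would push the conclusion back down to $\mathfrak g_N(\R^d)$ through $\varphi$. Because $X,Y\in\mathfrak g_N(\R^d)$ and $\mathfrak g_N(\R^d)$ is a Lie subalgebra of $T_0^N(\R^d)$, each $\varphi(W_n)=W_n(X,Y)$ lies in $\mathfrak g_N(\R^d)$; and since $\mathfrak g_N(\R^d)$ is nilpotent of step $N$, every iterated bracket of length exceeding $N$ vanishes, so $\varphi(W_n)=0$ for $n>N$. Applying $\varphi$ to $W=\sum_{n\ge1}W_n$ therefore gives
\[
\log(\exp X\cdot\exp Y)=X+Y+\tfrac12[X,Y]+\sum_{n=3}^{N}W_n(X,Y),
\]
where each $W_n(X,Y)$ with $n\ge3$ is a linear combination of commutators (iterated brackets) in $X$ and $Y$; this is exactly the assertion of the theorem. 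As a by-product the right-hand side lies in $\mathfrak g_N(\R^d)$, so $\exp X\cdot\exp Y\in\exp(\mathfrak g_N(\R^d))=G_N(\R^d)$, consistent with $G_N(\R^d)$ being a group. Apart from the Lie-series property of $W$, every step is routine bookkeeping with the degree truncation.
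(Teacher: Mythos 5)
Your proof is correct. Note, however, that the paper does not actually prove Theorem \ref{thm: cbh}: its ``proof'' is a bare citation to Corwin--Greenleaf (Theorem 1.2.1 there), so your route is genuinely different in that you supply an argument. Your strategy---work once in the free complete algebra $\mathbb{Q}\langle\langle\mathbf{x},\mathbf{y}\rangle\rangle$, check $W_1=\mathbf{x}+\mathbf{y}$ and $W_2=\tfrac12[\mathbf{x},\mathbf{y}]$ by hand, get the Lie-series property of $\log(e^{\mathbf{x}}e^{\mathbf{y}})$ from Friedrichs' primitivity criterion, and then specialise through the evaluation homomorphism $\varphi$, using that products of more than $N$ elements of $T_0^N(\R^d)$ vanish---is the standard Hopf-algebraic proof of BCH, and the truncation step is exactly the right way to reconcile the formal series with the polynomial $\exp$ and $\log$ of (\ref{eq: exponential and logarithm}); it also yields the pleasant by-product that $\exp X\cdot\exp Y\in\exp(\mathfrak g_N(\R^d))$. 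The trade-off is transparency versus self-containedness: the paper's citation imports the entire theorem as a black box, while you import only the much smaller black box that primitive elements are Lie series (Friedrichs, or equivalently Dynkin--Specht--Wever); a reader wanting everything from scratch would still need that lemma, or your alternative of quoting Dynkin's explicit formula. Two minor points worth making explicit if you write this up: the evaluation $\varphi$ is well defined on the \emph{completed} free algebra precisely because every monomial of degree $>N$ dies in $T^N(\R^d)$ (as you say, $X$ and $Y$ need not be homogeneous, but they lie in the augmentation ideal, which is what matters), and $\varphi$ intertwines the formal $\log$ with the truncated $\log$ of the paper for the same reason.
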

	\begin{proof}
		See \cite[Theorem~1.2.1]{corwin1990representations}.
	\end{proof}
	The bijectivity of the exponential map allows us to identify $G_N$ with $\fancyg_N$ and, in particular, to transfer a coordinate system on the algebra to one on the group. Specifically, let $B = \{X_1, \ldots, X_n\}$ be a basis of $\fancyg_N$ and $x \in G_N$. Then, there is a unique $X \in \fancyg$ such that $x = \exp(X)$, and since $B$ is a basis there are $\alpha_1, \dots, \alpha_n \in \mathbb{R}$ such that $X = \sum_{i=1}^n\alpha_iX_i$. Hence,
	\begin{equation*}
		x = \exp(X) = \exp\left(\sum_{i=1}^n\alpha_iX_i\right)
	\end{equation*}
	so that we can identify $x$ with the vector $(\alpha_1, \ldots, \alpha_n) \in \mathbb{R}^n$. This coordinate system is often called \textit{canonical coordinates of the first kind} and can be used to construct a Haar measure on $G_N$ (see section \ref{sec: measures on nilpotent lie groups}). However, it is not the only way to transfer coordinates from $\fancyg_N$ to $G_N$. First, we need a definition.
	\begin{definition}{(Strong Malcev basis)}\label{def: strong malcev basis}
		Let $\fancyg$ be a nilpotent Lie algebra and $B = \{X_1, \ldots, X_m\}$ a basis for $\fancyg$. We say that $B$ is a \textit{strong Malcev} basis for $\fancyg$, if for all $n \in \{1, \ldots, m\}$:
		\begin{itemize}
			\item [i)] $\mathbb{R}$-$\operatorname{span}\{X_1, \ldots, X_n\}$ is a subalgebra of $\fancyg$.
			\item[ii)] $\mathbb{R}$-$\operatorname{span}\{X_1, \ldots, X_n\}$ is an ideal of $\fancyg$.
		\end{itemize}
		A basis that only satisfies condition $i)$ is called a weak Malcev basis for $\fancyg$. Note that by subalgebra we always mean a Lie subalgebra, i.e. a subalgebra closed under Lie bracket operations.
	\end{definition}
	\begin{example}
		Let 
		\[B = \{X_1^N, \ldots, X_{m_N}^N, X_1^{N-1}, \ldots, X_{m_{N-1}}^{N-1}, \ldots, X_1^1, \ldots, X_{m_1}^1\} \;,\]
		be a basis of $\fancyg_{N}$ such that the elements $X_1^k, \ldots, X_{m_k}^k$ span the $k$-th layer of $G_N$. Then, $B$ is a strong Malcev basis for $\fancyg_{N}$.
	\end{example}
	
	One of the most useful properties of strong Malcev bases is that they provide an alternative diffeomorphism between $\fancyg_N$ and $G_N$.
	
	\begin{proposition}\label{prop: bijectivity of gamma}
		Let $\{X_1, \ldots, X_m\}$ be a strong Malcev basis of $\fancyg_N$. Then, the map 
		\begin{equation*}
			\gamma: \fancyg \to G,\quad \gamma\left(\sum_{i=1}^m\alpha_iX_i\right) = \exp(\alpha_mX_m)\cdots\exp(\alpha_1X_1)
		\end{equation*}
		is a diffeomorphism. Moreover, if $\fancyh = \mathbb{R}$-$\operatorname{span}\{X_1, \ldots, X_k\}$ and $H = \exp(\fancyh)$, then 
		\begin{equation*}
			H = \gamma(\fancyh) = \exp(\mathbb{R}X_k)\cdots\exp(\mathbb{R}X_1).
		\end{equation*}
	\end{proposition}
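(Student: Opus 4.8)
The plan is to argue by induction on $m=\dim\fancyg_N$, peeling off the topmost basis vector $X_m$ at each stage; since the inductive hypothesis needs to be applied to a subgroup that is not itself a group of signatures, I would phrase and prove the statement for an arbitrary connected, simply connected nilpotent Lie group $G$ with Lie algebra $\fancyg$ (the case $G=G_N$ being the one recorded here). For $m=1$ the map $\gamma$ is just $\exp$ restricted to the line $\R X_1$, hence a diffeomorphism onto $G=\exp(\R X_1)$ by Theorem~\ref{thm: GN is group with algebra gn}. For the inductive step, put $\fancyh'=\R$-$\operatorname{span}\{X_1,\dots,X_{m-1}\}$, which by Definition~\ref{def: strong malcev basis}~ii) is a codimension-one ideal of $\fancyg$, so $H'\coloneqq\exp(\fancyh')$ is a closed, connected, simply connected \emph{normal} subgroup with Lie algebra $\fancyh'$, and $\{X_1,\dots,X_{m-1}\}$ is a strong Malcev basis of $\fancyh'$. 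By the inductive hypothesis, $\gamma'\colon\fancyh'\to H'$, $\gamma'(\sum_{i<m}\alpha_iX_i)=\exp(\alpha_{m-1}X_{m-1})\cdots\exp(\alpha_1X_1)$, is a diffeomorphism. Using the linear identification $\fancyg\cong\R X_m\oplus\fancyh'$, the map $\gamma$ factors as $\sum_i\alpha_iX_i\mapsto\exp(\alpha_mX_m)\,\gamma'(\sum_{i<m}\alpha_iX_i)$, so it suffices to prove that the map $\psi\colon\R\times H'\to G$, $\psi(s,h)=\exp(sX_m)\,h$, is a diffeomorphism.

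To prove this I would pass to the quotient. Since $H'$ is closed and normal, $G/H'$ is a one-dimensional, connected, simply connected nilpotent Lie group, hence isomorphic to $(\R,+)$, and the quotient map $q\colon G\to G/H'$ is a smooth submersion admitting smooth local sections. The image of $X_m$ in $\fancyg/\fancyh'$ is nonzero, so $c(s)\coloneqq q(\exp(sX_m))$ is a Lie group isomorphism $\R\to G/H'$. Given $g\in G$, there is a unique $s=s(g)$ with $c(s)=q(g)$, depending smoothly on $g$, and then $h\coloneqq\exp(s(g)X_m)^{-1}g$ satisfies $q(h)=e$, i.e. $h\in H'$; this gives surjectivity of $\psi$ together with the explicit smooth formula $\psi^{-1}(g)=(s(g),\exp(s(g)X_m)^{-1}g)$, while injectivity follows by applying $q$ to an identity $\exp(sX_m)h=\exp(s'X_m)h'$ and using injectivity of $c$. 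Hence $\psi$, and therefore $\gamma=\psi\circ(\mathrm{id}_{\R}\times\gamma')$ composed with the above linear isomorphism, is a diffeomorphism.

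For the \emph{moreover} statement, $\{X_1,\dots,X_k\}$ is a strong Malcev basis of the subalgebra $\fancyh=\R$-$\operatorname{span}\{X_1,\dots,X_k\}$, and $H=\exp(\fancyh)$ is the corresponding closed, connected, simply connected subgroup; applying the part already proved to $H$ shows that $\gamma$ restricted to $\fancyh$ (i.e. with $\alpha_{k+1}=\dots=\alpha_m=0$) is a diffeomorphism of $\fancyh$ onto $H$, which is exactly $H=\gamma(\fancyh)=\exp(\R X_k)\cdots\exp(\R X_1)$. (Alternatively, since $\gamma$ is already a global diffeomorphism, $\gamma(\fancyh)$ is an embedded $k$-dimensional submanifold, closed in $G$, contained in the connected $k$-dimensional subgroup $H$, hence clopen in $H$ and therefore equal to $H$.)

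I expect the real obstacle to be the verification that $\psi$ is a diffeomorphism---concretely, setting up $G/H'$ as a Lie group isomorphic to $\R$ with smooth quotient map, and upgrading the evident continuous inverse of $\psi$ to a smooth one via a smooth section through $\exp(\,\cdot\,X_m)$; the remaining steps are bookkeeping. A more computational alternative avoids quotients altogether: using Theorem~\ref{thm: cbh} together with the fact that a strong Malcev basis satisfies $[\fancyg,\fancyh_j]\subseteq\fancyh_{j-1}$ for $\fancyh_j=\R$-$\operatorname{span}\{X_1,\dots,X_j\}$ (because $\operatorname{ad}X$ acts nilpotently, hence trivially, on the one-dimensional space $\fancyh_j/\fancyh_{j-1}$), the Baker--Campbell--Hausdorff formula shows that, in canonical coordinates of the first kind, the $X_i$-coordinate of $\log\gamma(\sum_j\alpha_jX_j)$ equals $\alpha_i$ plus a polynomial in $\alpha_{i+1},\dots,\alpha_m$ only. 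Thus $\gamma$ is a polynomial map with unipotent triangular Jacobian, so $\det D\gamma\equiv1$, and its inverse is obtained by solving recursively from the index $m$ downward, hence is polynomial as well; either observation yields the diffeomorphism property directly, and the \emph{moreover} part follows as above.
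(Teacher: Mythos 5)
Your proposal is correct, but note that the paper does not prove this proposition at all: its ``proof'' is a citation to Corwin--Greenleaf, Proposition~1.2.7. What you supply is essentially the standard textbook argument, in two variants, and both are sound. The inductive quotient argument works because the strong Malcev hypothesis makes $\fancyh'=\R\text{-span}\{X_1,\dots,X_{m-1}\}$ an \emph{ideal}, so $G/H'$ exists as a Lie group; the only step you state without justification that genuinely needs a word is that $G/H'$ is simply connected (otherwise it could be the circle and $c$ would fail to be injective) --- this follows, e.g., from the homotopy exact sequence of the bundle $H'\to G\to G/H'$ together with $\pi_1(G)=0$ and $H'$ connected, or from the standard fact that quotients of simply connected nilpotent groups by closed connected normal subgroups are again simply connected. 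You also quietly use that $\exp(\fancyh')$ is a closed embedded subgroup with Lie algebra $\fancyh'$ and that its exponential is the restriction of $\exp_G$; this is standard for simply connected nilpotent groups and is assumed throughout the paper, so it is fair to use. Your second, BCH-based argument is the cleaner route here: the observation $[\fancyg,\fancyh_j]\subseteq\fancyh_{j-1}$ (nilpotent $\operatorname{ad}$ acting on a one-dimensional quotient of ideals) gives the unipotent triangular coordinate form of $\log\circ\gamma$, hence a polynomial inverse, and it avoids all quotient subtleties. One comparative remark worth keeping in mind: both of your arguments use the ideal property of a \emph{strong} Malcev basis in an essential way, whereas the cited Corwin--Greenleaf result holds already for weak Malcev bases (where neither the quotient nor the triangularity argument applies verbatim); since the paper only states the strong case, your proof fully covers the claim as stated.
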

	\begin{proof}
		See \cite[Proposition~1.2.7]{corwin1990representations}.
	\end{proof}
	
	We conclude this section by defining the so-called coadjoint action of $G_N$ on the dual of its Lie algebra $\fancyg_N^*$, which will be crucial in Kirillov's orbit method discussed in Section \ref{sec: kirillov theory}.
	\begin{definition}\label{def: adj and coadj action}
    Let $\exp$ be the exponential function on $\fancyg_N$, so that, by Theorem \ref{thm: GN is group with algebra gn}$, G_N = \exp(\fancyg_N)$. Then, we define the adjoint action of $G_N$ on $\fancyg_N$ by
		\begin{align*}
			\Adjoint : G_N\times\fancyg_N \to \fancyg_N, \quad
			(\exp X, Y) \mapsto (\Adjoint \operatorname{exp}X)Y = \sum_{k=0}^{\infty}\frac{1}{k!}(\adjoint X)^k Y \;,
		\end{align*}
		where $(\adjoint X) Y = [X,Y]$.\\
		Let $\fancyg_N^*$ be the dual of $\fancyg_N$, i.e. the set of continuous linear functionals $\mathfrak{g}_N \to \mathbb{R}$. Then, the coadjoint action of $G_N$ on $\fancyg_N^*$ is defined by $\coadjoint: G_N\times\fancyg_N^* \to \fancyg_N^*,$ $(\exp X, \ell) \mapsto (\coadjoint \exp X)\ell$, where $(\coadjoint \exp X)\ell$ is defined as
		\begin{align*}
			((\coadjoint \operatorname{exp}X)\ell)(Y) = \ell(\Adjoint \operatorname{exp}-X)Y) = \ell\left(\sum_{k=0}^{\infty}\frac{1}{k!}(\adjoint -X)^k Y\right) \quad \text{for all } Y \in \fancyg
		\end{align*}
	\end{definition}
	\section{Harmonic Analysis on the Group of Signatures}\label{sec: Harmonic analysis}
	\subsection{Measures on nilpotent Lie groups}\label{sec: measures on nilpotent lie groups}
	In this section we present some measure-theoretic facts on the group of signatures. First, we describe two known constructions of the Haar measure. Further, we show how to obtain a $G_N$-invariant measure on the homogeneous space $G_N/H$ when $H$ is a closed subgroup of $G_N$. The existence of such a measure is essential for the construction of induced representations presented in the next section. We note that, although we state the results in this section for the group of signatures, they also hold if one substitutes $G_N$ with any connected, simply connected nilpotent Lie group.
	\begin{theorem}\label{thm: construction of haar measure}
		Let $dX$ be the standard Lebesgue measure on $\fancyg_N$. Then, the push-forward measure $dx = \exp_{*}dX$ of $dX$ by $\exp$ is a left and right Haar measure on $G_N=\exp(\mathfrak g_N)$.
	\end{theorem}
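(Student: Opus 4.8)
The plan is to transport everything to the Lie algebra via the (diffeomorphic) exponential map and reduce the statement to a Jacobian computation. Writing $X*Y\coloneqq\log(\exp X\cdot\exp Y)$ for the Baker--Campbell--Hausdorff product (Theorem~\ref{thm: cbh}), which by nilpotency is a \emph{polynomial}, associative operation with unit $0$ and inverse $-X$, I would first note that for $a=\exp A$ and $f\in C_c(G_N)$ one has, by definition of the push-forward and since $\exp A\cdot\exp X=\exp(A*X)$,
$\int_{G_N}f(ax)\,dx=\int_{\fancyg_N}f(\exp(A*X))\,dX$ and $\int_{G_N}f(xa)\,dx=\int_{\fancyg_N}f(\exp(X*A))\,dX$. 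Hence it suffices to show that the diffeomorphisms $\phi_A\colon X\mapsto A*X$ and $\psi_A\colon X\mapsto X*A$ of $\fancyg_N$ (with inverses $\phi_{-A}$ and $\psi_{-A}$, using associativity of $*$) are volume-preserving, i.e.\ $|\det D\phi_A(X)|=|\det D\psi_A(X)|=1$ for all $A,X$; the change-of-variables formula then delivers left- \emph{and} right-invariance of $dx$ at once, while nonzeroness and Radon-ness are inherited from Lebesgue measure because $\exp$ is a homeomorphism.

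For the Jacobians I would use the grading $\fancyg_N=V_1\oplus\cdots\oplus V_N$ from Theorem~\ref{thm: GN is group with algebra gn} ($V_k$ the $k$-th layer, $[V_i,V_j]\subseteq V_{i+j}$, $V_l=0$ for $l>N$) and fix a basis of $\fancyg_N$ obtained by concatenating bases of $V_N,V_{N-1},\dots,V_1$ in that order. By BCH, $A*X=A+X+\sum_\alpha c_\alpha[\cdots]_\alpha$ with each $[\cdots]_\alpha$ an iterated Lie bracket of degree $\ge 2$ in $A$ and $X$; since every iterated bracket of degree $\ge 2$ in the single variable $A$ vanishes, each such term carries at least one factor $X$. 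Differentiating $\phi_A$ at $X$ in a direction $V\in V_k$, the term $A$ contributes $0$, the term $X$ contributes $V$, and each remaining term contributes (by Leibniz) a sum of iterated brackets each containing exactly one factor $V$ and at least one factor $A\in V_{\ge 1}$, hence lying in $\bigoplus_{l>k}V_l$. Thus $D\phi_A(X)V-V\in\bigoplus_{l>k}V_l$ whenever $V\in V_k$, so in the chosen basis $D\phi_A(X)$ is triangular with all diagonal entries equal to $1$ and $\det D\phi_A(X)=1$; the identical argument applied to $X\mapsto X*A$ gives $\det D\psi_A(X)=1$.

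Combining these with the change of variables $Y=\phi_A(X)$ (resp.\ $Y=\psi_A(X)$) and $|\det D\phi_{-A}|=|\det D\psi_{-A}|=1$ yields $\int_{G_N}f(ax)\,dx=\int_{G_N}f(xa)\,dx=\int_{G_N}f(x)\,dx$ for all $a\in G_N$ and $f\in C_c(G_N)$. Therefore $dx=\exp_*dX$ is a nonzero bi-invariant Radon measure, i.e.\ a left and right Haar measure on $G_N$ (in particular $G_N$ is unimodular, consistent with $\det\bigl(\Adjoint(\exp X)\bigr)=e^{\operatorname{tr}(\adjoint X)}=1$ as $\adjoint X$ is nilpotent).

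I expect the only genuine work to be the grading bookkeeping in the Jacobian step: its entire content is that differentiating the BCH product in its second argument perturbs the identity by an operator that strictly raises the grading, hence a nilpotent operator, hence one of determinant $1$. A less self-contained alternative would be to invoke the standard differential-of-exponential formula to identify $D\phi_A(X)$, up to a left translation, with $\sum_{k\ge 0}\tfrac{(-\adjoint X)^k}{(k+1)!}$, a unipotent operator of determinant $1$ because $\adjoint X$ is nilpotent; or simply to cite \cite[Theorem~1.2.10]{corwin1990representations}.
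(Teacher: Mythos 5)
Your argument is correct: the unipotent-Jacobian computation is sound (every BCH correction term contains at least one factor of $A$ and at least one of $X$, so differentiating in $X$ perturbs the identity by an operator that strictly raises the grading), and the change-of-variables step then gives bi-invariance of $\exp_*dX$. The paper itself offers no inline proof but simply cites \cite[Theorem~1.2.10]{corwin1990representations} (or Friz--Victoir), and your argument is essentially the standard proof given in that reference, so this is the same approach, just written out in full.
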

	\begin{proof}
		See \cite[Proposition~16.40]{friz2010multidimensional} or \cite[Theorem~1.2.10]{corwin1990representations}.
	\end{proof}
	Using the push forward of the exponential map on $\fancyg_N$ is not the only way to obtain a Haar measure on $G_N$.  Using the map $\gamma$ defined in Proposition \ref{prop: bijectivity of gamma} provides an alternative construction of a left and right Haar measure on $G_N$. Interestingly, the Haar measure obtained in this way coincides with the one from the previous theorem.
	\begin{theorem}\label{thm: alternative haar measure}
        Let $\lambda$ be the standard Lebesgue measure on $\fancyg_N$ and $\gamma: \fancyg_N \to G_N$ the map defined in Proposition \ref{prop: bijectivity of gamma}, i.e.
        $$\gamma: \fancyg_N \to G_N,\quad \gamma\left(\sum_{i=1}^m\alpha_iX_i\right) = \exp(\alpha_mX_m)\cdots\exp(\alpha_1X_1)$$
        Then, $\gamma_{*}(\lambda) =  \exp_{*}(\lambda)$. In other words, $\gamma_{*}(\lambda)$ is a left and right Haar measure on $G_N$ that coincides with the one constructed in Theorem \ref{thm: construction of haar measure}.
    \end{theorem}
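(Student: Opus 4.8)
The plan is to compare the two maps through the transition diffeomorphism $\Psi := \log \circ \gamma \colon \fancyg_N \to \fancyg_N$. Since $\exp$ is a bijection with inverse $\log$ and $\gamma$ is a diffeomorphism (Proposition~\ref{prop: bijectivity of gamma}), $\Psi$ is a diffeomorphism of $\fancyg_N$ with $\gamma = \exp \circ \Psi$, so $\gamma_{*}(\lambda) = \exp_{*}(\Psi_{*}\lambda)$. Hence it suffices to prove that $\Psi$ preserves the Lebesgue measure, i.e. that $|\det D_X \Psi| = 1$ for every $X \in \fancyg_N$; then $\Psi_{*}\lambda = \lambda$ and $\gamma_{*}(\lambda) = \exp_{*}(\lambda)$, which is a left and right Haar measure by Theorem~\ref{thm: construction of haar measure}.

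To establish $|\det D\Psi| \equiv 1$ I work in the linear coordinates attached to the strong Malcev basis $\{X_1, \dots, X_m\}$: writing $\gamma\!\left(\sum_i \alpha_i X_i\right) = \exp\!\left(\sum_j \Psi_j(\alpha) X_j\right)$ defines the components $\Psi = (\Psi_1, \dots, \Psi_m)$. Put $\fancyg^{(k)} := \R\text{-span}\{X_1, \dots, X_k\}$; by the strong Malcev property each $\fancyg^{(k)}$ is an ideal, and since a one-dimensional ideal of a nilpotent Lie algebra is central (apply this to $\fancyg^{(k)}/\fancyg^{(k-1)} \subset \fancyg_N/\fancyg^{(k-1)}$, using that every $\operatorname{ad}$-operator is nilpotent), one obtains the filtration relation $[\fancyg_N, \fancyg^{(k)}] \subseteq \fancyg^{(k-1)}$. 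Now fix $j$ and split $\gamma(\alpha) = a \cdot b$ with $a := \exp(\alpha_m X_m)\cdots\exp(\alpha_{j+1} X_{j+1})$ and $b := \exp(\alpha_j X_j)\cdots\exp(\alpha_1 X_1) \in \exp(\fancyg^{(j)})$. Using the Baker–Campbell–Hausdorff formula (Theorem~\ref{thm: cbh}) together with the relations $[\fancyg_N, \fancyg^{(k)}] \subseteq \fancyg^{(k-1)}$, one checks that $\log b \in \alpha_j X_j + \fancyg^{(j-1)}$, and then that every commutator term occurring in $\log(ab) = \log a + \log b + \tfrac12[\log a, \log b] + \cdots$ lies in $\fancyg^{(j-1)}$, hence contributes nothing to the $X_j$-component. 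Therefore
\begin{equation*}
	\Psi_j(\alpha) = \alpha_j + Q_j(\alpha_{j+1}, \dots, \alpha_m),
\end{equation*}
where $Q_j$ is the $X_j$-component of $\log a$ and depends only on $\alpha_{j+1}, \dots, \alpha_m$. Consequently $D_\alpha \Psi$ is upper triangular with all diagonal entries equal to $1$, so $\det D_\alpha \Psi \equiv 1$, which by the first paragraph concludes the proof.

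I expect the only delicate point to be the bookkeeping in the BCH expansion: one must verify that not just the leading bracket $[\log a, \log b]$ but every iterated commutator correction drops into $\fancyg^{(j-1)}$ and hence leaves the $X_j$-coordinate untouched --- this is where the filtration $[\fancyg_N, \fancyg^{(k)}] \subseteq \fancyg^{(k-1)}$ is invoked repeatedly, since once a bracket monomial lands in some $\fancyg^{(k)}$, bracketing again pushes it into $\fancyg^{(k-1)}$. A shorter but less self-contained alternative is to observe that $\gamma_{*}(\lambda)$ is itself a Haar measure on $G_N$ --- e.g. via the Malcev-coordinate description of Haar measure in \cite[Theorem~1.2.10]{corwin1990representations} --- so that $\gamma_{*}(\lambda) = c\,\exp_{*}(\lambda)$ for some $c > 0$ by uniqueness of Haar measure, and then to identify $c = 1$ by comparing the two densities at the identity, where $D_0\gamma = D_0\exp = \operatorname{Id}$; the Jacobian computation above has the advantage of simultaneously re-proving that $\gamma_{*}(\lambda)$ is Haar.
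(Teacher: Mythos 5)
Your proof is correct. The paper itself does not supply an argument for this statement---it simply cites \cite[Theorem~1.2.10 and Corollary~1.2.11]{corwin1990representations}---and your route (factoring $\gamma = \exp \circ \Psi$ with $\Psi = \log \circ \gamma$, using the strong Malcev filtration property $[\fancyg_N, \fancyg^{(k)}] \subseteq \fancyg^{(k-1)}$ to show via BCH that $\Psi_j(\alpha) = \alpha_j + Q_j(\alpha_{j+1},\ldots,\alpha_m)$, hence $\det D\Psi \equiv 1$ and $\Psi_*\lambda = \lambda$) is essentially the standard unipotent-Jacobian proof given in that reference, here written out self-containedly.
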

	\begin{proof}
		See \cite[Theorem~1.2.10 and Corollary 1.2.11]{corwin1990representations}.
	\end{proof}
	A concept that will be of crucial importance for the inducing construction, is that of a $G_N$-invariant measure on a homogeneous space of $G_N$.
	\begin{definition}
		Let $G$ be a locally compact group, $H \subset G$ a closed subgroup and consider the quotient space $G/H$. Then, we say that Radon measure $\mu$ on $G/H$ is $G$-invariant, if for any measurable set $E\subset G/H$ and $x \in G$ we have that $\mu(xE) = \mu(E)$.
	\end{definition}
	In particular, if $\mu$ is a $G$-invariant measure on $G/H$ and $f \in L^1(G/H)$, then 
	\begin{equation*}
		\int_{G/H} f(yx) d\mu(x) = \int_{G/H} f(x) d\mu(x) \;,
	\end{equation*}
	for any $y \in G$. The following result provides a construction of a $G_N$ invariant measure on any quotient space of $G_N$.
	\begin{theorem}\label{thm: construction of G-inv measure}
		Let $\{X_1, \ldots, X_n\}$ be a strong Malcev basis for $\fancyg_N$, $\fancyh = \R$-$\operatorname{span}\{X_1, \ldots, X_k\}$ and $H = \exp(\fancyh)$. We identify $\fancyg_N/\fancyh$ with $\R^{n-k}$ and define $\phi: \fancyg_N/\fancyh \to G_N/H$ as 
		$$\phi(\alpha_n, \ldots, \alpha_{k+1}) = \exp(\alpha_nX_n)\cdots\exp(\alpha_{k+1}X_{k+1}) \cdot H = \gamma(\alpha_n, \ldots, \alpha_{k+1}, 0, \ldots, 0) \cdot H\;,$$
		where $\gamma$ is the function from Proposition \ref{prop: bijectivity of gamma}. Then, $\phi$ is a diffeomorphism and if $\lambda$ is the Lebesgue measure on $\fancyg_N/\fancyh = \R^{n-k}$ then $\phi_*(\lambda)$ is a $G$-invariant measure on $G_N/H$.
	\end{theorem}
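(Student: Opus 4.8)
The plan is to prove the statement in two steps: first that $\phi$ is a diffeomorphism, and then that $\phi_*(\lambda)$ is $G_N$-invariant. Throughout, write $n=\dim\fancyg_N$, let $p\colon G_N\to G_N/H$ be the canonical projection (a surjective submersion, since $H$ is a closed subgroup), write $\sigma$ for the map $(\alpha_n,\dots,\alpha_{k+1})\mapsto\exp(\alpha_nX_n)\cdots\exp(\alpha_{k+1}X_{k+1})$ so that $\phi=p\circ\sigma$, and let $\gamma_H$ be the restriction of $\gamma$ to $\fancyh=\R\text{-span}\{X_1,\dots,X_k\}$; by Proposition \ref{prop: bijectivity of gamma} applied to $\fancyh$ and $H$, the map $\gamma_H$ is a diffeomorphism onto $H$.

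For the first step, the key elementary identity is the factorization $\gamma\bigl(\sum_{i=1}^n\alpha_iX_i\bigr)=\sigma(\alpha_n,\dots,\alpha_{k+1})\,\gamma_H\bigl(\sum_{i=1}^k\alpha_iX_i\bigr)$, which is immediate from the definition of $\gamma$ because the omitted factors $\exp(0\cdot X_j)$ are trivial. Since $\gamma$ and $\gamma_H$ are bijections (onto $G_N$ and $H$ respectively), this identity says that every $g\in G_N$ has a \emph{unique} decomposition $g=\sigma(\alpha)h$ with $\alpha\in\R^{n-k}$ and $h\in H$; hence $gH\mapsto\alpha$ is a well-defined two-sided inverse of $\phi$, so $\phi$ is a bijection. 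It is smooth, being $p$ composed with the smooth map $\sigma$. For the inverse: the map $G_N\to\R^{n-k}$ sending $g$ to its $\alpha$-component is the composition of $\gamma^{-1}$ with a coordinate projection, hence smooth, and it is right-$H$-invariant by the uniqueness of the decomposition; since $p$ is a surjective submersion it therefore descends to a smooth map $G_N/H\to\R^{n-k}$, which is $\phi^{-1}$. Thus $\phi$ is a diffeomorphism, and $\phi_*(\lambda)$ is a Radon measure as the pushforward of Lebesgue measure under a diffeomorphism.

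For the second step, I disintegrate the Haar measure of $G_N$ along $H$ (a Weil-type formula) using the same factorization. Note first that $\{X_1,\dots,X_k\}$ is a strong Malcev basis of $\fancyh$, since for $j\le k$ the span $\R\text{-span}\{X_1,\dots,X_j\}$ is already a subalgebra and an ideal in $\fancyg_N$. Hence, by Theorem \ref{thm: alternative haar measure} applied to the connected, simply connected nilpotent group $H$, the measure $dh:=(\gamma_H)_*(\lambda_\fancyh)$ is a (left and right) Haar measure on $H$, while $dg:=\gamma_*(\lambda_{\fancyg_N})$ is the Haar measure on $G_N$ of Theorems \ref{thm: construction of haar measure} and \ref{thm: alternative haar measure}. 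Under the coordinate splitting $\fancyg_N\cong\R^{k}\times\R^{n-k}$ the Lebesgue measure $\lambda_{\fancyg_N}$ is the product $\lambda_\fancyh\otimes\lambda$, so plugging the factorization identity into Fubini's theorem gives, for every $F\in C_c(G_N)$,
\[
\int_{G_N}F(g)\,dg=\int_{\R^{n-k}}\Bigl(\int_H F(\sigma(\alpha)h)\,dh\Bigr)d\lambda(\alpha)=\int_{G_N/H}\Bigl(\int_H F(gh)\,dh\Bigr)d(\phi_*\lambda)(gH),
\]
where in the last equality we used that, by left-invariance of $dh$, the inner integral depends on $g$ only through the coset $gH$, and that $\sigma(\alpha)H=\phi(\alpha)$.

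Finally, invariance of $\phi_*\lambda$ follows from this formula by a standard argument. For $y\in G_N$, apply the displayed identity to the left translate $F_y\colon g\mapsto F(y^{-1}g)$: the left-hand side is unchanged by left-invariance of $dg$, while $\int_H F_y(gh)\,dh=\overline F(y^{-1}gH)$ with $\overline F(gH):=\int_H F(gh)\,dh$; hence $\int_{G_N/H}\overline F(y^{-1}z)\,d(\phi_*\lambda)(z)=\int_{G_N/H}\overline F(z)\,d(\phi_*\lambda)(z)$ for all $F\in C_c(G_N)$. Because the averaging map $F\mapsto\overline F$ is a surjection $C_c(G_N)\to C_c(G_N/H)$ (see, e.g., \cite{corwin1990representations}), this yields $\int_{G_N/H}\psi(y^{-1}z)\,d(\phi_*\lambda)(z)=\int_{G_N/H}\psi(z)\,d(\phi_*\lambda)(z)$ for all $\psi\in C_c(G_N/H)$ and all $y\in G_N$, which is the asserted $G_N$-invariance of $\phi_*\lambda$. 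I expect the main subtlety to lie in the clean derivation of the Weil-type disintegration in the third step (matching the coordinate splitting, the factorization of $\gamma$, and the Haar measures on $G_N$ and $H$); the rest is routine bookkeeping with the strong Malcev coordinates together with the two standard facts just used — surjectivity of the averaging map and the universal property of the submersion $p$ — and the whole argument parallels the construction of the invariant measure in \cite{corwin1990representations}.
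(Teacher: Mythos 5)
Your proof is correct: the unique factorization $\gamma(\alpha,\beta)=\sigma(\alpha)\gamma_H(\beta)$ with $\{X_1,\dots,X_k\}$ a strong Malcev basis of $\fancyh$, the resulting Weil-type disintegration of $\gamma_*(\lambda_{\fancyg_N})$ over $(\gamma_H)_*(\lambda_\fancyh)$, and the translation argument via surjectivity of the averaging map all go through. The paper itself offers no argument beyond citing \cite[Theorem~1.2.12]{corwin1990representations}, and your write-up is essentially the standard proof of that cited result, so there is nothing further to flag.
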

	\begin{proof}
		See \cite[Theorem~1.2.12]{corwin1990representations}.
	\end{proof}
	\subsection{The inducing construction}\label{sec: inducing construction}
	Recall that a unitary representation $\pi$ of a topological group $G$ is a group homomorphism from $G$ into the group of unitary operators on some Hilbert space  $\mathcal{H}_{\pi}$, denoted by $U(\mathcal{H}_{\pi})$, which is continuous with respect to the strong operator topology. In other words, $\pi$ is a map $G \to U(\mathcal{H}_{\pi})$ such that for all $x,y \in G$, $\pi(xy) = \pi(x)\pi(y)$, $\pi(x^{-1}) = \pi(x)^{-1}$ and $x \mapsto \pi(x)v$ is continuous for all $v \in \mathcal{H}_{\pi}$. 

     Two unitary representations $\pi_1: G \to U(\mathcal{H}_{\pi_1}), \pi_2: G \to U(\mathcal{H}_{\pi_2})$ of $G$ are said to be \textit{equivalent}, written $\pi_1 \cong \pi_2$, if there is a unitary operator $U: \mathcal{H}_{\pi_1} \to \mathcal{H}_{\pi_2}$, such that for all $x \in G$
    \[\pi_2(x) = U\pi_1(x)U^{-1} \;.\]

    Further, a unitary representation $\pi: G \to U(\mathcal{H}_{\pi})$ of $G$ is called \textit{irreducible} if there is no $\pi$-invariant subspace of $\mathcal{H}_{\pi}$. That is, there is no subspace $V \subsetneq \mathcal{H}_{\pi}$ with $\pi(V) \subseteq V$.
    
	We now introduce the concept of induced representations, i.e. a representation of a group $G$ constructed from a representation of a subgroup $H \subseteq G$. For simplicitly, we only present this construction in the situation relevant for our purpose. We refer to \cite{folland2015course} for a more general and detailed introduction to induced representation.
	
	Let $\ell\in \fancyg_N^*$ and $\mathfrak{h}_\ell \subset \mathfrak{g}_N$ a subalgebra such that $\ell([X,Y]) = 0$ for all $X,Y \in \mathfrak{h}_{\ell}$. Then, by the Baker-Campbell-Hausdorff formula (Theorem \ref{thm: cbh}),
	\begin{equation*}
		\ell\left(\log(\operatorname{exp}X \cdot \operatorname{exp} Y)\right)  =  \ell(X) + \ell(Y)
	\end{equation*}
	for all $X,Y \in \mathfrak{h}_\ell$. Hence, for $H_\ell = \operatorname{exp}(\mathfrak{h}_\ell) \subset G_N$, 
	\[\chi_{\ell}: H_\ell \to \mathbb{C}, \; \chi(\operatorname{exp}X) = e^{i\ell(X)}\]
	is a one-dimensional representation of the subgroup $H_\ell$, since
	\begin{equation*}
		\sigma(\exp X \cdot \exp Y) = e^{i\ell\left(\log(\operatorname{exp}X\cdot \; \operatorname{exp} Y)\right)} = e^{i\ell(\operatorname{exp}X) + i\ell(\operatorname{exp}Y)} = \sigma(\exp X) \sigma(\exp Y) \;.
	\end{equation*}
	and the map is clearly continuous. Note that we interpret the complex number $e^{i\ell(X)}$ as a unitary operator $\C \to \C$, $z \mapsto e^{i\ell(X)}z$.
	We now define $\mathcal{F}_{\ell}$ to be the space containing all functions $f: G_N \to \mathbb{C}$ such that 
	\begin{align*}
		&i)\; f(\exp X \cdot \exp Y) = e^{i\ell(Y)}f(\exp X) \text{ for all } X \in \fancyg_N, Y \in \fancyh_{\ell} \\
		i&i) \;\int_{G_N/H_{\ell}} |f(x)|^2 d\mu(xH) < \infty
	\end{align*}
	where $\mu = \phi_*(\lambda)$ is the $G_N$-invariant measure from Theorem \ref{thm: construction of G-inv measure}. Note that, by definition, for any $f, g \in \mathcal{F}_{\ell}$, 
	$f(x)\overline{g(x)}$ depends only on the cosets in $G_N/H_{\ell}$, since
	\begin{equation*}
		f(xh)\overline{g(xh)} =  e^{i\ell(\log h)}f(x)e^{-i\ell(\log h)}\overline{g(x)} = f(x)\overline{g(x)}
	\end{equation*}
	for all $x \in G_N$ and $h \in H_{\ell}$. Hence, the integral in $ii)$ is indeed well defined and, more generally, the map 
	\begin{equation*}
		(f,g): G_N/H_{\ell} \to \mathbb{R}, \quad (f,g)(xH_{\ell}) =f(x)\overline{g(x)}
	\end{equation*}
	is a well defined function on $G_N/H_{\ell}$. With this observation the space $\mathcal{F}_{\ell}$ becomes a complete Hilbert space, by defining the following inner product on it \cite[Chapter~6]{folland2015course}
	\begin{equation*}
		\langle f, g\rangle=\int_{G_N/H_{\ell}} (f,g)(xH_{\ell})d \mu(x H_{\ell}) = \int_{G_N/H_{\ell}} f(x)\overline{g(x)}d \mu(xH_{\ell})\;.
	\end{equation*}
	Finally, the induced representation $\operatorname{ind}_{H_{\ell}}^{G_N}(\sigma)$ of $G_N$, induced by the character $\chi_l$ is the unitary representation defined by
	\begin{equation*}
		\operatorname{ind}_{H_\ell}^{G}(\chi_l): G_N \to U(\mathcal{F}_\ell), x \mapsto L_x
	\end{equation*}
	where $L_x$ is the left shift operator defined by $(L_xf)(y) = f(x^{-1}y)$. Note that this representation is indeed unitary, since by $G_N$ invariance of $\mu$ we have that 
	\begin{align*}
		\left\langle L_yf, L_yg\right\rangle = \int_{G_N/H_{\ell}} f(y^{-1}x)\overline{g(y^{-1}x)}d \mu(xH_{\ell}) =  \int_{G_N/H_{\ell}} f(x)\overline{g(x)}d \mu(xH_{\ell}) =\left\langle f, g\right\rangle
	\end{align*}
	for all $y \in G_N$ and $f,g \in \mathcal{F}_\ell$.
	The functions in $\mathcal{F}_\ell$ have the following interesting geometric interpretation. Using the function $\gamma$ from Proposition \ref{prop: bijectivity of gamma} as a coordinate map, any function in $\mathcal{F}_\ell$ satisfies
	\begin{equation*}
		f(\gamma(x,h)) = f(\gamma(x,0)\gamma(0,h)) = e^{i\ell(h)}f(\gamma(x,0))
	\end{equation*}
	for all $(x,h) \in \fancyg/\fancyh_\ell \times \fancyh_\ell$. Hence, we see that geometrically, $f$ is a complex-valued function, such that its norm only depends on the equivalence classes of $G_N/H_\ell$ or, using $\gamma$, on the elements of $\fancyg/\fancyh_\ell$, while the elements in $H_\ell = \gamma(\fancyh_\ell) = \exp(\fancyh_\ell)$ only determine the rotation of the function value. 
    
    In particular, any function $f \in \mathcal{F}_\ell$ defines a function $\tilde{f} \in L^2(G/H_{\ell})$ via $\tilde{f}(\gamma(x,0)H_\ell) \coloneqq f(\gamma(x,0))$ and, conversely, every function $\tilde{f} \in L^2(G/H_{\ell})$ can be extended to a function $f \in \mathcal{F}_\ell$ by defining $f(\gamma(x,h)) \coloneqq e^{i\ell(h)}f(\gamma(x,0))$.
	
	The reason for our interest in this type of induced representation is that, by Kirillov's orbits method, these representations exhaust, up to equivalence, all irreducible unitary representations of $G_N$. Hence, they provide a method to parametrize the unitary dual $\hat{G}_N$ using linear functionals in $\fancyg_N^*$.
	
	\subsection{Kirillov theory}\label{sec: kirillov theory}
	We present Kirillov theory in its full generality, as restricting to the special case of the signatures group does not simplify the statements in any way. Kirillov originally established the results presented in this section in \cite{kirillov1962unitary} and a more self-contained exposition also appears in his book \cite{kirillov2004lectures}. Our exposition mostly follows the one provided by Corwin and Greenleaf in \cite{corwin1990representations}. The proofs of the results presented here can be found in any of the mentioned sources.
	
	In the following, $G$ is always assumed to be a connected, simply connected, nilpotent Lie group with Lie algebra $\fancyg$. We begin with a definition.
	\begin{definition}
		Let $\ell \in \mathfrak{g}^*$. Then, we say that a sub-algebra $\mathfrak{h} \subseteq{\mathfrak{g}}$ is subordinate to $\ell$ if $\ell([X,Y]) = 0$ for all $X,Y \in \mathfrak{h}$.\\
		We say that $\mathfrak{h}$ is a maximal subordinate subalgebra for $\ell$, if $\mathfrak{h}$ is subordinate to $\ell$ and for any other subalgebra $\mathfrak{h}'$ subordinate to $\ell$, $\operatorname{dim}\fancyh ' \leq \operatorname{dim}\fancyh$. A maximal subordinate subalgebra is also called a \textit{polarization} for $\ell$. 
	\end{definition}
	Given a functional $\ell\in \fancyg^*$ and a (maximal) subordinate subalgebra $\fancym$ of $\fancyg$, we set $M = \operatorname{exp}\fancym$ and 
	\[\chi_{\ell,M}: M \to \mathbb{C}, \; \chi_{\ell,M}	(\operatorname{exp}X) = e^{i\ell(X)} \;.\]
	Then, we have seen in the previous section that $\chi_{\ell,M}$ defines a unitary representation of the subgroup $M \subseteq G$.
	The next theorem guarantees the existence of a maximal subordinate subalgebra for any $\ell \in \fancyg^*$. Note that while all polarizations of a functional $\ell$ have the same dimension, there might be in general more than one choice of polarization for $\ell$. However, the next theorem also shows that, up to equivalence, the choice of maximal subordinate subalgebra, makes no difference to the induced representation.
	\begin{theorem}\label{thm: existence of mss}
		Let $\ell\in \mathfrak{g}^*$ be a linear functional. Then, there is a maximal subordinate subalgebra $\fancym$ of $\ell$, and for $M = \exp(\fancym)$, $\operatorname{ind}_M^G(\chi_{\ell,M})$ is irreducible. Further, if $\fancym, \fancym'$ are two maximal subordinate subalgebras of $\ell$, then $\operatorname{ind}_M^G(\chi_{\ell,M}) \cong \operatorname{ind}_{M'}^G(\chi_{\ell,M'})$.
	\end{theorem}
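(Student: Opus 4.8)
I would prove the three assertions packaged here -- existence of a polarization, irreducibility of the induced representation, and independence (up to equivalence) of the choice of polarization -- in that order.

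\emph{Existence.} Here the plan is to use Vergne's explicit construction. Fix a flag of ideals $\{0\}=\fancyg_0\subset\fancyg_1\subset\dots\subset\fancyg_n=\fancyg$ with $\dim\fancyg_j=j$, obtained from a strong Malcev basis as in Definition~\ref{def: strong malcev basis}. Introduce the skew-symmetric bilinear form $B_\ell(X,Y)=\ell([X,Y])$ on $\fancyg$ and set
\[
\fancyh=\sum_{j=0}^n \operatorname{rad}\!\big(B_\ell|_{\fancyg_j}\big),\qquad \operatorname{rad}\!\big(B_\ell|_{\fancyg_j}\big)=\{X\in\fancyg_j:\ \ell([X,Y])=0\ \ \forall\,Y\in\fancyg_j\}.
\]
I would then check, using that each $\fancyg_j$ is an ideal, that $\fancyh$ is a subalgebra on which $B_\ell$ vanishes, i.e. $\fancyh$ is subordinate to $\ell$; and a dimension count carried out layer by layer along the flag gives $\dim\fancyh=\tfrac12\big(\dim\fancyg+\dim\fancyg_\ell\big)$, where $\fancyg_\ell=\operatorname{rad}(B_\ell)$ is the radical of $B_\ell$ on all of $\fancyg$. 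Since \emph{any} subalgebra subordinate to $\ell$ is isotropic for the form induced by $B_\ell$ on $\fancyg/\fancyg_\ell$, its dimension is at most $\tfrac12(\dim\fancyg+\dim\fancyg_\ell)$, so $\fancyh$ is a polarization. (This also shows all polarizations have this common dimension.)

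\emph{Irreducibility.} I would argue by induction on $\dim\fancyg$. The base case is $\fancyg$ abelian: then $\fancym=\fancyg$, $M=G$, and the representation is the one-dimensional character $\chi_{\ell,G}$, which is irreducible. For the inductive step choose a codimension-one ideal $\fancyg_0\subset\fancyg$ and distinguish two cases. If $\fancym\subseteq\fancyg_0$, then $\fancym$ is still a polarization for $\ell_0=\ell|_{\fancyg_0}$, the inductive hypothesis makes $\rho=\operatorname{ind}_M^{G_0}(\chi_{\ell,M})$ irreducible, and inducing in stages gives $\operatorname{ind}_M^G(\chi_{\ell,M})=\operatorname{ind}_{G_0}^G(\rho)$; I would then invoke Mackey's irreducibility criterion for a normal subgroup of codimension one, which applies precisely because the $G$-orbit of $\rho$ in $\widehat{G_0}$ is nontrivial in this case. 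If instead $\fancym\not\subseteq\fancyg_0$, then $\fancyg=\fancyg_0+\fancym$ and $\fancym_0=\fancym\cap\fancyg_0$ is a polarization for $\ell_0$ of codimension one in $\fancym$; restricting $\operatorname{ind}_M^G(\chi_{\ell,M})$ to $G_0$ one recovers $\operatorname{ind}_{M_0}^{G_0}(\chi_{\ell_0,M_0})$ (irreducible by induction), while the remaining one-parameter direction acts by an operator that, after this identification, is unitarily conjugate to a translation operator; a Stone--von Neumann / canonical-commutation-relations argument then shows there is no proper closed invariant subspace.

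\emph{Independence of the polarization.} Given two polarizations $\fancym,\fancym'$, the plan is to interpolate through a finite chain $\fancym=\fancym^{(0)},\fancym^{(1)},\dots,\fancym^{(r)}=\fancym'$ of polarizations in which consecutive ones are \emph{adjacent}, meaning $\fancym^{(i)}\cap\fancym^{(i+1)}$ has codimension one in each; such a chain can be produced with the help of the flag of ideals. For two adjacent polarizations one writes down an explicit intertwining operator -- essentially a one-variable Fourier transform in the direction by which the two differ -- and verifies that it is unitary and intertwines the two induced representations by a direct computation using the $G_N$-invariant measures of Theorem~\ref{thm: construction of G-inv measure} together with the Baker--Campbell--Hausdorff formula (Theorem~\ref{thm: cbh}). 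Composing these elementary intertwiners along the chain yields the desired equivalence.

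\emph{Main obstacle.} The heart of the matter is the irreducibility statement, and specifically the second case of the inductive step: making the Stone--von Neumann argument precise requires carefully identifying the restriction of the induced representation to the codimension-one ideal, tracking how the complementary one-parameter subgroup acts under that identification, and ruling out invariant subspaces. Setting up the dichotomy between the two cases (which is where the structure of $\fancyg_0$ relative to $\ell$ enters) is the second delicate point, and the construction of a chain of pairwise-adjacent polarizations for the independence statement is a third.
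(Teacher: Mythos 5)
The paper does not actually prove this statement: it is recalled from Kirillov's work, and the text explicitly defers all proofs of Section 3.3 to \cite{kirillov1962unitary}, \cite{kirillov2004lectures}, \cite{corwin1990representations}. So your proposal is measured against the classical argument rather than an internal one. Your outline does follow that classical route in broad strokes: existence via Vergne's construction (which the paper itself records as Theorem \ref{thm: vergne}), together with the correct maximality bound $\dim\fancym=\tfrac12(\dim\fancyg+\dim\fancyg_\ell)$; irreducibility by induction on $\dim\fancyg$ through a codimension-one ideal; and uniqueness via intertwining operators. The existence part is sound. The uniqueness part is plausible but rests on an unproven step — that any two polarizations can be joined by a chain of pairwise ``adjacent'' ones — whereas the standard treatment folds uniqueness into the same induction on dimension as irreducibility.

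The genuine gap is in the irreducibility induction: your case split is keyed to whether $\fancym\subseteq\fancyg_0$ for an arbitrarily chosen codimension-one ideal $\fancyg_0$, and that is the wrong invariant. In the branch $\fancym\not\subseteq\fancyg_0$ the claim that $\fancym\cap\fancyg_0$ is a polarization for $\ell_0=\ell|_{\fancyg_0}$ is false in general: in the Heisenberg algebra with $[X,Y]=Z$, $\ell=Z^*$, $\fancym=\operatorname{span}\{Y,Z\}$ and the ideal $\fancyg_0=\operatorname{span}\{X,Z\}$, one has $\fancym\cap\fancyg_0=\R Z$ of dimension $1$, while polarizations of $\ell_0$ on the abelian $\fancyg_0$ have dimension $2$; correspondingly $\pi_\ell|_{G_0}$ is reducible there, so the Stone--von Neumann step has nothing to act on. In the branch $\fancym\subseteq\fancyg_0$, the hypothesis of Mackey's criterion — that the $G$-orbit of $\rho$ in $\widehat{G_0}$ is nontrivial — is exactly what has to be proven; it is true, but the proof needs the dichotomy for the radical $\fancyg_\ell=\{X\in\fancyg:\ell([X,\fancyg])=0\}$ relative to $\fancyg_0$ (if $\fancyg_\ell\subseteq\fancyg_0$ a dimension count on coadjoint orbits shows $\rho$ cannot be $G$-fixed), and it also presupposes the full Kirillov correspondence for $G_0$ as part of the inductive hypothesis in order to identify $\rho^x$ with the coadjoint action — neither of which your sketch supplies. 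The correct case distinction is $\fancyg_\ell\subseteq\fancyg_0$ versus $\fancyg_\ell\not\subseteq\fancyg_0$ (equivalently, it is organized around the center and $\ker\ell$ as in Corwin--Greenleaf); one has $\fancym\subseteq\fancyg_0\Rightarrow\fancyg_\ell\subseteq\fancyg_0$, but not conversely, and the Heisenberg example above ($\fancym\not\subseteq\fancyg_0$ yet $\fancyg_\ell\subseteq\fancyg_0$) sits precisely in the overlap where your second branch breaks down. As written, the inductive step therefore fails for some admissible choices of $\fancyg_0$.
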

	\begin{remark}
		Since we are only interested in the equivalence classes of unitary representations, by Theorem \ref{thm: existence of mss} we can simplify our notation and write $\pi_\ell$ instead of $\operatorname{ind}_M^G(\chi_{\ell,M})$ for the unitary irreducible representation induced by the functional $\ell \in \fancyg^*$.
	\end{remark}
	The following theorem shows that every irreducible unitary representation of a simply connected nilpotent Lie group is induced from a one-dimensional representation of some subgroup. Moreover, it characterizes the equivalence classes of irreducible unitary representations using the coadjoint action defined in Definition \ref{def: adj and coadj action}.
	\begin{theorem}\label{thm: eq of unireps with induced reps}
		Let $\pi$ be an irreducible unitary representation of $G$. Then, there is an $\ell \in g^*$ such that $\pi_\ell \cong \pi$. Further, if $\ell, \ell' \in \fancyg^*$, then $\pi_\ell \cong \pi_{\ell'}$ if and only if $\coadjoint (G)\ell=\coadjoint (G)\ell'$, i.e. if and only if $\ell$ and $\ell'$ belong to the same coadjoint orbit in $g^*$.
	\end{theorem}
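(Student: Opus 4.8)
I would prove the two assertions separately: \emph{exhaustion} --- every irreducible unitary representation $\pi$ of $G$ is equivalent to some $\pi_\ell$ --- and \emph{parametrization} ($\pi_\ell\cong\pi_{\ell'}$ if and only if $\ell$ and $\ell'$ lie in a common coadjoint orbit). I would take Theorem~\ref{thm: existence of mss} as given throughout, so that $\pi_\ell$ is a well-defined equivalence class. The easy half of the parametrization I would handle first: if $\ell'=(\coadjoint\exp X)\ell$ and $\fancyh$ is a polarization for $\ell$, then $(\Adjoint\exp X)\fancyh$ is a polarization for $\ell'$, since subordination and dimension are invariant under $\Adjoint$; moreover every unitary representation of $G$ is equivalent to its composition with the inner automorphism $\operatorname{Int}(\exp X)$, the intertwiner being $\pi(\exp X)$, and the inducing construction transforms naturally under automorphisms, in the sense that $\operatorname{ind}_{H}^{G}(\sigma)\circ\alpha^{-1}\cong\operatorname{ind}_{\alpha(H)}^{G}(\sigma\circ\alpha^{-1})$. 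Combining these with $\alpha=\operatorname{Int}(\exp X)$ yields $\pi_\ell\cong\pi_\ell\circ\alpha^{-1}\cong\pi_{\ell'}$. (One uses here that $\Adjoint(\exp X)$ is unipotent, hence preserves the Haar and quotient measures of Theorems~\ref{thm: construction of haar measure} and~\ref{thm: construction of G-inv measure}.)

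For exhaustion I would induct on $\dim G$, the case $\dim G=1$ being immediate. In the inductive step, let $\pi$ act on $\mathcal H_\pi$; by Schur's lemma the center $Z=\exp\mathfrak z$ acts by a scalar character, i.e.\ $\pi(\exp W)=e^{i\lambda(W)}\mathrm{Id}$ for some $\lambda\in\mathfrak z^*$. \textbf{Case (i): $\lambda$ vanishes on some line $\R W\subseteq\mathfrak z$} --- automatic when $\dim\mathfrak z\ge 2$, and also when $\lambda=0$. Then $\exp(\R W)\subseteq\ker\pi$, so $\pi$ descends to the simply connected nilpotent group $G/\exp(\R W)$ of strictly smaller dimension, to which the inductive hypothesis applies; pulling the resulting functional back to $\ell\in\fancyg^*$ with $\ell(W)=0$ and using that subalgebras of $\fancyg$ containing $\R W$ correspond to subalgebras of $\fancyg/\R W$ --- preserving subordination to $\ell$ and shifting dimension by one --- one obtains $\pi\cong\pi_\ell$, after also checking that the inducing construction is compatible with this quotient. \textbf{Case (ii): $\dim\mathfrak z=1$ and $\lambda\ne 0$.} Here I would invoke Kirillov's lemma: there are a codimension-one ideal $\fancyg_0\subset\fancyg$ and elements $X\notin\fancyg_0$, $Y\in\fancyg_0$ with $Y$ central in $\fancyg_0$ and $[X,Y]$ spanning $\mathfrak z$. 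Set $G_0=\exp\fancyg_0$, a normal subgroup with $G/G_0\cong\R$. Since $Y$ is central in $G_0$ whereas conjugation by $\exp(sX)$ shifts the $G_0$-character on $\exp(\R Y)$ by the nonzero amount governed by $[X,Y]$, the class of $G_0$ occurring in $\pi|_{G_0}$ is not fixed by $G$; Mackey's subgroup analysis for the codimension-one normal subgroup then yields an irreducible unitary representation $\rho$ of $G_0$ with $\pi\cong\operatorname{ind}_{G_0}^{G}\rho$. By induction $\rho\cong\pi_{\ell_0}$ with some polarization $\fancyh_0\subset\fancyg_0$; extending $\ell_0$ to any $\ell\in\fancyg^*$ --- so that $\chi_{\ell,H_0}=\chi_{\ell_0,H_0}$ on $H_0=\exp\fancyh_0$ --- and applying induction in stages, $\pi\cong\operatorname{ind}_{G_0}^{G}\operatorname{ind}_{H_0}^{G_0}\chi_{\ell_0,H_0}\cong\operatorname{ind}_{H_0}^{G}\chi_{\ell,H_0}$. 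A dimension count, made possible by the structure in Kirillov's lemma (which pins down the dimension of the radical of the bilinear form $(U,V)\mapsto\ell([U,V])$ on $\fancyg$), then shows that $\fancyh_0$ is again a polarization, this time for $\ell$ in $\fancyg$; hence $\pi\cong\pi_\ell$.

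For the hard half of the parametrization --- that $\pi_\ell\cong\pi_{\ell'}$ forces $(\coadjoint G)\ell=(\coadjoint G)\ell'$ --- I see two routes. The conceptual one uses Kirillov's character formula: the distribution character of $\pi_\ell$, transported to $\fancyg$ via $\exp$ and corrected by the (real-analytic, nowhere-vanishing) Jacobian of $\exp$, equals the Euclidean Fourier transform of the canonical $\coadjoint(G)$-invariant measure supported on the orbit $\mathcal O_\ell=(\coadjoint G)\ell$. Equivalent representations have equal characters, so the canonical measures on $\mathcal O_\ell$ and $\mathcal O_{\ell'}$ coincide by injectivity of the Fourier transform on tempered measures, and comparing supports --- coadjoint orbits of simply connected nilpotent groups being closed --- gives $\mathcal O_\ell=\mathcal O_{\ell'}$. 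The alternative route, closer to a self-contained argument, runs the two-case induction of the previous paragraph in reverse, checking at each reduction (quotient by a central line; $\operatorname{ind}$ from $G_0$) that equivalence of the induced representations can hold only when the associated coadjoint orbits agree, using the compatibility of coadjoint orbits of $G$ with those of $G/\exp(\R W)$ and with those of $G_0$.

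The step I expect to be the main obstacle is Case (ii) of the exhaustion argument: establishing Kirillov's lemma, extracting $\pi\cong\operatorname{ind}_{G_0}^{G}\rho$ from the little-group method for a codimension-one normal subgroup, and then reconciling this --- through induction in stages --- with the inducing-from-a-polarization construction, including the dimension count showing that the inherited subalgebra polarizes the extended functional. The converse half of the parametrization is likewise delicate once one foregoes the character formula. Since all of this is classical, for the purposes of the paper it is enough to cite \cite[Ch.~2]{corwin1990representations}, \cite{kirillov1962unitary}, and~\cite{kirillov2004lectures}; the outline above records only the architecture of the proof.
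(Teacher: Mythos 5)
Your proposal is correct and matches the paper's handling of this statement: the paper gives no proof of its own but defers to the classical sources (Kirillov \cite{kirillov1962unitary}, \cite{kirillov2004lectures} and Corwin--Greenleaf \cite{corwin1990representations}), exactly as you conclude. Your outline (central character via Schur, reduction modulo a central line, Kirillov's lemma plus the Mackey analysis for the codimension-one ideal with induction in stages, and the orbit-injectivity argument) is a faithful sketch of the argument in those references, so nothing further is needed here.
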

	\begin{remark}\label{rem: characterization of dual}
		Theorem \ref{thm: eq of unireps with induced reps} can be summarized as follows.
		Let $\hat{G}$ be the unitary dual of $G$, i.e. the set of equivalence classes of all irreducible unitary representations of $G$. Then, $\hat{G}$ is in one-to-one correspondence with the set of coadjoint orbits $O = \{\mathcal{O}_\ell \;|\; \ell \in \fancyg^*\}$, where $\mathcal{O}_\ell = \coadjoint (G)\ell$. Hence, if we define the equivalence relation $\ell \cong \ell' :\iff \mathcal{O}_\ell = \mathcal{O}_{\ell'}$, then by Kirillov's theory we know that
		\[\hat{G} = \{[\ell] \;|\; \ell \in \fancyg^*\} \;.\]
	\end{remark}
	
	Since the computation of coadjoint orbits will play a crucial role in establishing our results, the next two examples are intended to illustrate the concept and serve as simplified versions of the more general computations presented in the next sections.
	\begin{example}\label{ex: explicit orbits of g2}
		Let $\{X_1, \ldots, X_d\}$ be an orthonormal basis of $\mathbb{R}^d$ and  set $X_{ij} = [X_i,X_j]$. Then, $\{X_{ij}\}_{1\leq i<j\leq d}$ is a basis of $[\mathbb{R}^d, \mathbb{R}^d]$ so that $\{X_1, \ldots, X_d\} \cup \{X_{ij}\}_{1\leq i<j\leq d}$ is a basis of $\fancyg_2$. Let  $\{\ell_1, \ldots, \ell_{d}\}$ and $\{\ell_{ij}\}_{1\leq i<j\leq d}$ be the corresponding dual bases.\\
		Let $\ell = \sum_i \alpha_i \ell_i + \sum_{i,j} \beta_{i,j} \ell_{i,j} \in \fancyg_2^*$ and take an arbitrary element $X = \sum_{i} g_{i}X_i +  \sum_{i,j} g_{i,j} X_{i,j} \in \fancyg_2$, where $\alpha_i, \beta_{i,j}, g_{i},g_{i,j} \in \mathbb{R}$ for all $i,j$. Then, by definition of the adjoint action, we have that
		\begin{align*}
			&\Adjoint(\operatorname{exp}-X)X_i =X_i+\sum_{j=1}^d g_j[X_i, X_j] = X_i + \sum_{j=1}^d g_j X_{i,j}\\ 
			& \Adjoint(\operatorname{exp}-X)X_{i,j} = X_{i,j} \;.
		\end{align*}
		To compute the orbit of $\ell$ we need to check how the functional  $(\coadjoint \exp X)\ell$ operates on the basis elements of $\fancyg_2$. By the definition of coadjoint action and the previous equations we have that
		\begin{align*}
			&((\coadjoint \exp X)\ell)(X_i) = \ell(\Adjoint(\exp -X)X_i) = \ell\left(X_i + \sum_{j=1}^d g_j X_{i,j}\right) = \alpha_i + \sum_{j=1}^d g_j\beta_{i,j} \\
			&((\coadjoint \exp X)\ell)(X_{i,j}) = \ell(\Adjoint(\exp -X)X_{i,j}) = \ell(X_{i,j}) = \beta_{i,j}
		\end{align*}
		Therefore, writing $(\coadjoint \exp X)\ell$ using the dual basis of $\fancyg_2^*$ we obtain that the orbit of $\ell$ is given by
		\begin{align*}
			\mathcal{O}_{\ell} = (\coadjoint G) \ell &= \left\{\sum_i(\alpha_i+\sum_jg_{j}\beta_{i,j})\ell_i+\sum_{i,j}\beta_{i,j}\ell_{i,j} \;:\; (g_{1}, \ldots, g_d) \in \mathbb{R}^d\right\}
		\end{align*}
		From this formula we see that a functional $\tilde{\ell} = \sum_i \tilde{\alpha}_i {\ell}_i + \sum_{i,j} \tilde{\beta}_{i,j} {\ell}_{i,j} \in {\fancyg}_2^*$ is in the same orbit as $\ell$ if and only if, $\tilde{\beta}_{i,j}= \beta_{i,j}$ for all $i,j = 1, \ldots, d$ {and} there are $g_1, \ldots, g_d \in \R$ such that
		\begin{align}\label{eq: condition for orbit in g2}
			\tilde{\alpha}_i = \alpha_i + \sum_{j = 1}^dg_j \beta_{i,j} \quad \forall i= 1, \ldots, d 
		\end{align}
		Note that if we define the matrix $B \in \mathbb{R}^{d\times d}$ by $B_{i,j} = \beta_{i,j}$, and $\alpha = (\alpha_1, \ldots, \alpha_d)$, $\tilde{\alpha} = (\tilde{\alpha}_1, \ldots, \tilde{\alpha}_d)$, then (\ref{eq: condition for orbit in g2}) can be rewritten as
		\begin{align}
			B\cdot g = \tilde{\alpha}-\alpha \quad \text{for some } g\in \mathbb{R}^d \;. \label{eq: conditions N2}
		\end{align}
		In particular, if the matrix $B$ is invertible, condition ($\ref{eq: conditions N2}$) is satisfied for any $\alpha, \tilde{\alpha} \in \mathbb{R}^d$. Hence, in this case, the orbit of $\ell$ is a $d$-dimensional submanifold of $\fancyg_2^*$ and it is simply given by 
		\begin{equation*}
			(\coadjoint G) \ell = \sum_{i,j} \beta_{i,j} \ell_{i,j} + \left\{\sum_{i=1}^d \alpha_i \ell_i \; | \; \alpha_1, \ldots, \alpha_d \in \mathbb{R}\right\} = \sum_{i,j} \beta_{i,j} \ell_{i,j} + \R\text{-span }\{\ell_1, \ldots, \ell_d\} \;.
		\end{equation*}
	\end{example}
	\begin{example}\label{ex: explicit orbit of g3}
		We now consider the group $G_3 = G_3(\R^d)$.\\
		Let $\{X_1^1, \ldots, X_d^1\}$, $\{X_1^2, \ldots, X_{m_2}^2\}$, $\{X_1^3, \ldots, X_{m_3}^3\}$ be bases of $\R^d$, $\rdbracket^2$ and $\rdbracket^3$ respectively. 
		Let $\ell = \sum_{i,k} \alpha_i^k\ell_i^k \in \fancyg_3^*$ and take an arbitrary $X = \sum_{i,k} g_i^kX_i^k$, then
		\begin{align*}
			\Adjoint(\exp -X)X_i^1 &= X_i^1 + [X_i^1, X] +\frac{1}{2}[X,[X_i^1,X]]\\
			&= X_i^1 + \sum_{j=1}^{d} g_j^1[X_i^1,X_j^1] + \sum_{j=1}^{m_2} g_j^2[X_i^1,X_j^2] + \frac{1}{2} \sum_{j,s = 1}^d g_j^1g_s^1[X_j^1[X_i^1,X_s^1]]\\
			\Adjoint(\exp -X)X_i^2 &= X_i^2 + [X_i^2, X] = X_i^2 + \sum_{j=1}^{d} g_j^1[X_i^2,X_j^1] \\
			\Adjoint(\exp -X)X_i^3 &= X_i^3 \;.
		\end{align*}
		Hence, using the dual basis as in the previous example, we obtain that the orbit of $\ell$ is given by
		\begin{align*}
			\mathcal{O}_{\ell}= &\Bigg\{\sum_{i=1}^d \left(\ell(X_i^1) + \ell([X_i^1, X]) +\frac{1}{2}\ell([X,[X_i^1,X]])\right)\ell_i^1\\
			&+\sum_{i=1}^{m_2} \left(\ell(X_i^2) + \ell([X_i^2, X])\right)\ell_i^2 + \sum_{i=1}^{m_3} \alpha_i^3\ell_i^3\; \Big| \; X \in \fancyg_3\Bigg\}\\
			= &\Bigg\{\sum_{i=1}^d \left(\alpha_i^1 + \sum_{j=1}^{d} g_j^1\ell([X_i^1,X_j^1]) + \sum_{j=1}^{m_2} g_j^2\ell([X_i^1,X_j^2]) + \frac{1}{2} \sum_{j,s = 1}^d g_j^1g_s^1\ell([X_j^1[X_i^1,X_s^1]]) \right)\ell_i^1\\
			&+ \sum_{i=1}^{m_2}\left(\alpha_i^2 + \sum_{j=1}^{d} g_j^1\ell([X_i^2,X_j^1])\right)\ell_i^2 + \sum_{i=1}^{m_3} \alpha_i^3\ell_i^3 \;\Big| \; (g_1^1, g_2^1, \ldots, g_{m_2}^2) \in \R^{d+m_2}\Bigg\} \;.
		\end{align*}
		Again, we see that a necessary condition for some $\tilde{\ell} = \sum_{i,k} \tilde{\alpha}_i^k \ell_i^k \in \fancyg_3^*$ to be in the same orbit as $\ell$ is that they coincide on $[\R^3]^3$. Further, there must be some $g = (g_1^1, \ldots, g_d^1) \in \mathbb{R}^d$ such that for all $i = 1, \ldots, m_2$
		\begin{equation}\label{eq: necessary condition orbit in g_3}
			\tilde{\alpha}_i^2 = \alpha_i^2 +  \sum_{j=1}^{d} g_j^1\ell([X_i^2,X_j^1])
		\end{equation}
		As before, setting $\alpha = (\alpha_1, \ldots, \alpha_{m_2})$, $\tilde{\alpha} = (\tilde{\alpha_1}, \ldots, \tilde{\alpha}_{m_2})$ and $B \in \R^{m_2 \times d}$ with $B_{i,j} = \ell([X_i^2, X_j^1])$ we can rewrite (\ref{eq: necessary condition orbit in g_3}) as
		\[B \cdot g = \tilde{\alpha} -\alpha \quad \text{for some } g \in \R^d \;.\]
		Note that this is again only a necessary, but not a sufficient condition for $\tilde{\ell}$ to be in $\mathcal{O}_{\ell}$.
	\end{example}
	
	\section{A Fourier inversion formula for the group of Signatures}\label{chapter: A Fourier inversion formula for the group of Signatures}
    In this section, we present the main contribution of our work. The first part is dedicated to orbits in general position and, specifically, to the proof of Theorem \ref{thm: characterization of gen orbits}. In the second part, we use this result to obtain an explicit description of both the Fourier transform and its inversion on $G_N$.
	\subsection{Orbits in general position}\label{sec: orbits in general position}
	Orbits in general position play a crucial role in the Fourier inversion formula, as they are the only ones that need to be considered to reconstruct a function from its transform. The main result of this section, Theorem $\ref{thm: characterization of gen orbits}$, provides necessary and sufficient conditions to characterize the general orbits of $G_N$. As a consequence, we also obtain an explicit construction of a polarization for any $\ell \in \fancyg_N^*$ in general position; see Corollary \ref{cor: explicit polarization}.
	
	Let $\{X_1, \ldots, X_m\}$ be a strong Malcev basis of $\fancyg_N$. Then, the dual space $\fancyg_N^*$ is spanned, as a vector space, by the corresponding dual basis $\{\ell_1, \ldots, \ell_m\}$. For any $\ell \in \fancyg_N^*$ the coadjoint orbit $\mathcal{O}_\ell = (\coadjoint G_N)\ell$ of $\ell$ is a closed submanifold of $\fancyg_N^*$, so that, in particular, it has a well defined dimension. For any $j = 1, \ldots,m$ we set
	\begin{equation*}
		\fancyg_N^*(j)=\mathbb{R}\text{-span}\{\ell_{j+1}, \ldots, \ell_m\}
	\end{equation*}
	so that $\fancyg_N^*/\fancyg_N^*(j) = \mathbb{R}$-span$\{\ell_1, \ldots, \ell_j\}$. We define the dimension of $\mathcal{O}_\ell$ in $\fancyg^*/\fancyg^*(j)$ as the dimension of $\mathcal{O}_\ell/\fancyg^*(j)\subseteq \fancyg_N^*/\fancyg_N^*(j)$.
	\begin{remark}
		Let $\{X_1^N, \ldots, X_{m_N}^N, X_1^{N-1}, \ldots, X_{m_{N-1}}^{N-1}, \ldots, X_1^1, \ldots, X_{m_1}^1\}$ such that for all $k \in \{1, \ldots, N\}$ the elements $X_1^k, \ldots, X_{m_k}^k$ are a basis of the $k$-th layer of $\fancyg_N$. Then, for any $k = 1, \ldots, N$ and $m = 1, \ldots, m_k$ we have
        \[\fancyg_N^*/\fancyg_{N}^*(k, m) = \R\text{-span}\{l_1^N, \ldots, l_m^k\} \;.\]
	\end{remark}
	\begin{definition}{(Orbit in general position)}
		Let $\{X_1, \ldots, X_m\}$ be a strong Malcev basis of $\fancyg_N$ and $\{\ell_1, \ldots, \ell_m\}$ the corresponding dual basis. Let $\ell \in \fancyg_N^*$ and $\mathcal{O}_\ell$ its coadjoint orbit. 
		\begin{itemize}[leftmargin=2em]
			\item[i)]  The orbit $\mathcal{O}_\ell$ has maximal dimension in $\fancyg_N^*/\fancyg_N^*(j)$ if for any other $\ell' \in \fancyg_N^*$ we have $\operatorname{dim}(\mathcal{O}_{\ell'}/\fancyg_N^*(j)) \leq \operatorname{dim}(\mathcal{O}_\ell/\fancyg_N^*(j))$.\\
			\item[ii)] 	The orbit $\mathcal{O}_\ell$ is in \textit{general position} or is a \textit{generic orbit}, if it is an orbit of maximal dimension in $\fancyg_N^*/\fancyg_N^*(j)$ for all $j \in \{1,\ldots, m\}$. Similarly, we will say that $\ell$ is in general position if its orbit $\mathcal{O}_\ell$ is.
		\end{itemize}
	\end{definition}
	\begin{example}\label{ex: general orbits of g2 pt1}
		As in Example \ref{ex: explicit orbits of g2} we consider $\fancyg_2(\R^d)$, with strong Malcev basis $\{X_{1,2}, X_{1,3}, \ldots, X_{d-1,d},X_1, \ldots, X_d\}$ . There, we have seen that the orbit of some $\ell = \sum_i \alpha_i \ell_i + \sum_{i,j} \beta_{i,j} \ell_{i,j} \in \fancyg_2^*$ is given by
		\begin{align*}
			\mathcal{O}_\ell &= \left\{\sum_{i=1}^d(\alpha_i+\sum_{j=1}^dg_{j}\beta_{i,j})\ell_i+\sum_{1\leq i<j\leq d}\beta_{i,j}\ell_{i,j} \;:\; (g_{1}, \ldots, g_d) \in \mathbb{R}^d\right\} \;.
		\end{align*}
		Clearly, the dimension of $\mathcal{O}_\ell$ is smaller or equal $d$. For any $1 \leq m<n\leq d$ we have that $\fancyg_2^*/\fancyg_{2}^*((m, n)) = \R$-span$\{l_{1,2}, \ldots,l_{m,n}\}$ so that
		\begin{align*}
			\dim\left(\mathcal{O}_\ell/\fancyg_{2}^*((m, n))\right)= \dim\Big\{\sum_{\substack{i\leq m,\\j\leq n}}\beta_{i,j}\ell_{i,j}\Big\} = 0 \;. \;
		\end{align*}
		Further, for any $m = 1, \ldots, d$, we have $\fancyg_2^*/\fancyg_{2}^*(m) = \R$-span$\{l_{1,2}, \ldots,l_{d-1,d}, l_1, \ldots, l_m\}$, so that
		\begin{align*}
			\dim\left(\mathcal{O}_\ell/\fancyg_{2}^*(m)\right)&= \dim\left\{\sum_{i=1}^m(\alpha_i+\sum_{j=1}^dg_{j}\beta_{i,j})\ell_i+\sum_{i,j}\beta_{i,j}\ell_{i,j} \;:\; (g_{1}, \ldots, g_d) \in \mathbb{R}^d\right\}\\
			&=\dim\left\{\sum_{i=1}^m\left(\sum_{j=1}^dg_{j}\beta_{i,j}\right)\ell_i \;:\; (g_{1}, \ldots, g_d) \in \mathbb{R}^d\right\}\\
			&=\operatorname{rank} B_\ell(m) \;,
		\end{align*}
		where $B_\ell(m) \in \R^{m\times d}$ is defined by $(B_\ell(m))_{i,j} = \beta_{i,j} = \ell([X_i,X_j])$. By definition, an orbit is in general position if the dimension of $\mathcal{O}_\ell / \mathfrak{g}_2^*(m)$ is maximal for all $m = 1, \ldots, d$. From the previous computation, this condition is equivalent to requiring that the rank of the matrix $B_\ell(m)$ is maximal for each $m$. In particular, if there exists an element $\ell$ such that $B_\ell(d)$ is invertible, then
		\[\dim\left(\mathcal{O}_\ell/\fancyg_{2}^*(m)\right) = m \quad
		\text{for all} \quad m = 1, \ldots, d\]
		and any other $\tilde{\ell} \in \mathfrak{g}_2^*$ is in general position if and only if $B_{\tilde{\ell}}(d)$ is invertible. However, at this point, it is not clear whether such an $\ell$ always exists or, more generally, what the maximal attainable rank of the matrix $B_\ell(d)$ is. The next proposition addresses this problem for arbitrary dimensions.
	\end{example}
	\begin{proposition}\label{prop: existence of invertible matrix}
		Let $\{X_1^N, \ldots, X_{m_N}^N, \ldots,X_1^1, \ldots, X_{m_1}^1\}$ be a strong Malcev basis for $\fancyg_N$ such that for all $k \in \{1, \ldots, N\}$ the elements $X_1^k, \ldots, X_{m_k}^k$ are a basis of the $k$-th layer of $\fancyg_N$. For $k \in \{1,\ldots, \lfloor N/2 \rfloor\}$ define the matrix $B_{\ell}^k \in \mathbb{R}^{m_k\times m_k}$ by
		\[(B_{\ell}^k)_{i,j} = \ell([X_i^k,X_j^{N-k}]) \quad i,j \in \{1, \ldots, m_k\} \;.\]
		Then, there is an $\ell \in \fancyg_N^*$ such that:
		\begin{itemize}
			\item[i)] If $N/2 \notin \mathbb{N}$, or $N/2 \in \mathbb{N}$ with $m_{N/2} \in 2\mathbb{N}$, then $B_{\ell}^k$ is invertible for all $k \leq N/2$.
			\item[ii)] If $N/2 \in  \mathbb{N}$ with $m_{N/2} \in 2\mathbb{N}+1$, then $B_{\ell}^k$ is invertible for all $k<N/2$ and $B_{\ell}^{N/2}$ has rank $m_{N/2}-1$, where the first $m_{N/2}-1$ rows of $B_{\ell}^{N/2}$ are linearly independent.
		\end{itemize}
	\end{proposition}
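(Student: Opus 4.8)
The plan is to reduce the proposition to the statement that finitely many explicit polynomials on the dual $(\rdbracket^N)^*$ of the top layer of $\fancyg_N$ do not vanish identically, and then to glue the resulting generic conditions together using density. The starting observation is that every bracket $[X_i^k,X_j^{N-k}]$ lies in the top layer $\rdbracket^N$ of $\fancyg_N$, so each entry of $B_\ell^k$ equals $\ell$ evaluated at a fixed element of $\rdbracket^N$; hence $B_\ell^k$ depends only on $\ell|_{\rdbracket^N}$ and is linear in it. Consequently $\ell\mapsto\det B_\ell^k$, and also the top-left $(m_{N/2}-1)\times(m_{N/2}-1)$ principal minor of $B_\ell^{N/2}$, are polynomial functions on $(\rdbracket^N)^*\cong\R^{m_N}$. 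Define $P_k:=\det B_\ell^k$ when $2k\neq N$, or when $2k=N$ with $m_{N/2}$ even, and let $P_{N/2}$ be the above $(m_{N/2}-1)$-minor when $2k=N$ with $m_{N/2}$ odd. If each $P_k$ is shown to be $\not\equiv0$, the non-vanishing loci $U_k$ are non-empty Zariski-open (hence dense) subsets of the irreducible space $(\rdbracket^N)^*$, so $\bigcap_{k=1}^{\lfloor N/2\rfloor}U_k$ is non-empty; any $\ell\in\fancyg_N^*$ restricting into this intersection on $\rdbracket^N$ then works. Here one uses that for $2k=N$ the matrix $B_\ell^{N/2}$ is skew-symmetric, since $[X_i^{N/2},X_j^{N/2}]=-[X_j^{N/2},X_i^{N/2}]$: when $m_{N/2}$ is odd it is automatically of rank $\le m_{N/2}-1$, while invertibility of its top-left $(m_{N/2}-1)$-block forces the rank to be exactly $m_{N/2}-1$ with the first $m_{N/2}-1$ rows independent, as required in ii). So everything reduces to showing $P_k\not\equiv0$, which I would do in two cases.

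For $2k<N$ I would argue that the vanishing locus $Z_k=\{\ell:\det B_\ell^k=0\}$ is a proper subvariety. One checks that $B_\ell^k$ is singular precisely when $\ell$ annihilates $(\adjoint X)W$ for some nonzero $X\in\rdbracket^k$, where $W:=\R$-$\operatorname{span}\{X_1^{N-k},\dots,X_{m_k}^{N-k}\}$. The structural input is the classical fact that in a free Lie algebra $[X,Y]=0$ with $X,Y$ nonzero forces $X$ and $Y$ to be linearly dependent; since here $\deg X=k\neq N-k=\deg Y$, this means $\adjoint X$ is injective on $\rdbracket^{N-k}$ for every nonzero $X\in\rdbracket^k$, so $\dim(\adjoint X)W=\dim W=m_k$ for all such $X$. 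Hence $Z_k=\bigcup_{[X]\in\mathbb{P}(\rdbracket^k)}\big((\adjoint X)W\big)^{\perp}$ is the image of a closed incidence set over $\mathbb{P}(\rdbracket^k)$ all of whose fibers are linear of dimension $m_N-m_k$; its dimension is therefore at most $(m_k-1)+(m_N-m_k)=m_N-1<m_N$, so $Z_k\subsetneq(\rdbracket^N)^*$ and $P_k=\det B_\ell^k\not\equiv0$.

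For $2k=N$ I would construct $\ell$ by hand, using the embeddings $\rdbracket^j\subseteq(\Rd)^{\otimes j}$ and the standard inner product $\langle\cdot,\cdot\rangle$ on $(\Rd)^{\otimes N}$. Every $\ell\in(\rdbracket^N)^*$ is the restriction of $\langle T,\cdot\rangle$ for some $T\in(\Rd)^{\otimes N}$; taking $T=\sum_r U_r\otimes V_r$ with $U_r,V_r\in(\Rd)^{\otimes N/2}$, and using $[X,Y]=X\otimes Y-Y\otimes X$ together with multiplicativity of $\langle\cdot,\cdot\rangle$ over matching tensor degrees, gives $(B_\ell^{N/2})_{a,b}=\sum_r\big(u_r^*\wedge v_r^*\big)(X_a^{N/2},X_b^{N/2})$, with $u_r^*,v_r^*\in(\rdbracket^{N/2})^*$ the restrictions of $\langle U_r,\cdot\rangle,\langle V_r,\cdot\rangle$. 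Since functionals restrict surjectively from $(\Rd)^{\otimes N/2}$ onto $(\rdbracket^{N/2})^*$, one may prescribe the $u_r^*,v_r^*$ freely: if $m_{N/2}$ is even, take them to be the basis of $(\rdbracket^{N/2})^*$ dual to $X_1^{N/2},\dots,X_{m_{N/2}}^{N/2}$ grouped in consecutive pairs, so $B_\ell^{N/2}$ becomes the standard block-diagonal matrix with blocks $\left(\begin{smallmatrix}0&1\\-1&0\end{smallmatrix}\right)$, hence invertible; if $m_{N/2}$ is odd, use only the $m_{N/2}-1$ dual functionals of $X_1^{N/2},\dots,X_{m_{N/2}-1}^{N/2}$ and leave $X_{m_{N/2}}^{N/2}$ out, so $B_\ell^{N/2}$ has that symplectic block in its top-left $(m_{N/2}-1)$-corner and a vanishing last row and column. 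In either case the relevant $P_k$ is nonzero at this $\ell$, so $P_k\not\equiv0$.

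The step I expect to be the main obstacle is precisely this middle layer $2k=N$: making the explicit skew pairing land on the symplectic normal form, getting the even/odd dichotomy to match the statement, and—in the odd case—placing the unpaired basis vector last so that the "first $m_{N/2}-1$ rows" clause of ii) really follows. By contrast the case $2k<N$ is conceptually clean once triviality of centralizers in the free Lie algebra is invoked, and the density step that assembles the finitely many generic conditions into a single working $\ell$ is routine.
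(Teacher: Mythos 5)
Your proposal is correct, but it reaches the two key non-vanishing statements by a genuinely different route than the paper. Where you differ: for $2k<N$ the paper proves $\det B_{\ell}^k\not\equiv 0$ by an explicit induction on leading principal minors, successively assigning values to the entries $\ell([X_i^k,X_j^{N-k}])$ and reducing a bordered matrix by row operations; you instead identify the singular locus $Z_k=\{\ell:\det B_\ell^k=0\}$ with the union over $[X]\in\mathbb{P}(\rdbracket^k)$ of the annihilators of $(\adjoint X)W$, use the Shirshov--Witt/trivial-centralizer fact to get $\dim(\adjoint X)W=m_k$ for every nonzero $X$, and conclude by an incidence-variety dimension count that $Z_k$ is a proper (measure-zero) subset. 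Note that the structural input is the same in both arguments --- the paper's opening ``observation'' that $[X_i^k,\,\cdot\,]$ is injective on the span of the $X_j^{N-k}$ is exactly your centralizer fact, and it is also condition ii) of Corollary \ref{cor: generalization of results 1} --- so your argument is no less elementary in its hypotheses, and it replaces the somewhat delicate bookkeeping of the paper's induction (where the $\beta_j$ may depend on the $\alpha_i$) by a soft genericity argument; the price is that it is purely existential and exhibits no explicit $\ell$. For the middle layer $2k=N$ the roles are reversed: the paper again inducts on $2m\times 2m$ skew submatrices, while you write $\ell=\langle T,\cdot\rangle$ with $T$ a sum of decomposable tensors $U_r\otimes V_r$ whose factors restrict to paired dual basis functionals on $\rdbracket^{N/2}$, which puts $B_\ell^{N/2}$ directly in symplectic normal form (with a zero last row and column in the odd case), giving a concrete witness and making the even/odd dichotomy and the ``first $m_{N/2}-1$ rows'' clause immediate from skew-symmetry plus the nonzero leading $(m_{N/2}-1)$-minor. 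Your final gluing step --- each condition is the non-vanishing of a polynomial in $\ell|_{\rdbracket^N}$, and finitely many nontrivial polynomials on $\R^{m_N}$ have a common non-vanishing point --- is the same as the paper's concluding product-of-determinants argument, phrased in terms of dense (Zariski-open) sets. One small point to make explicit if you write this up: over $\R$ the dimension count should be closed off by noting that the image of the incidence bundle (a manifold of dimension $m_N-1$) under the projection has empty interior, e.g.\ by Sard or because it is semialgebraic of dimension at most $m_N-1$; with that remark the argument is complete.
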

	\begin{remark}
		The reason for the differentiation into the two cases above is that if $N$ is even, then the matrix $B_{\ell}^{N/2} \in \mathbb{R}^{m_{N/2}\times m_{N/2}}$ is skew symmetric. Hence, if $ m_{N/2}$ is odd the matrix can have rank at most  $m_{N/2}-1$.
	\end{remark}
	\begin{proof}
		Before we begin with the proof we make the following observation.
		Let $k \neq N/2$, $X_i^k \neq 0$ be any basis element of the $k$-th layer, and $X_{j_1}^{N-k}, \ldots,X_{j_n}^{N-k}$ be distinct basis elements of the layer ${N-k}$. Then, the elements $[X_i^k,X_{j_1}^{N-k}], \ldots, [X_i^k,X_{j_n}^{N-k}]$ are linearly independent. Since, assuming there are $\alpha_1,\ldots, \alpha_n$ such that
		\begin{align*}
			0 = \sum_{s = 1}^{n}\alpha_s[X_i^k,X_{j_s}^{N-k}] = \sum_{s = 1}^{n}[X_i^k,\alpha_s X_{j_s}^{N-k}] = [X_i^k,\sum_{s = 1}^{n}\alpha_s X_{j_s}^{N-k}] \;,
		\end{align*}
		then $\sum_{s = 1}^{n}\alpha_s X_{j_s}^{N-k} = 0$ and therefore $\alpha_{1}= \ldots = \alpha_n = 0$.\\
		
		\noindent Note that the determinant of $B_{\ell}^k$ is a polynomial of the variables $\ell([X_i^k,X_j^{N-k}])$, or in other words, of the variables $\ell([X_i^N])$, since $[X_i^k,X_j^{N-k}] \in (R^d)^{\otimes N}$.
		We begin by proving that for all $k \neq N/2$, $\det B_{\ell}^k \not\equiv 0$, which implies that for every $k$ there is an $\ell$ such that $\det B_{\ell}^k \neq 0$.\\
		For $1< m \leq m_k$ let $B_{\ell}^k(m,m)$ be the $m\times m$ submatrix of $B_{\ell}^k$ containing only the first $m$ rows and columns. We show by induction that for all such $m$ $\det B_{\ell}^k(m,m) \not\equiv 0$.\\
		Let $m=2$. Then, $B_{\ell}^k(2,2)$ has the form
		\[B_{\ell}^k(2,2)=
		\begin{pmatrix}
			\ell([X_1^k, X_1^{N-k}]) & \ell([X_1^k, X_2^{N-k}]) \\
			\ell([X_2^k, X_1^{N-k}]) & \ell([X_2^k, X_2^{N-k}])
		\end{pmatrix}
		\]
		The elements $[X_1^k, X_1^{N-k}]$ and $[X_2^k, X_1^{N-k}]$ are linearly independent, so that we can set $\ell([X_1^k, X_1^{N-k}]) = \alpha_{1}\neq 0$ and $\ell([X_2^k, X_1^{N-k}])=0$. The element $[X_2^k, X_2^{N-k}]$ is either a linear combination of $[X_1^k, X_1^{N-k}]$ and $[X_2^k, X_1^{N-k}]$, so that  $\ell([X_2^k, X_2^{N-k}])=\alpha_2 = r\cdot\alpha_{1}\neq 0$ for some $r \in \mathbb{R}$; or it is linearly independent of $[X_1^k, X_1^{N-k}]$ and $[X_2^k, X_1^{N-k}]$, so that we can simply set $\ell([X_2^k, X_2^{N-k}])=\alpha_2 \neq 0 $. In both cases we obtain
		\[B_{\ell}^k(2,2)=
		\begin{pmatrix}
			\alpha_1 & *\\
			0& \alpha_2
		\end{pmatrix}
		\]
		which is invertible whenever $\alpha_{1}\alpha_2 \neq 0$. In particular, this means that $\det B_{\ell}^k(2,2) \not\equiv 0$.\\
		Let now $m>2$. Then, $B_{\ell}^k(m,m)$ has the form
		\NiceMatrixOptions{cell-space-limits = 2pt}
		\[B_{\ell}^k(m,m) =
		\begin{pNiceArray}{ccc|c}
			\Block{3-3}<\large>{B_{\ell}^k(m-1,m-1)} &&& \ell([X_1^k, X_m^{N-k}])\\
			&&& \vdots\\
			&&& \ell([X_{m-1}^k, X_m^{N-k}])\\
			\hline 
			\ell([X_m^k, X_{1}^{N-k}]) & \cdots & \ell([X_m^k, X_{m-1}^{N-k}]) &\ell([X_m^k, X_{m}^{N-k}])
		\end{pNiceArray}
		\]
		We set $\alpha_i = [X_i^k, X_m^{N-k}]$ for $i\leq m$. Then, due to linear independence of the elements $[X_i^k, X_m^{N-k}]$, the vector $\alpha = (\alpha_1,\ldots,\alpha_m)$ can take any value in $\mathbb{R}^m$. Further we set $\beta_j = [X_m^k, X_j^{N-k}]$ for $j<m$ where we note that the $\beta_j$ may depend on $\alpha$. Hence, $B_{\ell}^k(m,m)$ takes the form
		\NiceMatrixOptions{cell-space-limits = 2pt}
		\[B_{\ell}^k(m,m) =
		\begin{pNiceArray}{cW{c}{1cm}c|c}
			\Block{3-3}{B_{\ell}^k(m-1,m-1)} &&& \alpha_{1} \\
			&&& \vdots\\
			&&& \alpha_{m-1}\\
			\hline 
			\beta_1 & \cdots & \beta_{m-1} & \alpha_{m}
		\end{pNiceArray}
		\]
		The determinant of $B_{\ell}^k(m-1,m-1)$ is a polynomial in the variables $\alpha_i$ (and possibly other variables independent of $\alpha$) and by induction assumption it is not the $0$ polynomial. In particular there is some linear combination of the rows of $B_{\ell}^k(m-1,m-1)$ which equals the vector $\beta = (\beta_1,\ldots,\beta_{m-1})$. Note that this linear combination may depend on $\alpha$. Subtracting this linear combination from the last row we obtain that $B_{\ell}^k(m,m)$ has the same determinant as
		\[\begin{pNiceArray}{cW{c}{2cm}c|c}
			\Block{3-3}{B_{\ell}^k(m-1,m-1)} &&& \alpha_{1} \\
			&&& \vdots\\
			&&& \alpha_{m-1}\\
			\hline 
			0 & \cdots & 0 & A
		\end{pNiceArray}
		\]
		with $A = \alpha_{m}+\sum_{j=1}^{m-1}\lambda_j(\alpha,\beta)\alpha_j$ for some $\lambda_j(\alpha,\beta) \in \mathbb{R}$. Clearly $A \not\equiv 0$ since the ``monomial'' $\alpha_{m}$ doesn't appear in $\sum_{j=1}^{m-1}\lambda_j(\alpha,\beta)\alpha_j$. Hence, by induction, we obtain that $\det B_{\ell}^k(m,m) = \det\left( B_{\ell}^k(m-1,m-1) \right) \cdot \det A \not\equiv 0$.\\
		
		\noindent We now consider the case $k=N/2$. Similarly as in the previous case we show by induction that for all $1\leq m \leq (m_{N/2})/2$ the determinant of $B_{\ell}^{N/2}(2m,2m)$ is not the $0$ polynomial.\\
		For $m= 1$ we have that 
		\[B_{\ell}^{N/2}(2,2)=
		\begin{pmatrix}
			0 & \ell([X_1^{N/2}, X_2^{N/2}]) \\
			-\ell([X_1^{N/2}, X_2^{N/2}]) & 0
		\end{pmatrix}
		\]
		which is invertible whenever $\ell([X_1^{N/2}, X_2^{N/2}]) \neq 0$. \\
		Let $m>1$. Similarly as before, we set $\alpha_i = \ell([X_i^{N/2}, X_{2m}^{N/2}])$ and $\beta_i = \ell([X_i^{N/2}, X_{2m-1}^{N/2}])$ so that $\alpha = (\alpha_{1},\ldots,\alpha_{m-1}) \in \mathbb{R}^{m-1}$, and $\beta = (\beta_1,\ldots,\beta_{m-1}) \in \mathbb{R}^{m-1}$ may depend on $\alpha$. Then, $B_{\ell}^{N/2}(2m,2m)$ has the form
		\NiceMatrixOptions{cell-space-limits = 2pt}
		\[B_{\ell}^{N/2}(2m,2m)=
		\begin{pNiceArray}{cW{c}{1cm}c|c|c}
			\Block{3-3}{B_{\ell}^k(2(m-1),2(m-1))} &&& \beta_1 & \alpha_{1} \\
			&&& \vdots & \vdots\\
			&&& \beta_{2(m-1)} & \alpha_{2(m-1)}\\
			\hline 
			-\beta_1 & \cdots & -\beta_{2(m-1)} & 0 & \alpha_{2m-1}\\
			\hline
			-\alpha_{1} & \cdots &-\alpha_{2(m-1)}& -\alpha_{2m-1} &0
		\end{pNiceArray}
		\]
		The first $2m-1$ rows and columns of $B_{\ell}^{N/2}(2m,2m)$ form a skew symmetric matrix of odd dimension which is therefore not invertible. Further, by induction, $\det B_{\ell}^k(2(m-1),2(m-1)) \not\equiv 0$. Hence, applying, the Gauß algorithm to the last two rows, we obtain that $B_{\ell}^{N/2}(2m,2m)$ has the same determinant as
		\NiceMatrixOptions{cell-space-limits = 2pt}
		\[
		\begin{pNiceArray}{cW{c}{3cm}c|c|c}
			\Block{3-3}{B_{\ell}^k(2(m-1),2(m-1))} &&& \beta_1 & \alpha_{1} \\
			&&& \vdots & \vdots\\
			&&& \beta_{2(m-1)} & \alpha_{2(m-1)}\\
			\hline 
			0 & \cdots & 0 & 0 & A\\
			\hline
			0 & \cdots & 0 & B &0
		\end{pNiceArray}
		\]
		with $A = \alpha_{2m-1}+ \sum_{i=1}^{2(m-1)}\lambda_i(\alpha,\beta)\alpha_i$ and $B = -\alpha_{2m-1}+\sum_{i=1}^{2(m-1)}\psi_i(\alpha,\beta)\beta_i$ for some $\lambda_i(\alpha,\beta), \psi_i(\alpha,\beta) \in \mathbb{R}$. Clearly $A, B \not\equiv 0$ and the matrix has full rank whenever $A, B \neq 0$.\\
		
		\noindent We have now shown that $\det B_{\ell}^k$ (resp. $\det B_{\ell}^{N/2}(m_{N/2}-1,m_{N/2}-1)$) is not trivial, as a polynomial in the variables $\ell(X_i^N)$, for every $k=1, \ldots, N$. But then, the product of these polynomials is also nontrivial, so that there must be some $\ell$ such that 
		\begin{equation*}
			\det B_{\ell}^1 \cdot \ldots \cdot \det B_{\ell}^{\lfloor N/2\rfloor} \neq 0
		\end{equation*}
		(resp. $\det B_{\ell}^1 \cdot \ldots \cdot \det B_{\ell}^{N/2-1} \det B_{\ell}^{N/2}(m_{N/2}-1,m_{N/2}-1)\neq 0$ if $m_{N/2} \in 2\mathbb{N}+1$).
	\end{proof}
	The following result is a straightforward consequence of the previous proposition and will be useful in our characterization of the orbits in general position.
	\begin{corollary}\label{cor: max rank of blm}
		Let $\{X_1^N, \ldots, X_{m_N}^N, \ldots,X_1^1, \ldots, X_{m_1}^1\}$ be a strong Malcev basis for $\fancyg_N$ such that for all $k \in \{1, \ldots, N\}$ the elements $X_1^k, \ldots, X_{m_k}^k$ are a basis of $k$-th layer of $\fancyg_N$. For $k \in \{1,\ldots, N-1\}$ and $m \in \{1,\ldots,m_{N-k}\}$ define the matrix $B_{\ell}^k(m) \in \mathbb{R}^{m_k\times m}$ by
		\[(B_{\ell}^k(m))_{i,j} = \ell([X_i^k,X_j^{N-k}]) \quad i \in \{1, \ldots, m_k\}, \;j \in \{1, \ldots, m\} \;.\]
		Then, for any $m \in \{1, \ldots, m_{N-k}\}$
		\begin{align*}
			&\operatorname{dim}(k,m) \coloneqq \max_{\ell \in \fancyg_{N}^*}\left(\operatorname{rank}B_{\ell}^k(m)\right)=
			\min\left\{m_k,m_{N-k},m\right\} \text{, if } k \neq N/2\\
			&\operatorname{dim}(N/2,m) \coloneqq \max_{\ell \in \fancyg_{N}^*}\left(\operatorname{rank}B_{\ell}^{N/2}(m)\right) = \begin{cases}
				\min\left\{m_{N/2},m\right\} &\text{ if } m_{N/2} \text{ is even}\\
				\min\left\{m_{N/2}-1,m\right\} &\text{ if } m_{N/2} \text{ is odd}\\
			\end{cases}
		\end{align*}
		and there is an $\ell \in \fancyg_{N}^*$ such that $\operatorname{rank}B_{\ell}^k(m) = \dim(k,m)$ for all $k,m$.\\
		Further, the following hold for any $k < N/2$ :
		\begin{itemize}
			\item[i)] $\operatorname{rank}B_{\ell}^k(m) = \dim(k,m)$ for all $m = 1, \ldots, m_{N-k}$ if and only if $\rank B_{\ell}^k = \dim(k,m_k)$, i.e. if and only if $B_{\ell}^k$ is invertible.
			\item[ii)] If $B_{\ell}^k$ is invertible, then $\operatorname{rank}B_{\ell}^{N-k}(m) = \dim(N-k,m)$ for all $m = 1, \ldots, m_{k}$.
		\end{itemize}
	\end{corollary}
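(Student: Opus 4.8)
I would derive everything from Proposition~\ref{prop: existence of invertible matrix} together with elementary linear algebra on ranks of column-restricted matrices, handling the layer $k=N/2$ (where $B_\ell^{N/2}$ is skew-symmetric) separately; throughout I use that the layer dimensions are nondecreasing, so $m_k\le m_{N-k}$ for $k\le N/2$ (for $d=1$ the group is abelian and the statement is trivial). First, for \emph{every} $\ell$ the matrix $B_\ell^k(m)$ has $m_k$ rows and $m\le m_{N-k}$ columns, hence $\rank B_\ell^k(m)\le\min\{m_k,m\}=\min\{m_k,m_{N-k},m\}$. When $k=N/2$, $B_\ell^{N/2}(m)$ consists of the first $m$ columns of the skew-symmetric $m_{N/2}\times m_{N/2}$ matrix $B_\ell^{N/2}$, so its column space lies in that of $B_\ell^{N/2}$; since a skew-symmetric matrix of odd size $m_{N/2}$ has rank at most $m_{N/2}-1$, this yields the sharper bound in the odd case. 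Thus $\dim(k,m)$ is at most the asserted value for all $k,m$.

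\textbf{An extremal functional; parts i) and ii).} Next I fix the functional $\ell$ produced by Proposition~\ref{prop: existence of invertible matrix}: $B_\ell^k$ is invertible for all $k<N/2$, and (if $N$ is even) $B_\ell^{N/2}$ is invertible if $m_{N/2}$ is even and has rank $m_{N/2}-1$ with linearly independent first $m_{N/2}-1$ rows if $m_{N/2}$ is odd. For $k<N/2$: since $B_\ell^k=B_\ell^k(m_k)$ is an invertible $m_k\times m_k$ matrix, its first $m$ columns are independent for $m\le m_k$ (rank $m$), and for $m_k<m\le m_{N-k}$ the matrix $B_\ell^k(m)$ has only $m_k$ rows and contains the invertible $B_\ell^k$ among its columns (rank $m_k$); in all cases $\rank B_\ell^k(m)=\min\{m_k,m\}=\dim(k,m)$, which is the ``if'' direction of i). Conversely, if $\rank B_\ell^k(m)=\dim(k,m)$ for all $m$, choosing $m=m_k$ (admissible since $m_k\le m_{N-k}$) forces $\rank B_\ell^k=\dim(k,m_k)=m_k$, so $B_\ell^k$ is invertible; this is the ``only if'' direction. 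For ii), the antisymmetry $[X_i^{N-k},X_j^k]=-[X_j^k,X_i^{N-k}]$ shows that the first $m_k$ rows of $B_\ell^{N-k}(m)$ form, up to sign, the first $m$ columns of $(B_\ell^k)^{\top}$, which has rank $m$ for $m\le m_k$ as soon as $B_\ell^k$ is invertible; since $B_\ell^{N-k}(m)$ has $m$ columns, $\rank B_\ell^{N-k}(m)=m=\dim(N-k,m)$. Combined with the $k=N/2$ case below, this shows that the single $\ell$ above realizes $\rank B_\ell^k(m)=\dim(k,m)$ for all $k\in\{1,\dots,N-1\}$ and all admissible $m$ (for $k>N/2$ apply ii) with $k'=N-k<N/2$, using that $B_\ell^{k'}$ is invertible).

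\textbf{The layer $k=N/2$.} If $m_{N/2}$ is even, $B_\ell^{N/2}$ is invertible and the argument for $k<N/2$ gives $\rank B_\ell^{N/2}(m)=\min\{m,m_{N/2}\}$. If $m_{N/2}$ is odd, write $B_\ell^{N/2}=\begin{pmatrix} C & u\\ -u^{\top} & 0\end{pmatrix}$ with $C$ the leading $(m_{N/2}-1)\times(m_{N/2}-1)$ block. The key point is that $C$ is invertible: $C$ is skew-symmetric of even size, so $\rank C$ is even, and $\rank C<m_{N/2}-1$ would force $\rank[\,C\mid u\,]\le\rank C+1<m_{N/2}-1$, contradicting the independence of the first $m_{N/2}-1$ rows of $B_\ell^{N/2}$. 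With $C$ invertible, for $m\le m_{N/2}-1$ the first $m$ columns of $C$ sit in $B_\ell^{N/2}(m)$ as a rank-$m$ submatrix, and for $m\in\{m_{N/2}-1,m_{N/2}\}$ the full block $C$ sits in $B_\ell^{N/2}(m)$ while $\rank B_\ell^{N/2}(m)\le\rank B_\ell^{N/2}=m_{N/2}-1$; hence $\rank B_\ell^{N/2}(m)=\min\{m,m_{N/2}-1\}$ in all cases.

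\textbf{Main difficulty.} Given Proposition~\ref{prop: existence of invertible matrix}, none of this is deep: the only step requiring a genuine (if short) argument is promoting ``the first $m_{N/2}-1$ rows of the skew-symmetric $B_\ell^{N/2}$ are independent'' to ``its leading $(m_{N/2}-1)$-block is invertible'', and beyond that the work is purely the bookkeeping of which column-restrictions of the various $B_\ell^k$ occur as submatrices of which others, arranged so that the upper and lower rank bounds meet exactly.
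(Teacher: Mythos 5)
Your proof is correct and follows essentially the same route as the paper's: both deduce everything from Proposition~\ref{prop: existence of invertible matrix} by rank bookkeeping on column restrictions of $B_\ell^k$, both use the antisymmetry/transpose relation between $B_\ell^{N-k}(m)$ and $B_\ell^k$ for part ii), and your explicit treatment of the layer $k=N/2$ (in particular promoting the independence of the first $m_{N/2}-1$ rows to invertibility of the leading $(m_{N/2}-1)\times(m_{N/2}-1)$ block) simply fills in what the paper leaves as ``the same arguments adapted to $k=N/2$''. One small correction to your justification: the layer dimensions of $\fancyg_N(\R^d)$ are not literally nondecreasing when $d=2$ (indeed $m_1=2>m_2=1$); the inequality you actually need, $m_k\le m_{N-k}$ for $k\le N/2$, fails only for $\fancyg_3(\R^2)$, the degenerate case which the paper excludes elsewhere and for which the matrices $B_\ell^k$ of the statement are not even well defined.
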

	\begin{proof} Fix a $k < N/2$. The implication ``$\Rightarrow$'' in $i)$ holds trivially because $m_k \leq m_{N-k}$. \\
		Now, assume $\rank B_{\ell}^k = \dim(k,m_k)$. We may assume without loss of generality that $m < m_k$, because if $m\geq m_k$, then $\operatorname{rank} B_{\ell}^k(m) \geq \operatorname{rank} B_{\ell}^k(m_k) = \rank B_{\ell}^k = \dim(k,m_k) = \dim(k,m)$.\\
		For all $m< m_k$ the matrix $B_{\ell}^k(m) \in \mathbb{R}^{m_k\times m}$ is the submatrix of $B_{\ell}^k$ containing only the first $m$ columns of $B_{\ell}^k$. Since, by assumption, $B_{\ell}^k$ is invertible, these columns are linearly independent so that 
		$\operatorname{rank} B_{\ell}^k(m) = m$ for all $m \leq m_k$. Hence, $\operatorname{rank} B_{\ell}^k(m) \geq \min\left\{m_k,m_{N-k},m\right\}$ for all $m \leq m_k$, which concludes the proof of $i)$.\\
		Now, consider the matrix $B_{\ell}^k(m,m_{N-k}) \in \R^{m \times m_{N-k}}$ containing only the first $m$ rows of $B_{\ell}^k(m_{N-k})$. Then, it is easy to check that $B_{\ell}^{N-k}(m) = -B_{\ell}^k(m,m_{N-k})^T$.\\
		Assuming $B_{\ell}^k$ is invertible, then by part $i)$ $\rank B_{\ell}^k(m_{N-k}) = \min\left\{m_k,m_{N-k}\right\} = m_k$, but since $B_{\ell}^k(m_{N-k}) \in \R^{m_k \times m_{N-k}}$ this means that the rows of $B_{\ell}^k(m_{N-k})$ must be linearly independent. Hence, we obtain that $\rank B_{\ell}^{N-k}(m) = \rank B_{\ell}^k(m,m_{N-k}) = m = \min\left\{m_k,m_{N-k}\right\}$, which shows $ii)$. \\
		Note that by definition, $B_{\ell}^k(m) \in \mathbb{R}^{m_k\times m}$ for any $k = 1, \ldots, N$ and $m = 1, \ldots, m_k$ , so that $\operatorname{rank} B_{\ell}^k(m)\leq \min\left\{m_k,m\right\} = \min\left\{m_k,m_{N-k},m\right\}$. With this observation, $i)$ and $ii)$ together prove in particular the first part of the corollary for all $k \neq N/2$, since, by Proposition \ref{prop: existence of invertible matrix}, we know that there is an $\ell \in \fancyg_N^*$ such that $B_{\ell}^k$ is invertible for all $k <N/2$. The case $k=N/2$ follows easily by using the same arguments as above adapted to $k =N/2$.        
	\end{proof}
	\begin{remark}
		Similar statements to $i)$ and $ii)$ in Corollary \ref{cor: max rank of blm} also hold for $k = N/2$, where, as in Proposition \ref{prop: existence of invertible matrix}, one would have to differentiate the cases $m_{N/2}$ even and odd. Since for the proof of Theorem \ref{thm: characterization of gen orbits} we only need $i)$ and $ii)$ for $k \neq N/2$, we omit them to avoid cluttering in the Corollary's statement.
	\end{remark}
	\begin{remark}
		Note that for all $k\neq N/2$ we have that 
		$\operatorname{dim}(N-k,m) = \min\left\{m_k,m_{N-k},m\right\} = \operatorname{dim}(k,m)$, in particular $\operatorname{dim}(k,m)$ is symmetric around $N/2$.
	\end{remark}
	\begin{example}\label{ex: general orbits of g2 pt2}
		Combining our observations from Example \ref{ex: general orbits of g2 pt1} and Proposition \ref{prop: existence of invertible matrix} we see that if $d$ is even, then $\ell \in \fancyg_2^*(\R^d)$ is in general position if and only if $B_{\ell}^1$ is invertible. On the other hand, if $d$ is odd, then $\ell$ is in general position if and only if the first $d-1$ rows of $B_{\ell}^1$ are linearly independent.
	\end{example}
	\begin{example}\label{ex: general orbits of g_3}
		Consider the group $G_3 = G_3(\R^d)$ for $d>3$. By example \ref{ex: explicit orbit of g3} we know that the coadjoint orbit of any $\ell = \sum_{i,j}\alpha_i^j\ell_i^j \in \fancyg_3^*$ is given by
		\begin{align*}
			\mathcal{O}_{\ell}= &\Bigg\{\sum_{i=1}^d \left(\alpha_i^1 + \sum_{j=1}^{d} g_j^1\ell([X_i^1,X_j^1]) + \sum_{j=1}^{m_2} g_j^2\ell([X_i^1,X_j^2]) + \frac{1}{2} \sum_{j,s = 1}^d g_j^1g_s^1\ell([X_j^1[X_i^1,X_s^1]]) \right)\ell_i^1\\
			&+ \sum_{i=1}^{m_2}\left(\alpha_i^2 + \sum_{j=1}^{d} g_j^1\ell([X_i^2,X_j^1])\right)\ell_i^2 + \sum_{i=1}^{m_3} \alpha_i^3\ell_i^3 \;\Big| \; (g_1^1, g_2^1, \ldots, g_{m_2}^2) \in \R^{d+m_2}\Bigg\} \;.
		\end{align*}
		Hence, for any $m \in \{1, \ldots,m_2\}$ we have that
		\begin{align*}
			\dim\left(\mathcal{O}_{\ell}/\fancyg_{2}^*(2, m)\right) &=   \dim\Bigg\{\sum_{i=1}^{m}\left(\alpha_i^2 + \sum_{j=1}^{d} g_j^1\ell([X_i^2,X_j^1])\right)\ell_i^2 + \sum_{i=1}^{m_3} \alpha_i^3\ell_i^3 \;\Big| \; (g_1^1,\ldots, g_{d}^1) \in \R^{d}\Bigg\} \\
			&=   \dim\Bigg\{\sum_{i=1}^{m}\left(\sum_{j=1}^{d} g_j^1\ell([X_i^2,X_j^1])\right)\ell_i^2 \;\Big| \; (g_1^1,\ldots, g_{d}^1) \in \R^{d}\Bigg\}\\ 
			&= \operatorname{rank} -B_{\ell}^{1}(m)^T = \operatorname{rank} B_{\ell}^{1}(m) \;.
		\end{align*}
		By Corollary \ref{cor: max rank of blm} the maximal attainable rank of $B_{\ell}^{1}(m)$ is $\min\{d,m_2,m\} = \min\{d,m\}$. Hence, a necessary condition for $\ell$ to be in general position is that $\operatorname{rank} B_{\ell}^{1}(m) = \min\{m,d\}$ for all $m \leq m_2$. By the same corollary this is equivalent to $B_{\ell}^{1} = B_{\ell}^{1}(d)$ being invertible. Let now $m \in \{1, \ldots, d\}$. Then,
		\begin{align*}
			\dim&\left(\mathcal{O}_{\ell}/\fancyg_{2}^*(1, m)\right) \\
            =& \dim \Bigg\{\sum_{i=1}^m \left(\sum_{j=1}^{d} g_j^1\ell([X_i^1,X_j^1]) + \sum_{j=1}^{m_2} g_j^2\ell([X_i^1,X_j^2]) + \frac{1}{2} \sum_{j,s = 1}^d g_j^1g_s^1\ell([X_j^1[X_i^1,X_s^1]]) \right)\ell_i^1\\
			&+ \sum_{i=1}^{m_2}\left(\sum_{j=1}^{d} g_j^1\ell([X_i^2,X_j^1])\right)\ell_i^2 \;\Big| \; (g_1^1, g_2^1, \ldots, g_{m_2}^2) \in \R^{d+m_2}\Bigg\} \;.
		\end{align*}
		By the previous observation, we know that the term
		$\sum_{i=1}^{m_2}\left(\sum_{j=1}^{d} g_j^1\ell([X_i^2,X_j^1])\right)\ell_i^2$
		spans a space of dimension at most $d$, so that clearly $\dim\left(\mathcal{O}_{\ell}/\fancyg_{2}^*(1, m)\right) \leq m +d$. Further, we observe that the dimension of the space spanned by the term
		\begin{equation}\label{eq: ex. space spanned g_3}
			\sum_{i=1}^m \left(\sum_{j=1}^{m_2} g_j^2\ell([X_i^1,X_j^2])\right)\ell_i^1
		\end{equation}
		is precisely the rank of the matrix $-B_{\ell}^2(m)^T$. By Corollary \ref{cor: max rank of blm} we know that if $B_{\ell}^1$ is invertible, then $\rank B_{\ell}^2(m)$ is maximal and equal to $m$ for all $m =1, \ldots, d$. 
		Note that the space spanned by (\ref{eq: ex. space spanned g_3}) only depends on $g_1^2, \ldots, g_m^2$ while the space spanned by the $\ell_i^2$ only depends on $g_1^1,\ldots,g_d^1$. Hence, if $B_{\ell}^1$ is invertible, we have that  $\dim\left(\mathcal{O}_{\ell}/\fancyg_{2}^*(1, m)\right) \geq m +d$. Since $B_{\ell}^1$ being invertible was also a necessary condition for $\ell$ to be in general position, we obtain in total that $\ell$ is in general position if and only if $B_{\ell}^1$ is invertible. In this case, for any $m = 1, \ldots, d$
		\begin{align*}
			\dim\left(\mathcal{O}_{\ell}/\fancyg_{2}^*(2, m)\right) = m \quad \text{and} \quad
			\dim\left(\mathcal{O}_{\ell}/\fancyg_{2}^*(1, m)\right) = d + m \;.
		\end{align*}
	\end{example}
	\begin{remark}
		The reason why we needed to exclude the case $G_3 = G_3(\R^2)$ in the previous example is that it represents, in some sense, an exceptional case. In our argument, we used implicitly that for any $k<N/2$ $\dim p_k(\fancyg_N)< \dim p_{N-k}(\fancyg_N)$. However, this assumption is not true for $\fancyg_3(\R^2)$, since $\dim \R^2 = 2 > 1 = \dim [\R^2]^2$. We deal with this example  explicitly in the appendix \ref{sec: General orbits of G3R2}.
	\end{remark}
	In the previous two examples, the problem of characterizing the orbits in general position could be reduced to verifying the invertibility of the matrix $B_{\ell}^1$. As the following theorem shows, these examples, represent only a special case of a more general result.
	\begin{theorem}\label{thm: characterization of gen orbits}
		Let $\{X_1^N, \ldots, X_{m_N}^N, \ldots,X_1^1, \ldots, X_{m_1}^1\}$ be a strong Malcev basis for $\fancyg_N \neq \fancyg_3(\R^2)$ such that for all $k \in \{1, \ldots, N\}$ the elements $X_1^k, \ldots, X_{m_k}^k$ are a basis of $k$-th layer of $\fancyg_N$, and let $\ell\in \fancyg_{N}^*$. Then, with the notation of Corollary \ref{cor: max rank of blm}, the following are equivalent.
		\begin{itemize}
			\item[i)] The dimension of $\mathcal{O}_{\ell}$ is maximal in $\fancyg_{N}^*/\fancyg_{N}^*((N-k, m)) = \mathbb{R}$-span$\{\ell_1^N, \ldots, \ell_{m}^{N-k}\}$ for all $k \in \{1, \ldots,N-1\}$ and $m \in \{1, \ldots, m_{N-k}\}$, i.e. $\ell$ is in general position.
			\item[ii)] The rank of the matrix $B_{\ell}^{k}(m)$ is maximal among elements of $\fancyg_{N}^*$ for all $k \in \{1, \ldots,N-1\}$ and $m \in \{1, \ldots, m_{N-k}\}$.
			\item[iii)] The rank of the matrix $B_{\ell}^{k}$ is maximal,  among elements of $\fancyg_{N}^*$ for all $k \in \{1, \ldots,\lfloor N/2\rfloor\}$. I.e. $B_{\ell}^{k} = B_{\ell}^{k}(m_k)$ satisfies property $i)$ resp. $ii)$ from Proposition \ref{prop: existence of invertible matrix}.
		\end{itemize}
		If $\ell$ satisfies any of the conditions above, then for all $k \in \{1, \ldots,N-1\}$
		\begin{align}\label{eq: dimension formual for generic orbits}
			\operatorname{dim}\left(\mathcal{O}_{\ell} / \fancyg_N^*(N - k, m)\right) =
			\sum_{s=1}^{k-1}\operatorname{dim}(s,m_s)+ \operatorname{dim}(k,m)
		\end{align}
	\end{theorem}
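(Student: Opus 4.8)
The plan is to reduce the statement to an explicit coordinate description of the coadjoint orbit and then to a layer‑by‑layer dimension count. Note first that the equivalence ii)$\Leftrightarrow$iii) is essentially a repackaging of Corollary~\ref{cor: max rank of blm} (its parts i), ii) for $k\ne N/2$, together with the analogous statement for the middle layer mentioned in the remark afterwards), so the genuine content is i)$\Leftrightarrow$iii) together with the dimension formula (\ref{eq: dimension formual for generic orbits}). To get at it, I would first generalize Examples~\ref{ex: explicit orbits of g2} and~\ref{ex: explicit orbit of g3}: writing $x=\exp X$ with $X=\sum_{k,i}g_i^kX_i^k$ and expanding $\bigl((\coadjoint\exp X)\ell\bigr)(X_i^{N-k})=\ell\bigl(\Adjoint(\exp -X)X_i^{N-k}\bigr)$ via the finite exponential series, using only the grading $[(\Rd)^{\otimes a},(\Rd)^{\otimes b}]\subseteq(\Rd)^{\otimes(a+b)}$ and the truncation at level $N$, one obtains
\[
\bigl((\coadjoint\exp X)\ell\bigr)(X_i^{N-k})=\alpha_i^{N-k}+\sum_{j=1}^{m_k}g_j^k\,\ell\bigl([X_i^{N-k},X_j^k]\bigr)+P_i^{N-k}\bigl(g^1,\dots,g^{k-1}\bigr),
\]
where $P_i^{N-k}$ is a polynomial not involving $g^k,\dots,g^N$. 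Thus the image of $\mathcal O_\ell$ in $\fancyg_N^*/\fancyg_N^*(N-k,m)$ is the image of a polynomial map $\Phi_{k,m}$ in the variables $g^1,\dots,g^k$, and, ordering the target by layers $N-1,N-2,\dots,N-k$ (layer $N$ is fixed, being central) and the source by $g^1,g^2,\dots,g^k$, the differential of $\Phi_{k,m}$ is block lower triangular: the block coupling $g^r$ to layer $N-j$ vanishes for $r>j$, and the $j$‑th diagonal block is the constant matrix $\bigl(\ell([X_i^{N-j},X_l^j])\bigr)$, i.e.\ the transpose of a submatrix of $B_\ell^j(m_{N-j})$, resp.\ of $B_\ell^k(m)$ for $j=k$.

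For the dimension formula, fix $\ell$ with all $B_\ell^k$ of maximal rank — such $\ell$ exists by Proposition~\ref{prop: existence of invertible matrix}, and then all $B_\ell^k(m)$ are of maximal rank by Corollary~\ref{cor: max rank of blm}. I would prove (\ref{eq: dimension formual for generic orbits}) by induction on $k$: the map $\Phi_{k,m}$ factors through $\Phi_{k-1,m_{N-k+1}}$, whose image has dimension $\sum_{s=1}^{k-1}\dim(s,m_s)$ by the inductive hypothesis. If $k\le N/2$, then $B_\ell^1,\dots,B_\ell^{k-1}$ are invertible (with the usual modification when $N$ is even and $m_{N/2}$ is odd), hence each diagonal block up to $k-1$ is injective; recovering $g^1$ from layer $N-1$, then $g^2$ from layer $N-2$, and so on shows that the fibre of $\Phi_{k-1,\cdot}$ over a generic point is a single point, so the first $m$ coordinates of layer $N-k$ then range over a fixed translate of $\operatorname{image}B_\ell^k(m)$, of dimension $\dim(k,m)$. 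If $k>N/2$, the earlier $g^s$ need no longer be determined, but now the fresh block $B_\ell^k(m)$ has maximal rank $\dim(k,m)=\min\{m_k,m_{N-k},m\}=m$ (since $m\le m_{N-k}<m_k$), hence is surjective onto the first $m$ coordinates of layer $N-k$; therefore any contamination of these coordinates produced by moving the remaining $g^s$'s is absorbed by a suitable choice of $g^k$ — this is precisely the correction trick of Example~\ref{ex: general orbits of g_3}. In either regime one obtains $\dim(\mathcal O_\ell/\fancyg_N^*(N-k,m))=\sum_{s=1}^{k-1}\dim(s,m_s)+\dim(k,m)$, which by Corollary~\ref{cor: max rank of blm} is also the maximum of $\dim(\mathcal O_{\ell'}/\fancyg_N^*(N-k,m))$ over all $\ell'$; this yields the formula and the implication iii)$\Rightarrow$i).

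For the converse, I would argue i)$\Rightarrow$ii) by induction on $k$. For an arbitrary $\ell$, the shape of $\Phi_{k,m}$ gives at once $\dim(\mathcal O_\ell/\fancyg_N^*(N-1,m))=\operatorname{rank}B_\ell^1(m)$, so if $\ell$ is in general position then $B_\ell^1(m)$ is of maximal rank for all $m$, hence $B_\ell^1$ is of maximal rank by Corollary~\ref{cor: max rank of blm}. Assuming $B_\ell^1,\dots,B_\ell^{k-1}$ of maximal rank, the same fibre analysis as above yields $\dim(\mathcal O_\ell/\fancyg_N^*(N-k,m))=\sum_{s<k}\dim(s,m_s)+\operatorname{rank}B_\ell^k(m)$; since the left‑hand side equals its maximal value $\sum_{s<k}\dim(s,m_s)+\dim(k,m)$ by the previous step, we get $\operatorname{rank}B_\ell^k(m)=\dim(k,m)$ for all $m$, hence $B_\ell^k$ of maximal rank by Corollary~\ref{cor: max rank of blm}. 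This gives i)$\Rightarrow$ii)$\Leftrightarrow$iii), and combined with the generic case all three conditions are equivalent and (\ref{eq: dimension formual for generic orbits}) holds.

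The delicate point — the main obstacle — is the fibre‑dimension count underlying the last two steps: showing that the polynomial contamination $P_i^{N-k}(g^1,\dots,g^{k-1})$ neither inflates $\dim(\mathcal O_\ell/\fancyg_N^*(N-k,m))$ above $\sum_{s<k}\dim(s,m_s)+\operatorname{rank}B_\ell^k(m)$, nor (for generic $\ell$) prevents this value from being attained. Equivalently, the block lower‑triangular differential of $\Phi_{k,m}$ has rank exactly the sum of the maximal ranks of its diagonal blocks; the two regimes $k\le N/2$ (diagonal blocks injective, so the lower coordinates determine the earlier $g^s$) and $k>N/2$ (fresh block surjective, contamination corrected) must be treated separately, with a further special case when $N$ is even and $B_\ell^{N/2}$ is skew‑symmetric of corank one. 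This is also exactly where the hypothesis $\fancyg_N\ne\fancyg_3(\R^2)$ enters: the whole layer‑triangular bookkeeping rests on $\dim p_k(\fancyg_N)\le\dim p_{N-k}(\fancyg_N)$ for $k\le N/2$, which holds for every free nilpotent $\fancyg_N(\R^d)$ except $\fancyg_3(\R^2)$.
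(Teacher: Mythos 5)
Your proposal is correct and follows essentially the same route as the paper: the same layer-by-layer expansion of the coadjoint action with the remainder depending only on $g^1,\dots,g^{k-1}$, the same induction on $k$ split into the regimes $k\le N/2$ and $k>N/2$, and the same use of Proposition \ref{prop: existence of invertible matrix} and Corollary \ref{cor: max rank of blm}; your block-triangular map $\Phi_{k,m}$ with its fibre analysis is just a reformulation of the paper's decomposition of $\mathcal{O}_\ell/\fancyg_N^*(N-k,m)$ into a part $W_\ell^{k-1}$ depending only on $g^1,\dots,g^{k-1}$ plus the linear part $V_\ell^k$ coming from $B_\ell^k(m)$. The only place to tighten is your "delicate point": the upper bound for \emph{arbitrary} $\ell'$ is obtained exactly as in the paper by absorbing the contamination into the $W$-part (so $\dim W\le m_1+\cdots+m_{k-1}$ and $\dim(W+V)\le\dim W+\dim V$), rather than by a rank bound on the block-triangular differential alone.
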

	\begin{proof}
		We assume $N > 3$ since we already computed the cases $N=2,3$ explicitly in examples \ref{ex: general orbits of g2 pt2} and \ref{ex: general orbits of g_3} respectively.
		Let $\ell = \sum_{s=1}^{N}\sum_{j=1}^{m_s}\alpha_j^s\ell_j^s \in \fancyg_{N}^*$.\\
		The equivalence $ii) \iff iii)$ is precisely what we showed in Corollary \ref{cor: max rank of blm}.\\   
		We now prove the equivalence $i) \iff ii)$ together with the dimension formula (\ref{eq: dimension formual for generic orbits}) by induction over $k$.\\
		$1)\;$ $k = 1$. Let $X = \sum_{s=1}^{N}\sum_{j=1}^{m_s}g_j^sX_j^s \in \fancyg_{N}$. Then, for all $i \in \{1, \ldots, m_{N-1}\}$
		\begin{align*}
			(\Adjoint \operatorname{exp}-X)X_i^{N-1} &= X_i^{N-1} + \sum_{n=1}^{\infty}\frac{1}{n!}(\operatorname{ad}-X)^n X_i^{N-1}\\
			&=  X_i^{N-1} + [X_i^{N-1}, X] = X_i^{N-1} + \sum_{j=1}^{m_1}g_j^1[X_i^{N-1},X_j^1]
		\end{align*}
		Hence, for any $m \in \{1, \ldots, m_{N-1}\}$ the orbit of $\ell$ in $\mathbb{R}$-span$\{\ell_1^N, \ldots, \ell_{m}^{N-1}\}$ is given by
		\begin{align*}
			\mathcal{O}_{\ell} / \fancyg_N^*(N-1, m) = \Bigg\{ 
			& \sum_{i=1}^{m} \left(\alpha_i^{N-1} + \sum_{j=1}^{m_1} \ell([X_i^{N-1}, X_j^1])g_j^1\right) \ell_i^{N-1} \\
			& + \sum_{i=1}^{m_N} \alpha_i^N \ell_i^N \; \Big| \; (g_1^1, \ldots, g_{m_1}^1) \in \mathbb{R}^{m_1} \Bigg\}
		\end{align*}
		From this equation we see that the dimension of $\mathcal{O}_{\ell} / \fancyg_N^*(N-1, m)$ is precisely the dimension of the image, or in other words the rank, of the $m \times m_1$ matrix with entries $\ell([X_i^{N-1}, X_j^1])$. Note that this matrix is precisely $-(B_{\ell}^{1}(m))^T$. Hence, we obtain that the dimension of $\mathcal{O}_{\ell} / \fancyg_N^*(N-1, m)$ is maximal for all $m$ if and only if the rank of $B_{\ell}^{1}(m)$ is. By Corollary \ref{cor: max rank of blm}, we know that the maximal rank of $B_{\ell}^{1}(m)$ is given by $\dim(1,m)$ and it is attained if and only if $B_{\ell}^1$ is invertible\vspace{5pt}.\\  
		\noindent $2)$ Assume now $k>1$. Let $X = \sum_{s=1}^{N}\sum_{j=1}^{m_s}g_j^sX_j^s \in \fancyg_{N}$, then
		\begin{alignat*}{3}
			(\Adjoint \operatorname{exp} - X)X_i^{N-k} 
			& = X_i^{N-k} 
			& & + \sum_{n=1}^{\infty} \frac{1}{n!} (\operatorname{ad} -X)^n X_i^{N-k} \\
			& = X_i^{N-k} 
			& &+ [X_i^{N-k}, X] 
			+ \sum_{n=2}^{\infty} \frac{1}{n!} (\operatorname{ad} -X)^n X_i^{N-k} \\
			& = X_i^{N-k} 
			&& + \sum_{s=1}^{k} \sum_{j=1}^{m_s} g_j^s [X_i^{N-k}, X_j^s] \\
			&& & + \sum_{n=0}^{\infty} \frac{1}{(n+2)!} (\operatorname{ad} -X)^n [[X_i^{N-k}, X], X] \;.
		\end{alignat*}
		Note that the very last sum only depends on the $g_j^s$ with $s \leq k-1$, since for all $n \in \mathbb{N}_0$ $(\operatorname{ad}-X_j^{s_1})^n [[X_i^{N-k}, X_j^{s_2}], X_j^{s_3}] = 0$ if any of the $s_i \geq k$. Hence, if we set
		\begin{equation*}
			R(g_1^1,g_2^1, \ldots, g_{m_{k-1}}^{k-1}) = \sum_{s=1}^{k-1} \sum_{j=1}^{m_s} g_j^s \; \ell([X_i^{N-k}, X_j^s]) + \sum_{n=0}^{\infty} \frac{1}{(n+2)!}\; \ell\left((\operatorname{ad} -X)^n [[X_i^{N-k}, X], X]\right)
		\end{equation*}
		the orbit of $\ell$ in $\fancyg_N^*(N-k, m)$ has the form
		\begin{align}\label{eq: korbit}
			\mathcal{O}_{\ell} / \fancyg_N^*(N-k, m) = &\Bigg\{ 
			\sum_{i=1}^{m} \left(\alpha_i^{N-k} + \sum_{j=1}^{m_k} g_j^k \ell([X_i^{N-k}, X_j^k]) + R(g_1^1,g_2^1, \ldots, g_{m_{k-1}}^{k-1})\right) \ell_i^{N-k} \notag \\
			& + \sum_{s = 1}^{k-1} \sum_{i=1}^{m_s} r_i^{N-s} \ell_i^{N-s} \; \Big| \; (g_1^1,g_2^1, \ldots, g_{m}^k) \in \mathbb{R}^{m_1+m_2+\ldots + m} \Bigg\}
		\end{align}
		where the $r_i^{N-s}$ are some real coefficients that depend on the $g_i^s$.\\
		We need to differentiate two cases. First, assume $1<k\leq N/2$. Then,
		$\operatorname{dim}(N-s,m) = \min\left\{m_s,m\right\}$ for all $s< k$.\\
		Now, the term
		$\sum_{s = 1}^{k-1} \sum_{i=1}^{m_s} r_i^{N-s} \ell_i^{N-s}$ only depends on $(g_1^1,g_2^1 \ldots, g_{m_{k-1}}^{k-1}) \in \mathbb{R}^{m_1+m_2+\ldots + m_{k-1}}$.
		Since, as we mentioned before, also $R(g_1^1,g_2^1, \ldots, g_{m_{k-1}}^{k-1})$ only depends on $(g_1^1,g_2^1 \ldots, g_{m_{k-1}}^{k-1})$ we can rewrite the orbit as
		
		\[\mathcal{O}_{\ell} / \fancyg_N^*(N - k, m) = W_{\ell}^{k-1} + V_{\ell}^k \quad \text{where}\]
		\begin{alignat*}{3}
			W_{\ell}^{k-1} &=  \Bigg\{
			&& \sum_{i=1}^{m} \left(\alpha_i^{N-k}+ R(g_1^1,g_2^1, \ldots, g_{m_{k-1}}^{k-1})\right) \ell_i^{N-k}\\
			&&& + \sum_{s = 1}^{k-1} \sum_{i=1}^{m_s} r_i^{N-s} \ell_i^{N-s} \; \Big| \; (g_1^1,g_2^1 \ldots, g_{m_{k-1}}^{k-1}) \in \mathbb{R}^{m_1+m_2+\ldots + m_{k-1}} \Bigg\} \quad \text{and} \\
			V_{\ell}^k &= \Bigg\{ 
			&&\sum_{i=1}^{m} \left(\sum_{j=1}^{m_k} g_j^k \ell([X_i^{N-k}, X_j^k])\right) \ell_i^{N-k} \; \Big| \; (g_1^k, \ldots, g_{m_k}^k) \in \mathbb{R}^{m_k} \Bigg\} \;.
		\end{alignat*}
		
		Clearly $\max_{\ell' \in \fancyg_{N}^*}\left(\operatorname{dim}W_{\ell'}^{k-1}\right)\leq m_1+\ldots + m_{k-1}$. On the other hand, by induction assumption, if $\operatorname{rank}B_{\ell}^s(m_s)$ is maximal for all $s<k$, then $$\operatorname{dim}W_{\ell}^{k-1} = \operatorname{dim}(N-1,m_1)+\ldots + \operatorname{dim}(N-k+1,m_{k-1})=m_1+\ldots + m_{k-1} \;.$$
		Further, as before, $\dim(V_{\ell}^k) =\operatorname{rank}(B_{\ell}^k(m))$. Hence, if the rank of $B_{\ell}^k(m)$ is also maximal we obtain that
		\begin{align*}
			\operatorname{dim}\left(\mathcal{O}_{\ell} / \fancyg_N^*(N - k, m)\right)
			&= \sum_{s=1}^{k-1}\operatorname{dim}(N-s,m_s)+ \operatorname{dim}(N-k,m)\\
			&= \sum_{s=1}^{k-1}m_s+ \max_{\ell' \in \fancyg_{N}^*}\left(\rank B_{\ell'}^k(m)\right)\\
			&=\max_{\ell' \in \fancyg_{N}^*}\left(\dim W_{\ell'}^{k-1} + \dim V_{\ell}^k\right) = \max_{\ell' \in \fancyg_{N}^*}\big(\operatorname{dim}\left(\mathcal{O}_{\ell'} / \fancyg_N^*(N - k, m)\right) \big)
		\end{align*}
		Conversely, if the dimension of $\mathcal{O}_{\ell} / \fancyg_N^*(N - k, m)$ is maximal, then so is $\dim(V_{\ell}^k) = \operatorname{rank}(B_{\ell}^k(m))$\vspace{5pt}.\\
		We now assume $k>N/2$. \\
		Then, $N-k<N/2$ and $m_k>m_{N-k}$. If $\ell$ is in general position, then we know by induction that $B_{\ell}^{N-k}$ must have maximal rank. By Corollary \ref{cor: max rank of blm} $ii)$ this implies that $B_{\ell}^{k}(m)$ has maximal rank for all $m$, i.e. $\rank B_{\ell}^{k}(m) = \dim(k,m)$.\\
		Conversely, assume that $\operatorname{rank}B_{\ell}^{N-s}(m)=\dim(N-s,m)$ for all $s$. By induction the term $ \sum_{s = 1}^{k-1} \left(\sum_{i=1}^{m_s} r_i^{N-s} \ell_i^{N-s} \right)$ in equation (\ref{eq: korbit}) spans a subspace of dimension $\sum_{s =1}^{k-1}\dim(N-s,m_s)$. Further, for all $m \in \{1, \ldots, m_{N-k}\}$ the term
		\[\sum_{i=1}^{m} \left(\alpha_i^{N-k} + \sum_{j=1}^{m_k} g_j^k \ell([X_i^{N-k}, X_j^k]) + R(g_1^1,g_2^1, \ldots, g_{m_{k-1}}^{k-1})\right) \ell_i^{N-k}\]
		spans a subspace of dimension less or equal $m =\min\left\{m,m_{N-k}\right\} =\min\left\{m,m_{N-k},m_k\right\} = \dim(N-k,m)$ and again equality holds if $B_{\ell}^{k}(m)$ has rank $m$.
		This concludes the proof of $i) \iff ii)$ and of the dimension formula.
	\end{proof}
	\begin{remark}\label{rem: general dimensions jump}
		Dimension formula (\ref{eq: dimension formual for generic orbits}) shows an important behavior of the maximal dimension of orbits which will play a crucial role in the Fourier inversion formula. For any $k<N/2$ the dimension in $\fancyg_{N}^*/\fancyg_{N}^*(N-k,m)$ of orbits in general position increases by one for every basis element we add to the space up to the $m_k$-th element, and remains constant after that. In other words for all $k<N/2$ we have that
		\begin{align}
			&\max_{\ell \in \fancyg_{N}^*}\operatorname{dim}\left(\mathcal{O}_{\ell}/\fancyg_{N-k}^*(N-k,m+1)\right) = \max_{\ell \in \fancyg_{N}^*}\operatorname{dim}\left(\mathcal{O}_{\ell}/\fancyg_{N-k}^*(N-k,m)\right) +1 \quad \text{if } m<m_k \label{eq: beahviour of dimensions 1}\\
			&\max_{\ell \in \fancyg_{N}^*}\operatorname{dim}\left(\mathcal{O}_{\ell}/\fancyg_{N-k}^*(N-k,m_k+j)\right) = \max_{\ell \in \fancyg_{N}^*}\operatorname{dim}\left(\mathcal{O}_{\ell}/\fancyg_{N-k}^*(N-k,m_k)\right) \quad \text{for all } j>0 \;.\label{eq: beahviour of dimensions 2}
		\end{align}
		The same formula remains true if $k = N/2$ and $m_{N/2} \in 2\mathbb{N}$, whereas in the case $m_{N/2} \in 2\mathbb{N}+1$ one needs to substitute $m_{N/2}$ with $m_{N/2}-1$ in (\ref{eq: beahviour of dimensions 1}) and (\ref{eq: beahviour of dimensions 2}). If $k>N/2$ then $m_{N-k}<m_k$ so that the formula becomes
		\[\max_{\ell \in \fancyg_{N}^*}\operatorname{dim}\left(\mathcal{O}_{\ell}/\fancyg_{N}^*(N-k,m+1)\right) = \max_{\ell \in \fancyg_{N}^*}\operatorname{dim}\left(\mathcal{O}_{\ell}/\fancyg_{N}^*(N-k,m)\right) +1 \quad \text{if } m<m_{N-k}\]
	\end{remark}
	Combining Theorem \ref{thm: vergne} with the characterization of general orbits provided by Theorem \ref{thm: characterization of gen orbits} allows us to compute a polarization for any $\ell \in \fancyg_N^*$ in general position.
	\begin{corollary}\label{cor: explicit polarization}
		Let $\ell \in \fancyg_N^*$ be a functional such that $\mathcal{O}_{\ell}$ is in general position.
		\begin{itemize}
			\item[i)] If $N \in \mathbb{N}$ is odd, then 
			$$\fancym_{\ell} = \bigoplus_{k=\lceil N/2 \rceil}^N [\mathbb{R}^d]^k = \fancyg_N/\fancyg_{\lfloor N/2\rfloor}$$
			is a polarization for $\ell$.
			\item[ii)]  If $N \in \mathbb{N}$ is even. For any $m \in \{1,\ldots,m_{N/2}\}$ let $B_{\ell}^{N/2}(m,m)$ be the submatrix of $B_{\ell}^{N/2}$ containing only the first $m$ rows and columns. Then, 
			$$\fancym_{\ell} = \sum_{m=1}^{m_{N/2}}\ker B_{\ell}^{N/2}(m,m) \; \oplus \bigoplus_{k=\lceil N/2 \rceil}^N [\mathbb{R}^d]^k = \sum_{m=1}^{m_N}\ker B_{\ell}^{N/2}(m,m) \oplus \fancyg_N/\fancyg_{N/2}$$
			is a polarization for $\ell$.
		\end{itemize}
	\end{corollary}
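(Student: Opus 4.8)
The plan is to write down $\fancym_\ell$ explicitly in each parity case and verify three properties: (a) $\fancym_\ell$ is a Lie subalgebra of $\fancyg_N$; (b) $\fancym_\ell$ is subordinate to $\ell$, i.e.\ $\ell([X,Y])=0$ for all $X,Y\in\fancym_\ell$; and (c) $\dim\fancym_\ell=\dim\fancyg_N-\tfrac12\dim\mathcal O_\ell$. By Theorem~\ref{thm: vergne} (together with the standard symplectic bookkeeping on the quotient $\fancyg_N/\fancyg_N(\ell)$, where $\fancyg_N(\ell)=\{X:\ell([X,\fancyg_N])=0\}$), a polarization of $\ell$ is precisely a subordinate subalgebra of dimension $\dim\fancyg_N-\tfrac12\dim\mathcal O_\ell$, so (a)--(c) finish the proof. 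The target dimension is read off from the dimension formula~(\ref{eq: dimension formual for generic orbits}) of Theorem~\ref{thm: characterization of gen orbits} applied with $k=N-1$, $m=m_1$ (so that $\fancyg_N^*(N-k,m)=\{0\}$ and the left-hand side is $\dim\mathcal O_\ell$): using the values of $\dim(\cdot,\cdot)$ from Corollary~\ref{cor: max rank of blm}, namely $\dim(s,m_s)=\min\{m_s,m_{N-s}\}=m_s$ for $s<N/2$ (valid since $\fancyg_N\neq\fancyg_3(\R^2)$), $\dim(N-1,m_1)=m_1$, and $\dim(N/2,m_{N/2})=2\lfloor m_{N/2}/2\rfloor$, one obtains $\dim\mathcal O_\ell=2\sum_{s<\lceil N/2\rceil}m_s$ when $N$ is odd and $\dim\mathcal O_\ell=2\sum_{s=1}^{N/2-1}m_s+2\lfloor m_{N/2}/2\rfloor$ when $N$ is even. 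Hence the polarization dimension equals $\sum_{k\ge\lceil N/2\rceil}m_k$ for $N$ odd and $\lceil m_{N/2}/2\rceil+\sum_{k>N/2}m_k$ for $N$ even, which is exactly $\dim\fancym_\ell$ in the two cases.

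For $N$ odd the verification of (a)--(c) is immediate: $\fancym_\ell=\bigoplus_{k=\lceil N/2\rceil}^N[\R^d]^k$ is abelian, since a bracket of two of its elements lies in a layer of degree $\ge 2\lceil N/2\rceil=N+1$ and hence vanishes in $\fancyg_N$. An abelian subspace is a subalgebra subordinate to any functional, and we computed $\dim\fancym_\ell=\sum_{k\ge\lceil N/2\rceil}m_k$, so $\fancym_\ell$ is a polarization.

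For $N$ even write $\fancym_\ell=S\oplus U$ with $U=\bigoplus_{k=N/2+1}^N[\R^d]^k=\fancyg_N/\fancyg_{N/2}$ and $S=\sum_{m=1}^{m_{N/2}}\ker B_\ell^{N/2}(m,m)\subseteq[\R^d]^{N/2}$. By the grading, $[U,U]=[S,U]=0$ and $[S,S]\subseteq[\R^d]^N\subseteq U$, so $\fancym_\ell$ is a subalgebra. For subordinacy it suffices to check $\ell([S,S])=0$: if $u\in\ker B_\ell^{N/2}(a,a)$ and $v\in\ker B_\ell^{N/2}(b,b)$ with $a\le b$, then $u\in\R\text{-span}\{X_1^{N/2},\dots,X_a^{N/2}\}$ while $\ell([X_i^{N/2},v])=0$ for all $i\le b$, so $\ell([u,v])=\sum_{i\le a}u_i\,\ell([X_i^{N/2},v])=0$. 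Thus (a) and (b) hold, and everything comes down to the dimension count $\dim S=\lceil m_{N/2}/2\rceil$.

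This last point is the main obstacle. Let $\beta_\ell(u,v)=\ell([u,v])$ be the skew form on $W:=[\R^d]^{N/2}$ with matrix $B_\ell^{N/2}$ in the basis $X_1^{N/2},\dots,X_{m_{N/2}}^{N/2}$, and put $W_m=\R\text{-span}\{X_1^{N/2},\dots,X_m^{N/2}\}$, so $\ker B_\ell^{N/2}(m,m)=\operatorname{rad}(\beta_\ell|_{W_m})$ and $S=\sum_m\operatorname{rad}(\beta_\ell|_{W_m})$. Passing from $W_{m-1}$ to $W_m$ borders the skew matrix $B_\ell^{N/2}(m-1,m-1)$ by one column and minus its transpose, so its rank increases by $0$ or by $2$; hence $\dim\operatorname{rad}(\beta_\ell|_{W_m})=m-\rank B_\ell^{N/2}(m,m)$ changes by exactly $+1$ or $-1$ at each step. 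A short computation (clearing the new column against the columns of $B_\ell^{N/2}(m-1,m-1)$) shows that at a $+1$ step $\operatorname{rad}(\beta_\ell|_{W_m})=\operatorname{rad}(\beta_\ell|_{W_{m-1}})\oplus\R v_m$ with $v_m$ having nonzero $X_m^{N/2}$-coordinate, so the running sum $\sum_{m'\le m}\operatorname{rad}(\beta_\ell|_{W_{m'}})$ genuinely gains a dimension, whereas at a $-1$ step $\operatorname{rad}(\beta_\ell|_{W_m})\subseteq\operatorname{rad}(\beta_\ell|_{W_{m-1}})$ and nothing is added. Therefore $\dim S$ equals the number of $+1$ steps, which is $m_{N/2}-\tfrac12\rank\beta_\ell$. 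Since $\ell$ is in general position, condition~iii) of Theorem~\ref{thm: characterization of gen orbits} gives that $\rank B_\ell^{N/2}$ is maximal, i.e.\ equal to $2\lfloor m_{N/2}/2\rfloor$, so $\dim S=m_{N/2}-\lfloor m_{N/2}/2\rfloor=\lceil m_{N/2}/2\rceil$, as needed. (Equivalently, $S$ is the Vergne polarization of the two-step Heisenberg-type algebra $[\R^d]^{N/2}\oplus[\R^d]^N$ with respect to the restriction of $\ell$, so this is a direct instance of Theorem~\ref{thm: vergne}.) With (a), (b), (c) established, $\fancym_\ell$ is a subordinate subalgebra of maximal dimension, hence a polarization, in both parity cases.
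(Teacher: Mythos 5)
Your proposal is correct, but it follows a genuinely different route from the paper's. The paper proves the corollary by showing that $\fancym_{\ell}$ \emph{is} the Vergne polarization $\tilde{\fancym}_{\ell}$ of Theorem \ref{thm: vergne} attached to the ideal chain $\mathfrak{i}^{(k,n)}$ coming from the strong Malcev basis: the layers above $N/2$ lie in every radical $r(\ell_{(k,n)})$, general position (full column rank of $B_{\ell}^{N-k}(m_k)$, via Theorem \ref{thm: characterization of gen orbits}) rules out any radical element with a nonzero component in a layer $k<N/2$, and for $N$ even a direct computation gives $r(\ell_{(N/2,m)})=\ker B_{\ell}^{N/2}(m,m)\oplus\bigoplus_{k>N/2}[\R^d]^k$. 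You instead verify directly that $\fancym_{\ell}$ is a subordinate subalgebra (immediate from the grading together with your kernel computation for $[S,S]$, which is correct) and then match its dimension against $\dim\fancyg_N-\tfrac12\dim\mathcal{O}_{\ell}$, reading $\dim\mathcal{O}_{\ell}$ off formula (\ref{eq: dimension formual for generic orbits}) with $k=N-1$, $m=m_1$; your flag argument for $\dim S=\lceil m_{N/2}/2\rceil$ (rank of the bordered skew matrix jumps by $0$ or $2$, the radical dimension by $\pm1$, and exactly at the $+1$ steps a radical vector with nonzero $X_m^{N/2}$-coordinate appears while at $-1$ steps the radical shrinks into the previous one) is a hands-on proof of the linear-algebra core of Vergne's lemma and checks out. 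What your route buys is an explicit count of the generic orbit and polarization dimensions and independence from the particular ideal chain; what it costs is reliance on a standard fact the paper never states, namely that $\dim\mathcal{O}_{\ell}=\operatorname{rank}B_{\ell}$ for $B_{\ell}(X,Y)=\ell([X,Y])$ on $\fancyg_N$, so that subordinate subalgebras are bounded by, and polarizations have exactly, dimension $\dim\fancyg_N-\tfrac12\dim\mathcal{O}_{\ell}$; this is standard Corwin--Greenleaf material but is external to the paper, whereas the paper's proof needs only Theorem \ref{thm: vergne} as cited. Two minor points: your parenthetical is slightly off, since the Vergne polarization of $[\R^d]^{N/2}\oplus[\R^d]^N$ is $S\oplus[\R^d]^N$ rather than $S$ itself; and you (correctly) read the summand $\bigoplus_{k=\lceil N/2\rceil}^N[\R^d]^k$ in part ii) as $\bigoplus_{k=N/2+1}^N[\R^d]^k$, consistent with the identification $\fancyg_N/\fancyg_{N/2}$ and required for isotropy.
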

	\begin{proof}
		Let $\{X_1^N, \ldots, X_{m_N}^N, \ldots,X_1^1, \ldots, X_{m_1}^1\}$ be a our usual strong Malcev basis for $\fancyg_N$ and let $\mathfrak{i}^{(k,n)} = \mathbb{R}$-span$\{X_1^N, \ldots, X_n^k\}$. Then, by definition, the sequence 
		\[(0) \subseteq \mathfrak{i}^{(N,1)} \subseteq \ldots \subseteq \mathfrak{i}^{(N,m_N)} \subseteq \mathfrak{i}^{(N-1,1)} \subseteq \ldots \subseteq \mathfrak{i}^{(1,m_1)}\]
		is a sequence of ideals as in Theorem \ref{thm: vergne}.
		We prove the statement by showing that the subalgebra $\fancym_{\ell}$ above coincides with the polarization constructed in Theorem \ref{thm: vergne}, which we denote by $\tilde{\fancym}_{\ell}$.\\
		First, note that for all $k_1,k_2>N/2$,  $[\mathfrak{i}^{(k_1,n_1)},\mathfrak{i}^{(k_2,n_2)}] = 0$ for any $n_1,n_2$. In particular
		$$r(\ell_{(k,n)})\coloneqq \left\{Y \in \mathfrak{i}^{(k,n)} \; : \; \ell([X,Y]) = 0 \text{ for all } X \in \mathfrak{i}^{(k,n)}\right\}= \mathfrak{i}^{(k,n)}$$ 
		for all $k>N/2$, so that 
		$$\fancym_{\ell} = \bigoplus_{k=\lceil N/2 \rceil}^N [\mathbb{R}^d]^k \subseteq \tilde{\fancym}_{\ell} \;.$$
		We now show that $\tilde{\fancym}_{\ell} \subseteq \fancym_{\ell}$. \\
		Let $X = X^1+ \ldots +X^{N}$ with $X^i \in [\mathbb{R}^d]^i$ for all $i$ and such that $X^i \neq 0$ for at least one index $i <N/2$. Let $k$ be the first index such that $X^k \neq 0$, so that in particular $X = X^k+ \ldots +X^{N}$.\\ Assume there is an index $(s,n)$ such that $X \in r(\ell_{(s,n)})$. 
		Then, since $s$ must be smaller than $N/2$, $[\mathbb{R}^d]^{N-k} \subseteq \mathfrak{i}^{(s,n)}$ so that, in particular, $X$ must satisfy $\ell([X_i^{N-k},X]) = 0$ for all $i = 1, \ldots, m_{N-k}$. Since $X^k \in \rdbracket^k$ there are some $\alpha_1,\ldots, \alpha_{m_k} \in \mathbb{R}$ such that $X^k = \sum_{j} \alpha_jX_j^k$. Hence, we obtain that for all $i = 1, \ldots, m_{N-k}$
		\[0 = \ell([X_i^{N-k},X]) = \ell([X_i^{N-k},X^k]) =  \sum_{j=1}^{m_k}\alpha_j \ell([X_i^{N-k},X_j^k])\;.\]
		In particular, we have that 
		\[0 = B_{\ell}^{N-k}(m_k) \cdot
		\begin{pmatrix}
			\alpha_1 \\
			\alpha_2 \\
			\vdots \\
			\alpha_{m_k}
		\end{pmatrix}
		\]
		However, by Theorem \ref{thm: characterization of gen orbits} we know that, since $\ell$ is in general position $B_{\ell}^{N-k}(m_k)$ has rank $m_k$. But then we must have $\alpha_i = 0$ for all $i$, so that $X^k=0$, which is a contradiction to the choice of $X^k$. Hence,
		\[\fancym_{\ell} \subseteq \bigoplus_{k \geq N/2} [\mathbb{R}^d]^k \;.\]
		If $N$ is odd this already concludes the proof. Set $V_m = \mathbb{R}$-span$\{X_1^{N/2}, \ldots, X_m^{N/2}\}$. The case $N/2 \in \mathbb{N}$ follows by noticing that for any index $m \in \{1, \ldots, m_{N/2}\}$
		\begin{align*}
			r(\ell_{(N/2,m)})&= \left\{Y \in \mathfrak{i}^{(N/2,m)} \; : \; \ell([X,Y]) = 0 \text{ for all } X \in \mathfrak{i}^{(N/2,m)}\right\}\\
			&= \left\{Y \in V_m \; : \; \ell([X,Y]) = 0 \text{ for all } X \in V_m \right\} \oplus  \bigoplus_{k > N/2} [\mathbb{R}^d]^k\\
			&= \left\{Y \in V_m \; : \; X^T B_{\ell}^{N/2}(m,m)Y = 0 \text{ for all } X \in V_m \right\} \oplus  \bigoplus_{k > N/2} [\mathbb{R}^d]^k\\
			&= \left\{Y \in V_m \; : \;B_{\ell}^{N/2}(m,m)Y =0 \right\} \oplus  \bigoplus_{k > N/2} [\mathbb{R}^d]^k\\
			&= \ker B_{\ell}^{N/2}(m,m) \oplus  \bigoplus_{k > N/2} [\mathbb{R}^d]^k
		\end{align*}
	\end{proof}
	
	%
	\begin{remark}
		Note that if $N$ is odd, the polarization $\fancym_{\ell}$ provided by Corollary \ref{cor: explicit polarization} is independent of the functional $\ell$. Hence, the representation $\pi_{\ell}$ can be induced from the exact same subgroup for all functionals in general position. On the other hand, if $N$ is even, the polarization of two functionals in general position only differs by a subset of $\rdbracket^{N/2}$.
	\end{remark}
	Note that the only properties of the algebra $\fancyg_N$ we used in the proofs of this section are that $\dim p_k(\fancyg_N) \leq \dim p_{N-k}(\fancyg_N)$ for any $k \leq N/2$ and that $[v_k,w_{N-k}] \neq 0$ for any $v_k \in p_k(\fancyg_N)$ , $w_{N-k} \in p_{N-k}(\fancyg_N)$ with $v_k \neq \pm w_{N-k}$. In particular, the exact same proofs also hold for any graded Lie group\footnote{See definition \ref{def: graded}.} satisfying these properties. We therefore obtain the following slightly more general version of the results presented in this section.
	\begin{corollary}\label{cor: generalization of results 1}
		Let \( G \) be an \( N \)-step graded Lie group with Lie algebra \( \fancyg \), and suppose that for all \( i \leq j \) with \( i + j = N \), the following conditions hold:
		\begin{itemize}
			\item[i)] \( \dim V_i \leq \dim V_j \),
			\item[ii)] \( [v_i, w_j] \neq 0 \) for all \( v_i \in V_i \setminus \{0\} \), \( w_j \in V_j \setminus \{0\} \) with \( v_i \neq \pm w_j \).
		\end{itemize}
		Then, Proposition \ref{prop: existence of invertible matrix}, Corollary \ref{cor: max rank of blm}, Theorem \ref{thm: characterization of gen orbits}, Remark \ref{rem: general dimensions jump} and Corollary \ref{cor: explicit polarization}  remain valid if $G_N$ and $\fancyg_N$ are replaced by $G$ and $\fancyg$ respectively. In the case of Corollary \ref{cor: explicit polarization} one must additionally replace $\rdbracket^k$ with $p_k(\fancyg)$, i.e. the $k$-th layer of $\fancyg$.
	\end{corollary}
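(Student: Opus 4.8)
The plan is to observe that the whole development of Section~\ref{sec: orbits in general position} rests on exactly three inputs, and then to audit each result in turn, confirming that nothing else particular to the free nilpotent algebra is ever used. The first input is the grading itself: writing $\fancyg = V_1 \oplus \cdots \oplus V_N$ with $[V_i,V_j]\subseteq V_{i+j}$ and $[V_i,V_j]=0$ when $i+j>N$, so that every iterated bracket reaching total degree $>N$ vanishes. The second and third inputs are precisely hypotheses~i) and~ii): $\dim V_i \leq \dim V_j$ for $i+j=N$, $i\leq j$, and the non-degeneracy $[v_i,w_j]\neq 0$ for $v_i\in V_i\setminus\{0\}$, $w_j\in V_j\setminus\{0\}$ with $v_i\neq\pm w_j$. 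In the free case these entered implicitly through the Witt dimension count and through the fact that the centralizer of a nonzero homogeneous element of a free Lie algebra is one-dimensional; in the general setting one simply substitutes i) and ii) at each such point. So the proof amounts to re-reading each statement with $\fancyg_N$ replaced by $\fancyg$, $G_N$ by $G$, and $\rdbracket^k$ by $V_k$, flagging where i), ii) or the grading enter.

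First I would revisit Proposition~\ref{prop: existence of invertible matrix}. Its only algebraic ingredient is the opening observation that for $k\neq N/2$ the brackets $[X_i^k,X_{j_1}^{N-k}],\dots,[X_i^k,X_{j_n}^{N-k}]$ are linearly independent, i.e.\ that $\operatorname{ad}(X_i^k)$ is injective on $V_{N-k}$; this follows from ii), since for $k\neq N-k$ a nonzero element of $V_{N-k}$ is automatically not $\pm X_i^k$, so the $\pm$ caveat does not bite. The remainder — the inductive Gaussian elimination yielding a nonzero diagonal entry, the skew-symmetry of $B_\ell^{N/2}$ for even $N$ and the ``$m_{N/2}-1$'' rank statement, and the product-of-nonzero-polynomials argument — is elementary and algebra-independent; for the $k=N/2$ step one again uses ii), now in the harmless form that distinct basis vectors of $V_{N/2}$ are not $\pm$ one another, so their bracket is nonzero. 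Corollary~\ref{cor: max rank of blm} then follows from Proposition~\ref{prop: existence of invertible matrix} purely through the identity $B_\ell^{N-k}(m)=-\big(B_\ell^{k}(m,m_{N-k})\big)^T$ and submatrix rank bounds, with i) supplying the inequality $m_k\leq m_{N-k}$ used in its part~i); both carry over verbatim.

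Next I would go through Theorem~\ref{thm: characterization of gen orbits}. The coadjoint computation expands $(\Adjoint\exp(-X))X_i^{N-k}$, and its two decisive features — that the exponential series terminates, and that the tail $\sum_{n\geq 0}\tfrac{1}{(n+2)!}(\operatorname{ad}(-X))^n[[X_i^{N-k},X],X]$ together with the coefficients $r_i^{N-s}$ depends only on the $g_j^s$ with $s\leq k-1$ — are consequences of the grading alone, since inserting an index $\geq k$ into $[[X_i^{N-k},\cdot],\cdot]$ produces total degree $\geq N+1$. The splitting $\mathcal{O}_\ell/\fancyg^*(N-k,m)=W_\ell^{k-1}+V_\ell^k$ and the induction on $k$ then use only the rank facts already transferred, with the case split on whether $k<N/2$ or $k>N/2$ invoking $\dim V_s\leq\dim V_{N-s}$, i.e.\ i). Note that i) also excludes the analogue of the omitted algebra $\fancyg_3(\R^2)$, where $\dim V_1=2>1=\dim V_2$ violates i); hence no separate exceptional case is needed, and formula~(\ref{eq: dimension formual for generic orbits}) holds with $\dim(k,m)$ read off from the layer dimensions of $\fancyg$. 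Remark~\ref{rem: general dimensions jump} is a verbatim reformulation of this formula. Finally, Corollary~\ref{cor: explicit polarization} combines Theorem~\ref{thm: vergne} — Vergne's canonical polarization attached to a flag of ideals, a general fact about nilpotent Lie algebras — with two observations: $[\mathfrak{i}^{(k_1,n_1)},\mathfrak{i}^{(k_2,n_2)}]=0$ for $k_1,k_2>N/2$ (grading), and the fact that a vector $X=X^k+\cdots+X^N$ with $X^k\neq 0$ and $k<N/2$ cannot satisfy $\ell([X_i^{N-k},X])=0$ for all $i$, because the coordinate vector of $X^k$ would then lie in $\ker B_\ell^{N-k}(m_k)$, contradicting the full rank guaranteed by the (now general) Theorem~\ref{thm: characterization of gen orbits}. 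Replacing $\rdbracket^k$ by $V_k$ throughout and reading $V_m=\R\text{-span}\{X_1^{N/2},\dots,X_m^{N/2}\}\subseteq V_{N/2}$, the identification of $r(\ell_{(N/2,m)})$ with $\ker B_\ell^{N/2}(m,m)\oplus\bigoplus_{k>N/2}V_k$ goes through unchanged. The hard part here is not conceptual but a matter of discipline: making sure that every place where the original proofs silently used a centralizer/injectivity property of the free Lie algebra is rewritten so as to invoke ii) with the ``$v_i\neq\pm w_j$'' restriction respected — which is exactly why the $k=N/2$ blocks must be handled through skew-symmetry rather than through injectivity of $\operatorname{ad}$ — and confirming that the Hall-basis terminology and the relation $[\rdbracket^k,\rdbracket^{N-k}]\subseteq\rdbracket^N$, as used in the text, never amount to more than the graded bracket relations.
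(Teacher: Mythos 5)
Your proposal is correct and follows essentially the same route as the paper, whose justification is precisely the observation (stated in the paragraph preceding the corollary) that the only properties of $\fancyg_N$ used in Section \ref{sec: orbits in general position} are the grading, the dimension inequality i), and the non-degeneracy ii), so the proofs carry over verbatim with $\rdbracket^k$ replaced by the layers $V_k$. Your more detailed audit of where each hypothesis enters (injectivity of $\operatorname{ad}$ on complementary layers, the $m_k\leq m_{N-k}$ inequality, the skew-symmetric $N/2$ block, and the exclusion of the $\fancyg_3(\R^2)$-type degenerate case by i)) is accurate and consistent with the paper's argument.
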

	\begin{example}
		The group $T_1^N\left(\R^d\right)$ generated by the tensor algebra $T_0^N\left(\R^d\right)$ defined in section $\ref{sec: signatures}$ satisfies the properties in the previous corollary.
	\end{example}
	\newpage
	\subsection{The Fourier inversion formula}
	
	Let $\pi_\ell: G_N \to \mathcal{F}_{\ell}$ be a unitary representation of $G_N$. Then, for any $f \in L^1(G_N)$ we define the continuous linear operator $\pi_\ell(f)$ on $\mathcal{F}_{\ell}$ by
	\[\pi_\ell(f) = \int_{G_N} f(x) \pi_\ell(x) \;dx \;.\]
	Note that this operator is in general only weakly defined. That is, for any $u \in \mathcal{F}_{\ell}$ the value of $\pi_\ell(f)u$ is specified by its inner product with some arbitrary $v \in \mathcal{F}_{\ell}$, specifically
	\[\langle \pi_\ell(f)u,v\rangle = \int_{G_N} f(x)\langle \pi_\ell(x)u,v\rangle \;dx\;.\]
	One can easily see that $\langle \pi_\ell(f)u,v\rangle \leq ||f||_1||u||||v||$ for any $u,v \in \mathcal{F}_{\ell}$ so that $\pi_\ell(f)$ is in fact bounded with operator norm $||\pi_\ell(f)|| \leq ||f||_1$. The idea for generalizing the concept of a Fourier transform to the group of signatures is to consider operators of the form $\pi_\ell(f)$ for unitary irreducible representations $\pi_\ell$ of $G_N$.\\
	Recall from Remark \ref{rem: characterization of dual} that the unitary dual of $G_N$ can be written as
	\[\hat{G}_N = \left\{[\ell] \;|\; \ell \in \fancyg_N^*\right\} \;,\]
	where $[\ell]$ is the equivalence class determined by the relation $\ell \cong \ell' :\iff \mathcal{O}_{\ell} = \mathcal{O}_{\ell'}$. Using this characterization, we define the Fourier transform as a function on ${G}_N$ as follows.
	\begin{definition}\label{def: fourier trafo}
		Let $f \in L^1(G_N)$. Then, we define the Fourier transform of $f$ as the operator valued mapping
		\[\hat{f}: [\ell] \mapsto \hat{f}(\ell) = \pi_{\ell}(f) = \int_G f(x) \pi_{\ell}(x) \;dx \quad \text{for all } [\ell] \in \hat{G} \;.\]
	\end{definition}
	\begin{remark}
		Recall that the representation $\pi_{\ell}$ was defined in the previous section as the representation induced by the functional $\ell$ \textit{up to equivalence}. Since we know that if $[\ell] = [\ell']$ then $\pi_{\ell} \cong \pi_{\ell'}$, the map in the previous definition is in fact well defined.
	\end{remark}
		
		Let $\{X_{1}, \ldots, X_{m}\}$ be a strong Malcev basis of $\fancyg_N$. For $j \in \{1, \ldots, m\}$ we let $d_j$ be the maximal dimension of orbits in $\fancyg^*/\fancyg^*(j)$. Where $d_m$ is the maximal dimension of orbits in $\fancyg_N^*$.
		Let $S = \{j_1, \ldots, j_{2k}\}$ with $j_1< \ldots< j_{2k}$ be the set of indices $j$ such that $d_j <d_{j-1}$ i.e. $S$ is the set of indices $j$ at which the maximal dimension of orbits in $\fancyg_N^*/\fancyg_N^*(j)$ increases (by one). We define $(\fancyg^*)_S = \mathbb{R}$-$\operatorname{span}\{\ell_i \;:\; i \in S\}$ and $(\fancyg^*)_T = \mathbb{R}$-$\operatorname{span}\{\ell_i \;:\; i \in T\}$ with $T =  \{1, \ldots, m\}\backslash S$. Note that $(\fancyg^*)_S \times (\fancyg^*)_T = \fancyg_N^*$.
		\begin{remark}\label{rem: choice of representant}
			The reason for the interest in the set $S$ defined above is that it allows a canonical choice of representative for the equivalence classes of functionals in general position in $\hat{G}_N = \left\{[\ell] \;|\; \ell \in \fancyg_N^*\right\}$. Specifically, if $\ell \in \fancyg_N^*$ is in general position, its orbit $\mathcal{O}_\ell$ intersects the subspace $\{0\} \times (\fancyg_N^*)_T$ in exactly one point \cite[Theorem~3.1.9]{corwin1990representations}. Hence, if $U \subseteq \hat{G}_N$ denotes the set of functionals in general position, we have that
			\begin{equation*}
				U \cap  \hat{G}_N \cong U \cap (\fancyg_N^*)_T \;.
			\end{equation*}
		\end{remark}
	
		\begin{theorem}{(General Fourier inversion formula)}\label{thm: fourier inversion}\\
			Let, $\{X_1, \ldots, X_m\}$ be an orthonormal strong Malcev basis of $\fancyg_N$ and $\{\ell_1, \ldots, \ell_{m}\}$ the corresponding dual basis. Let $U = \left\{(\alpha_{1}, \ldots, \alpha_{m}) \in \mathbb{R}^{m} \; | \; \ell = \sum_{i=1}^{m}\alpha_i \ell_i \text{ is in general position} \right\}$ be the set of functionals in general position and $S= \{j_1, \ldots, j_{2k}\}$ be as above. Further, define $D(\ell) \in \mathbb{R}^{2k \times 2k}$ as
			$$(D(\ell))_{i,s} = \ell([X_{j_i},X_{j_s}]) \quad \text{for } i,s \in \{1, \ldots, 2k\} \;.$$
			Then, for any Schwartz function $f \in \mathcal{S}(G)$
			\[f(x) = \int_{U\cap (\fancyg_N^*)_T}\sqrt{\operatorname{det}D(\ell)}\operatorname{Tr}\left( \pi_{\ell}(x)^{-1}\pi_{\ell}(f)\right)dl \;,\]
			where $dl$ is the Lebesgue measure on $(\fancyg^*)_T = \mathbb{R}$-$\operatorname{span}\{\ell_i \;:\; i \in T\}$.
		\end{theorem}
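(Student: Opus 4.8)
The plan is to deduce this statement from the general Corwin--Greenleaf inversion theorem \cite[Theorem~4.3.9]{corwin1990representations} --- equivalently, from formula (\ref{eq: intor corwin inversion formula}) in the introduction --- by matching the abstract data appearing there (a strong Malcev basis, the jump-index set of generic orbits, the complementary index set, a cross-section of the generic orbits, and the Plancherel density given as a Pfaffian) with the explicit objects $S$, $T$, $D(\ell)$ and $U$ constructed in this subsection. In this approach the analytic content --- convergence of the integral, well-definedness of $\operatorname{Tr}(\pi_\ell(x)^{-1}\pi_\ell(f))$ for Schwartz $f$, and the identification of the Radon--Nikodym density --- is already contained in \cite{corwin1990representations}, so what remains is essentially bookkeeping, plus checking the hypotheses that depend on the specific structure of $G_N$.

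Concretely, I would verify four points. \emph{(i) Nonemptiness and co-nullity of $U$:} that $U \neq \emptyset$ is Proposition~\ref{prop: existence of invertible matrix}, and by condition iii) of Theorem~\ref{thm: characterization of gen orbits} the complement $\fancyg_N^* \setminus U$ is the zero locus of a nonzero polynomial (a product of minors of the matrices $B_\ell^k$), hence Lebesgue-null; therefore the Plancherel measure of $\hat G_N$ is carried, up to a null set, by the generic coadjoint orbits, which is the regime covered by \cite[Theorem~4.3.9]{corwin1990representations}. \emph{(ii) The jump-index set:} the set $S$ defined just before the theorem --- the indices $j$ at which $\max_{\ell}\dim(\mathcal{O}_\ell/\fancyg_N^*(j))$ increases --- is by construction the generic jump-index set of \cite[\S3.1]{corwin1990representations}; its cardinality $\lvert S\rvert = \max_\ell \dim \mathcal{O}_\ell$ is even since coadjoint orbits are symplectic, which justifies writing $S = \{j_1, \ldots, j_{2k}\}$ (the explicit layerwise description of $S$ following from (\ref{eq: dimension formual for generic orbits}) and Remark~\ref{rem: general dimensions jump}, cf.\ Lemma~\ref{lem: S and T}, is not itself needed here). \emph{(iii) The cross-section:} by \cite[Theorem~3.1.9]{corwin1990representations}, for $\ell \in U$ the orbit $\mathcal{O}_\ell$ meets $\{0\}\times(\fancyg_N^*)_T$ in exactly one point, so $U \cap (\fancyg_N^*)_T$ with its Lebesgue measure $dl$ is the correct domain and transverse measure. \emph{(iv) The density:} the Plancherel density at $\ell$ is $\lvert\operatorname{Pf}M(\ell)\rvert$ for the skew-symmetric matrix $M(\ell) = (\ell([X_{j_i},X_{j_s}]))_{i,s=1}^{2k}$, which is literally $D(\ell)$; since $D(\ell)$ is skew-symmetric of even order, $\lvert\operatorname{Pf}D(\ell)\rvert = \sqrt{\det D(\ell)}$. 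Substituting (i)--(iv) into \cite[Theorem~4.3.9]{corwin1990representations} yields the asserted formula for every $f \in \mathcal{S}(G_N)$.

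The main obstacle I anticipate is not conceptual but a matter of reconciling normalization conventions: Corwin and Greenleaf run their Malcev basis "from the bottom up" through an increasing chain of ideals, whereas here the basis is listed top layer first, and their inversion formula a priori carries a constant depending on how the Haar measure on $G_N$, the Lebesgue measure on $\fancyg_N^*$, and the transverse measure $dl$ are pinned down. The hypothesis that the strong Malcev basis be \emph{orthonormal} is exactly what forces all these normalizations to be compatible and the constant to equal $1$, so that the density is precisely $\sqrt{\det D(\ell)}$ with no extra factor --- and this is the delicate part of the argument that must be checked carefully. A minor additional point is that $D(\ell)$ is invertible on $U$, so that $\sqrt{\det D(\ell)}$ is a positive real number: this holds because $D(\ell)$ is the matrix, in the jump-direction coordinates, of the Kirillov symplectic form on $\mathcal{O}_\ell$, which is nondegenerate precisely because the jump indices span a complement of the radical of the form $(X,Y)\mapsto\ell([X,Y])$.
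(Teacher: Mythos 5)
Your proposal follows the same route as the paper: the paper proves this theorem simply by citing \cite[Theorem~4.3.9]{corwin1990representations}, i.e.\ it is the general Corwin--Greenleaf inversion formula specialized to $G_N$, which is exactly your strategy. Your additional checks (co-nullity of $U$, the jump set $S$, the cross-section via \cite[Theorem~3.1.9]{corwin1990representations}, and the Pfaffian identification of $\sqrt{\det D(\ell)}$) merely spell out the bookkeeping the paper leaves implicit, so the two arguments coincide in substance.
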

		\begin{proof}
			See \cite[Theorem~4.3.9]{corwin1990representations}.
		\end{proof}
		\begin{corollary}{(Plancherel Theorem)}\label{cor: plancherel formula}
			With the notation of Theorem \ref{thm: fourier inversion} we have
			\[||f(x)||_2^2 = \int_{U\cap (\fancyg_N^*)_T}\sqrt{\operatorname{det}D(\ell)}\; ||\pi_{\ell}(f)||_{\text{HS}}^2 \;dl \;,\]
			where $|| \cdot||_2$ is the $L^2$ norm and $||\cdot||_{\text{HS}}$ is the Hilbert-Schmidt norm.
		\end{corollary}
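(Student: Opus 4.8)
The plan is to obtain the Plancherel identity as a direct consequence of the Fourier inversion formula of Theorem \ref{thm: fourier inversion}, by applying it to a convolution square and evaluating at the identity element $e \in G_N$. Concretely, for $f \in \mathcal{S}(G_N)$ I would set $f^*(x) \coloneqq \overline{f(x^{-1})}$ and $g \coloneqq f^* * f$, where $*$ is convolution with respect to the Haar measure $dx$. Two elementary facts make this work. First, $G_N$ is unimodular: by Theorem \ref{thm: construction of haar measure} the measure $dx$ is simultaneously left and right invariant, hence invariant under inversion. Second, $\mathcal{S}(G_N)$ is stable under the involution $f \mapsto f^*$ and under convolution, so $g \in \mathcal{S}(G_N)$ and Theorem \ref{thm: fourier inversion} applies to $g$.

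Granting this, the left-hand side is immediate from unimodularity:
\[
g(e) = \int_{G_N} f^*(y)\, f(y^{-1})\, dy = \int_{G_N} \overline{f(y^{-1})}\, f(y^{-1})\, dy = \int_{G_N} |f(y)|^2\, dy = \|f\|_2^2 \;.
\]
For the right-hand side I would use the standard algebraic properties of the operator-valued Fourier transform: $\pi_\ell(f_1 * f_2) = \pi_\ell(f_1)\pi_\ell(f_2)$ (by the substitution $x = yz$ in the defining integral) and $\pi_\ell(f^*) = \pi_\ell(f)^*$ (by inversion-invariance of $dx$ together with $\pi_\ell(y^{-1}) = \pi_\ell(y)^*$, since $\pi_\ell$ is unitary). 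Hence $\pi_\ell(g) = \pi_\ell(f)^*\pi_\ell(f)$. For $\ell$ in general position $\pi_\ell(f)$ is a Hilbert--Schmidt operator on $\mathcal{F}_\ell$, so $\pi_\ell(g)$ is trace-class and $\operatorname{Tr}\!\big(\pi_\ell(f)^*\pi_\ell(f)\big) = \|\pi_\ell(f)\|_{\text{HS}}^2$.

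Finally I would evaluate the inversion formula for $g$ at $x = e$. Since $\pi_\ell(e)^{-1}$ is the identity operator on $\mathcal{F}_\ell$, the integrand becomes $\sqrt{\det D(\ell)}\,\operatorname{Tr}(\pi_\ell(g)) = \sqrt{\det D(\ell)}\,\|\pi_\ell(f)\|_{\text{HS}}^2$, so Theorem \ref{thm: fourier inversion} combined with the computation of $g(e)$ gives
\[
\|f\|_2^2 = g(e) = \int_{U \cap (\fancyg_N^*)_T} \sqrt{\det D(\ell)}\, \|\pi_\ell(f)\|_{\text{HS}}^2 \, dl \;,
\]
which is the asserted identity. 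The main obstacle — in fact essentially the only point needing justification beyond the bookkeeping above — is verifying that $g = f^* * f$ lies in $\mathcal{S}(G_N)$ and that $\pi_\ell(f)$ is Hilbert--Schmidt for $\ell$ in general position, so that the right-hand integrand is well defined a.e.\ in $\ell$. Both are classical in the harmonic analysis of simply connected nilpotent Lie groups: stability of the Schwartz class under convolution follows from the polynomiality of the group law in exponential coordinates, and the Hilbert--Schmidt (indeed trace-class) property belongs to the same circle of results underlying Theorem \ref{thm: fourier inversion} itself. Everything else is a formal manipulation of the inversion formula.
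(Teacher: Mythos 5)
Your proposal is correct. The paper itself gives no argument for this corollary: it is stated immediately after Theorem \ref{thm: fourier inversion}, whose proof is a citation of \cite[Theorem~4.3.9]{corwin1990representations}, and in that reference the Plancherel identity is part of the same theorem, so the paper in effect treats the corollary as already contained in the cited result. What you supply is the classical derivation of Plancherel from inversion: apply the inversion formula to $g = f^* * f$ at $x = e$, using unimodularity (Theorem \ref{thm: construction of haar measure} gives a bi-invariant, hence inversion-invariant, Haar measure) to get $g(e) = \|f\|_2^2$, and the identities $\pi_\ell(f_1 * f_2) = \pi_\ell(f_1)\pi_\ell(f_2)$, $\pi_\ell(f^*) = \pi_\ell(f)^*$, $\operatorname{Tr}\bigl(\pi_\ell(f)^*\pi_\ell(f)\bigr) = \|\pi_\ell(f)\|_{\text{HS}}^2$ to identify the integrand. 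The two auxiliary facts you flag are indeed the only nontrivial inputs, and both are standard for connected, simply connected nilpotent Lie groups: $\mathcal{S}(G_N)$ is stable under $f \mapsto f^*$ and under convolution (the group law is polynomial in exponential coordinates), and $\pi_\ell(f)$ is trace-class, hence Hilbert--Schmidt, for Schwartz $f$ --- the latter is exactly what Remark \ref{rem: kernel of fourier transform} records via \cite[Proposition~4.2.2]{corwin1990representations}. So your argument is a legitimate self-contained proof where the paper relies on citation; the only cost is that it quietly re-imports part of the machinery behind the cited theorem (the trace-class statement), which is unavoidable for any proof along these lines.
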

		
		By Theorem \ref{thm: fourier inversion} we see that the key elements to compute the Fourier inversion formula are the set $U$ of orbits in general position and the set $S$ of jump indices. We already characterized the orbits in general position in the previous section and, with the theory developed there, it is now easy to also describe the sets $S$ and $T$ explicitly for general $N$ and $d$.
		\begin{lemma}\label{lem: S and T}
			Let $\{X_{1}^N, \ldots, X_{m_{N}}^N, \ldots, X_1^1, \ldots, X_{m_1}^1\}$ be a strong Malcev basis of $\fancyg_N({\mathbb{R}^d}) \neq \fancyg_3(\R^2)$. Then, with the notation of Theorem \ref{thm: fourier inversion}
			\begin{align*}
				S &=\bigcup_{k=1}^{N-1}\left\{(k,1), \ldots, (k,\dim(k,m_k))\right\},
			\end{align*}
			where the index $(k,i)$ corresponds to the basis element $l_i^k$, and $\dim(k,m_k)$ is defined as in Corollary \ref{cor: max rank of blm}.
			In particular, 
			\begin{equation*}
				T = 
                \begin{cases}
                \;\displaystyle\bigcup_{k > N/2}^N \left\{(k, m_{N-k}+1), \ldots, (k, m_k)\right\}, 
                & \text{if } N/2 \notin \mathbb{N} \text{ or } m_{N/2} \in 2\mathbb{N}, \\[1.5ex]
                \;\displaystyle\bigcup_{k > N/2}^N \left\{(k, m_{N-k}+1), \ldots, (k, m_k)\right\} \cup \left\{(N/2, m_{N/2})\right\}, 
                & \text{if } m_{N/2} \in 2\mathbb{N} + 1.
                \end{cases}
			\end{equation*}
			Further, the set of orbits in general position of $g_N$ is given by
			\[U = \left\{\ell \in \fancyg_{N}^* \;|\, \operatorname{rank} B_{\ell}^k = \dim(k,m_k) \text{ for all } k \in \{1, \ldots,\lfloor N/2\rfloor\}\right\}\;.\]
		\end{lemma}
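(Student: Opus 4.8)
The plan is to extract $S$, $T$ and the description of $U$ directly from Theorem~\ref{thm: characterization of gen orbits} and its dimension formula~(\ref{eq: dimension formual for generic orbits}), combined with the behaviour of generic dimensions recorded in Remark~\ref{rem: general dimensions jump} and the values $\dim(k,m)$ from Corollary~\ref{cor: max rank of blm}. First I would observe that the number $d_j$ --- the maximal dimension of a coadjoint orbit in $\fancyg_N^*/\fancyg_N^*(j)$ --- is, for \emph{every} index $j$ simultaneously, attained by any $\ell$ in general position, and such an $\ell$ exists by Proposition~\ref{prop: existence of invertible matrix}. Hence $d_j$ can be computed from~(\ref{eq: dimension formual for generic orbits}). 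If $j$ indexes a dual basis vector of the top layer, say $\ell_i^N$, then $\rdbracket^N$ is central in $\fancyg_N$ (brackets with it land in degree $>N$), so the orbit of any $\ell$ projects to a single point in $\R$-span$\{\ell_1^N,\dots,\ell_{m_N}^N\}$ and $d_j=0$; this is why the top layer produces no jump index and why the union defining $S$ runs only over $k\in\{1,\dots,N-1\}$.

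Next I would fix a layer $k\in\{1,\dots,N-1\}$ and read off which of its indices are jump indices. Matching conventions (the layer called $N-k$ in Theorem~\ref{thm: characterization of gen orbits} plays the role of layer $k$ in the statement here), the generic dimension in $\fancyg_N^*/\fancyg_N^*(k,m)$ equals $\sum_{s=1}^{N-k-1}\dim(s,m_s)+\dim(N-k,m)$, so inside the layer only the summand $\dim(N-k,m)$ moves. By Corollary~\ref{cor: max rank of blm} this summand equals $\min\{m_k,m_{N-k},m\}$ if $k\neq N/2$ and $\min\{m_{N/2},m\}$ (resp.\ $\min\{m_{N/2}-1,m\}$) if $k=N/2$ and $m_{N/2}$ is even (resp.\ odd); in all cases it strictly increases by one as $m$ runs over $1,\dots,\dim(k,m_k)$ and is then constant, with $\dim(k,m_k)$ the quantity of Corollary~\ref{cor: max rank of blm}. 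I would also check the transition from the last index of layer $k+1$ to the first index of layer $k$: there the ``$\sum$'' parts of the two dimensions differ by $\dim(N-k-1,m_{N-k-1})-\dim(N-k-1,m_{k+1})$, which vanishes because both terms equal $\min\{m_{N-k-1},m_{k+1}\}$, leaving a jump of exactly $\dim(N-k,1)=1$. Consequently the jump indices inside layer $k$ are precisely $(k,1),\dots,(k,\dim(k,m_k))$, which yields the asserted $S$ (and $\sum_{k=1}^{N-1}\dim(k,m_k)$ is even, as it must be, by the symmetry $\dim(k,m_k)=\dim(N-k,m_{N-k})$ together with $\dim(N/2,m_{N/2})$ being even).

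The complement $T$ then follows by a layer-by-layer inspection. The entire top layer $\{(N,1),\dots,(N,m_N)\}$ lies in $T$, matching the displayed formula once one sets $m_0=0$. For $k<N/2$ one has $\dim(k,m_k)=m_k$, so layer $k$ is wholly contained in $S$ and contributes nothing to $T$; for $k=N/2$ only the index $(N/2,m_{N/2})$ can be left over, and it lies in $T$ exactly when $m_{N/2}$ is odd; for $N/2<k\le N-1$ one has $\dim(k,m_k)=m_{N-k}$, so the indices $(k,m_{N-k}+1),\dots,(k,m_k)$ make up the contribution of layer $k$ to $T$. Collecting these cases gives the two displayed formulas for $T$.

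Finally, the description of $U$ is immediate from the equivalence of conditions~i) and~iii) in Theorem~\ref{thm: characterization of gen orbits}: $\ell$ is in general position if and only if $\rank B_\ell^k$ is maximal for every $k\in\{1,\dots,\lfloor N/2\rfloor\}$, i.e.\ $\rank B_\ell^k=\dim(k,m_k)$ (equivalently $B_\ell^k$ invertible when $k<N/2$, and $\rank B_\ell^{N/2}\in\{m_{N/2},m_{N/2}-1\}$ according to parity). The only real work is bookkeeping: keeping the two layer-indexing conventions aligned and handling the $k=N/2$ even/odd dichotomy uniformly; once the boundary-between-layers step above is in place, everything else is a direct count.
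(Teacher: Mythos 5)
Your proposal is correct and follows essentially the same route as the paper: the paper's proof of this lemma is literally a citation of Theorem~\ref{thm: characterization of gen orbits} and Remark~\ref{rem: general dimensions jump}, and your write-up is just the explicit bookkeeping (via Corollary~\ref{cor: max rank of blm} and the dimension formula~(\ref{eq: dimension formual for generic orbits})) that these references encode, including the correct equivalence $i)\iff iii)$ for the description of $U$. The only quibble is your parenthetical claim that the jump at the first index of a new layer is ``exactly $\dim(N-k,1)=1$'', which fails in the degenerate case $m_{N/2}=1$ odd (e.g.\ $\fancyg_4(\R^2)$) where $\dim(N/2,1)=0$; this does not affect your stated conclusion about $S$, $T$ and $U$.
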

		\begin{proof}
			This is precisely what we showed in Theorem \ref{thm: characterization of gen orbits} and Remark \ref{rem: general dimensions jump}.
		\end{proof}
		\begin{remark}\label{rem: dimension of S}
		From the previous lemma, we see that for any $k<N/2$ the set $S$ contains the indices of the first $m_k = \dim\left([\mathbb{R}^d]^k\right)$ basis elements of $[\mathbb{R}^d]^k$ and $[\mathbb{R}^d]^{N-k}$. Further if $N/2 \in \mathbb{N}$, $S$ also contains either all, or all but one indices of the basis elements of  $[\mathbb{R}^d]^{N/2}$. Hence, we obtain that
        \begin{equation*}
				(\fancyg_N^*)_T =
                    \begin{cases}
                    \displaystyle\bigoplus_{k > N/2}^N \R\text{-span}\{X_{m_{N-k}+1}^k, \ldots, X_{m_k}^k\}, 
                    & \text{if } N/2 \notin \mathbb{N} \text{ or } m_{N/2} \in 2\mathbb{N}, \\[1.5ex]
                    \displaystyle\bigoplus_{k > N/2}^N \R\text{-span}\{X_{m_{N-k}+1}^k, \ldots, X_{m_k}^k\} 
                    \oplus \R\text{-span}\{X_{m_{N/2}}^{N/2}\}, 
                    & \text{if } m_{N/2} \in 2\mathbb{N}+1.
                    \end{cases}
			\end{equation*}
		In particular, the number of indices in $S$ is given by
		\begin{equation*}\label{eq: number of element in S}
			|S| =
                \begin{cases}
                \displaystyle 2 \cdot \dim(\fancyg_{\lfloor N/2 \rfloor}), 
                & \text{if } N/2 \notin \mathbb{N} \text{ or } m_{N/2} \in 2\mathbb{N}, \\[1.2ex]
                \displaystyle 2 \cdot \dim(\fancyg_{N/2}) - 1, 
                & \text{if } m_{N/2} \in 2\mathbb{N} + 1.
                \end{cases}
		\end{equation*}
        Using Remark \ref{rem: choice of representant}, the Fourier transform can now be simply interpreted as a function defined on the subspace $(\fancyg_N^*)_T$.
        \end{remark}
		\begin{remark}
			Note that the set $(\fancyg_N^*)\backslash U$ is a $0$-measure set with respect to the Lebesgue measure. Hence, by setting $n \coloneqq |T| = \dim(\fancyg_{N})-|S|$ the inversion theorem can also be written as
			\[f(x) = \int_{\mathbb{R}^n}\sqrt{\operatorname{det}D(\ell(\alpha))}\operatorname{Tr}\left( \pi_{\ell(\alpha)}(x)^{-1}\pi_{\ell(\alpha)}(f)\right)d(\alpha_{j}^k)_{(k,j)\in T} \;,\]
			where $\ell(\alpha)= \sum_{(k,j)\in T}\alpha_j^kl_j^k$. In particular
			\[||f||_2^2 = \int_{\mathbb{R}^n}\sqrt{\operatorname{det}D(\ell(\alpha))}||\;\pi_{\ell(\alpha)}(f)||_{HS}^2\; d(\alpha_{j}^k)_{(k,j)\in T} \;.\]
		\end{remark}
		\begin{remark}\label{rem: generalization of results 2}
			Note that since the statements in this section follow directly from the results established in the previous section, they also extend to the class of graded Lie groups described in Corollary \ref{cor: generalization of results 1}.
		\end{remark}
		
		\begin{remark}\label{rem: kernel of fourier transform}
			It can be shown $-$ see \cite[Proposition~4.2.2 in]{corwin1990representations} $-$ that for any $f \in \mathcal{S}(G_N)$ the operator $\pi_{\ell}(f)$ is a trace class integral operator with kernel
			\[K_{f}: G_N/{H_{\ell}} \times G_N/{H_{\ell}} \to \C, \quad K_f(x,y) = \int_{H_{\ell}} f(x u y^{-1})e^{il(\log(u))} du\]
			where $H_{\ell} = \exp(\fancyh_{\ell})$ and $\fancyh_{\ell}$ is a polarization of $\ell$. Accordingly, the action of $\pi_{\ell}(f)$ on a function $\phi \in \mathcal{F}_{\ell}$ is given, up to right-multiplication with elements of $H_\ell$, by
			\[(\pi_{\ell}(f)\phi)(x) = \int_{G_N/H_{\ell}}K_f(x,y)\phi(y) dy\]
			This expression is useful for two main reasons. First, since the operator is completely determined by its kernel, it allows us to identify the Fourier transform of $f$ with a complex-valued function, instead of a linear operator on some function space. Second, it provides a method for computing the trace of $\pi_{\ell}(x)^{-1}\pi_{\ell}(f)$, since the trace of such an integral operator is given by 
			\[\operatorname{Tr} \pi_{\ell}(x)^{-1}\pi_{\ell}(f) = \operatorname{Tr}\pi_{\ell}(L_{x^{-1}}f) =  \int_{G_N/H_{\ell}}K_{(L_{x^{-1}}f)}(y,y) dy =  \int_{G_N/H_{\ell}}K_f(xy,y) dy\;.\]
			Note that Corollary \ref{cor: explicit polarization} provides an explicit construction of a polarization $\fancyh_{\ell}$, so that all integrals above are in fact computable.
		\end{remark}
		
		The Plancherel formula presented in Corollary \ref{cor: plancherel formula} implicitly shows that for any Schwartz function $f \in \mathcal{S}(G_N)$ the operator $\pi_{\ell}(f)$ is Hilbert-Schmidt. Hence, by Theorem \ref{thm: fourier inversion} the map
		\[
		\mathcal{S}(G_N)\to \int_{\hat{G}}^{\oplus}HS(\mathcal{H}_{\ell})\; d\mu(\ell), \quad
		f \mapsto \int_{\hat{G}}^{\oplus}\pi_{\ell}(f)\; d\mu(\ell)
		\]
		is an isometry, where $\mu = \sqrt{B_{\ell}}dl$ and $HS(\mathcal{H}_{\ell})$ is the set of Hilbert-Schmidt operators on $\mathcal{H}_{\ell}$. Therefore, it extends to an isometry $L^2(G_N) \to \int_{\hat{G}}^{\oplus}HS(\mathcal{H}_{\ell})$. As the following result from \cite{corwin1990representations} shows, this map is surjective.
		\begin{theorem}
			Let $G$ be a nilpotent Lie group with Haar measure $dx$ and $\mu = \sqrt{D(\ell)}dl$ as in Theorem \ref{thm: fourier inversion}. Then, the map
			\[
			L^2(G_N)\to \int_{\hat{G}}^{\oplus}HS(\mathcal{H}_{\ell})\; d\mu(\ell)
			\]
			is a surjective isometry. 
		\end{theorem}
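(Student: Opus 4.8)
The plan is to leverage what is already established: by Corollary~\ref{cor: plancherel formula} the Fourier transform $\mathcal F\colon f\mapsto\int_{\hat G}^{\oplus}\pi_\ell(f)\,d\mu(\ell)$ is an isometry of $L^2(G_N)$ into $\int_{\hat G}^{\oplus}HS(\mathcal H_\ell)\,d\mu(\ell)$, and Theorem~\ref{thm: fourier inversion} provides an explicit candidate for a left inverse on Schwartz functions. An isometry out of a complete space has closed image, so surjectivity is equivalent to density of the image; it therefore suffices to show that a dense subspace of the target direct integral already lies in $\mathcal F\bigl(\mathcal S(G_N)\bigr)$. I would take that dense subspace to consist of the smooth, compactly supported fields of \emph{kernels}, exploiting the realization of the operators $\pi_\ell(f)$ as integral operators.

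Concretely, fix a strong Malcev basis $\{X_1,\dots,X_m\}$ of $\fancyg_N$ refining the flag underlying the polarization $\fancyh_\ell$ furnished by Corollary~\ref{cor: explicit polarization}, and use the diffeomorphism $\gamma$ of Proposition~\ref{prop: bijectivity of gamma} as a global chart on $G_N$; by Theorem~\ref{thm: alternative haar measure} it pushes Lebesgue measure to Haar measure, and it factors $G_N$ compatibly into the transversal part $G_N/H_\ell$ and $H_\ell$. In these coordinates Remark~\ref{rem: kernel of fourier transform} presents $\pi_\ell(f)$, for $f\in\mathcal S(G_N)$, as the Hilbert--Schmidt operator on $L^2(G_N/H_\ell)\cong L^2(\R^p)$, with $p=\dim(G_N/H_\ell)$ independent of $\ell$ for $\ell$ in general position, given by the kernel
\[
K_f^\ell(\xi,\eta)=\int_{H_\ell}f\bigl(\gamma(\xi)\,u\,\gamma(\eta)^{-1}\bigr)\,e^{i\ell(\log u)}\,du .
\]
Under the identification $HS\bigl(L^2(\R^p)\bigr)\cong L^2(\R^p\times\R^p)$ the map $\mathcal F$ becomes $f\mapsto\bigl((\ell,\xi,\eta)\mapsto K_f^\ell(\xi,\eta)\bigr)$, and over the Zariski-open set $U\cap(\fancyg_N^*)_T$ of representatives of generic orbits (Remark~\ref{rem: choice of representant}, Lemma~\ref{lem: S and T}) I would read this as a composition of three maps, each a bijection on the relevant $L^2$-space: the linear change of variables coming from $\gamma$; a partial Fourier transform in the polarization directions $\fancyh_\ell$, which is exactly what produces the twist $e^{i\ell(\log u)}$; and absorption of the smooth, nowhere-vanishing density $\sqrt{\operatorname{det}D(\ell)}$ into the measure on $(\fancyg_N^*)_T$. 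Since a partial Fourier transform is onto, $\mathcal F\bigl(\mathcal S(G_N)\bigr)$ contains every Schwartz kernel field on $\bigl(U\cap(\fancyg_N^*)_T\bigr)\times\R^p\times\R^p$, which is dense in the direct integral; combined with closedness of the image, this yields surjectivity.

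I expect the main obstacle to be the bookkeeping when $N$ is even, since Corollary~\ref{cor: explicit polarization} shows that in that case $\fancyh_\ell$ genuinely varies with $\ell$, differing by the subspace $\sum_{m=1}^{m_{N/2}}\ker B_\ell^{N/2}(m,m)$ of $\rdbracket^{N/2}$. To make the partial Fourier transform step precise one has to select a measurable field of bases of $\fancyh_\ell$ over $U$ — feasible because these kernels depend rationally on $\ell$ on the Zariski-open set $U$ — and verify that the resulting field of partial Fourier transforms assembles into a measurable unitary between the two direct integrals; this is the standard ``variable polarization'' argument. The other potentially delicate point, matching the Jacobian of the combined change of variables with the Pfaffian-type density $\sqrt{\operatorname{det}D(\ell)}$, is precisely the content of the Plancherel identity and has already been absorbed into Corollary~\ref{cor: plancherel formula}; hence for surjectivity only the soft fact that a partial Fourier transform is onto is needed. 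As $G_N$ is a connected, simply connected nilpotent Lie group for which all the structural input (a strong Malcev basis, the explicit jump-set $S$ of Lemma~\ref{lem: S and T}, the inversion formula of Theorem~\ref{thm: fourier inversion}, and an explicit polarization) is available, this is exactly the setting of \cite[Theorem~4.3.10]{corwin1990representations}, whose argument carries over verbatim.
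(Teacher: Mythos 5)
Your proposal is correct and takes essentially the same route as the paper: the paper offers no independent argument, presenting this statement as a quoted result from Corwin--Greenleaf, which is exactly the citation you fall back on. Your sketch (closed range of the isometry from Corollary~\ref{cor: plancherel formula}, plus density of the image via the kernel realization of Remark~\ref{rem: kernel of fourier transform} and partial Fourier transforms along a measurable field of polarizations) is precisely the standard argument behind that cited theorem, so nothing further is needed.
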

		\begin{example}\label{ex:explicit formula for g2}
			We consider the group $G_2(\mathbb{R}^d)$ for $d$ even. Let $\{X_{i}\}_{i=1,\ldots,d} \cup \{X_{i,j}\}_{1\leq i<j\leq d}$ be a basis of $\fancyg_2(\mathbb{R}^d)$. By examples \ref{ex: general orbits of g2 pt1} and \ref{ex: general orbits of g2 pt2} we know that the functionals in general position are precisely those $\ell = \sum \alpha_i \ell_i + \sum_{i,j} \beta_{i,j} \ell_{i,j}$ such that $B_{\ell}^1 = (B_{\ell}^1)_{i,j} = \beta_{i,j}$ is invertible. Further, we know that for such functionals we have that
			\[\dim\left(\mathcal{O}_{\ell}/\fancyg_{2}^*(m)\right) = m \quad
			\text{for all } m = 1, \ldots, d\]
			In particular, we obtain that $d_{i,j} = 0$ for all $1\leq i<j \leq d$ and $d_{j} = j$ for all $j = 1, \ldots, d$. Hence, $S=\{i\}_{i=1,\ldots,d}$, $T = \{(i,j)\}_{1\leq i<j\leq d}$ and $U \cap (\fancyg_{N}^*)_T =\left\{ (\beta_{i,j})_{1\leq i<j\leq d} \in \mathbb{R}^n \; | \; B \text{ is invertible} \right\}$, where $n = \frac{d(d-1)}{2}$. Further, the matrix $D(\ell)$ is given by:
			\[
			D(\ell) = \begin{pmatrix} 0 & \beta_{1,2} & \beta_{1,3} & \cdots & 	\beta_{1,d} \\ -\beta_{1,2} & 0 & \beta_{2,3} & \cdots & \beta_{2,d} \\ -\beta_{1,3} & -\beta_{2,3} & 0 &\ddots & \vdots \\ \vdots & \vdots & \ddots & \ddots & \beta_{d-1,d} \\ -\beta_{1,d} & -\beta_{2,d} & \cdots & -\beta_{d-1,d} & 0 \end{pmatrix}
			\]
			and the Fourier inversion formula becomes
			\[
			f(x) = \int_{\mathbb{R}^n}\sqrt{\operatorname{det}D(\beta_{1,2},\beta_{1,3},\ldots, \beta_{d-1,d})}\operatorname{Tr}\left( \pi_{\ell}(x)^{-1}\pi_{\ell}(f)\right)d\beta_{1,2}d\beta_{1,3}\ldots d\beta_{d-1,d} \;.
			\]
			In particular we have that
			\[
			\|f\|_2^2 = \int_{\mathbb{R}^n}\|\pi_{\ell}(f)\|_{\text{HS}}^2\sqrt{\operatorname{det}D(\beta_{1,2},\beta_{1,3},\ldots ,\beta_{d-1,d})}\;d\beta_{1,2}d\beta_{1,3}\ldots d\beta_{d-1,d}
			\]		
		\end{example}
		\begin{example}
			Let $3\leq d \in \mathbb{N}$. Then, by Lemma \ref{lem: dimension formula} $m_2 = \dim[\mathbb{R}^d]^2 = \frac{d^2-d}{2}$ and $m_3 = \dim[\mathbb{R}^d]^3 = \frac{d^3-d}{3}$. Let $\{X_1^3, \ldots,X_{m_3}^3, \ldots, X_1^1, \ldots,X_d^1\}$ be a strong Malcev basis of $\fancyg_{3}$. Then, by Lemma \ref{lem: S and T} and Remark \ref{rem: dimension of S}
			\begin{align*}
				&S = \{X_1^2, \ldots, X_d^2,X_1^1, \ldots, X_d^1\}\\
				&|S| = 2d\\
				&T = \{X_1^3, \ldots, X_{m_3}^3,X_{d+1}^2, \ldots, X_{m_2}^2\}\\
				&|T| = d+\frac{1}{2}(d^2-d)+\frac{1}{3}(d^3-d)-2d =\frac{1}{3}d^3+ \frac{1}{2}d^2-\frac{11}{6}d
			\end{align*}
			The Fourier inversion formula is then given by
			\[f(x) = \int_{\mathbb{R}^{|T|}}\sqrt{\operatorname{det}D(\ell(\alpha))}\operatorname{Tr}\left( \pi_{\ell(\alpha)}(x)^{-1}\pi_{\ell(\alpha)}(f)\right)d\alpha_1^3\ldots d\alpha_{m_3}^3 d\alpha_{d+1}^2\ldots d\alpha_{m_2}^2 \;.\]
		\end{example}
		
		\begin{example}
			We take a closer look at the case $d = N = 3$. Let $\{X_1, X_2, X_3\}$ be an orthonormal basis of $\mathbb{R}$. For brevity we write $X_{(i,j)} = [X_i,X_j]$ and $X_{(i,j,s)} = [X_i,[X_j,X_s]]$. Then,
			\begin{align*}
				&\{X_{(1,2)}, X_{(1,3)}, X_{(2,3)}\} \quad \text{and}\\
				&\{X_{(1,1,2)}, X_{(1,1,3)}, X_{(1,2,3)}, X_{(2,1,2)}, X_{(2,1,3)}, X_{(2,2,3)}, X_{(3,1,3)}, X_{(3,2,3)}\}.
			\end{align*}
			are a basis of $ [\mathbb{R}^3]^2$ and $ [\mathbb{R}^3]^3$ respectively.
			By the previous example we know that
			\begin{align*}
				&S = \{1,2,3, (1,2), (1,3),(2,3)\} \; \text{and}\\
				& T = \{(1,1,2), (1,1,3),(1,2,3),(2,1,2),(2,1,3),(2,2,3),(3,1,3),(3,2,3)\} 
			\end{align*}	
			We set $\alpha_{i,j,s} = \ell(X_{(i,j,s)})$, then the matrix $D(\ell(\alpha))$ is given by 
			\[D(\ell(\alpha))=
			\begin{pmatrix}
				\alpha_{1,1,2} & \alpha_{1,1,3} & \alpha_{1,2,3} \\
				\alpha_{2,1,2} & \alpha_{2,1,3} & \alpha_{2,2,3} \\
				\alpha_{2,1,3}-\alpha_{1,2,3} & \alpha_{3,1,3} & \alpha_{3,2,3}
			\end{pmatrix}\]
			so that the Fourier inversion formula becomes
			\[f(x) = \int_{\mathbb{R}^{8}}\sqrt{\operatorname{det}D(\ell(\alpha))}\operatorname{Tr}\left( \pi_{\ell(\alpha)}(x)^{-1}\pi_{\ell(\alpha)}(f)\right)d\alpha_{1,1,2} d\alpha_{1,1,3}\ldots d\alpha_{3,2,3}\;,\]
			where $\ell(\alpha) = \sum_{(i,j,s) \in T}\alpha_{i,j,s}\ell_{(i,j,s)}$.
		\end{example}

		\newpage
		\begin{appendices}
			\section{}\label{secA1}
			\subsection{Additional Results and Definitions}
			\begin{definition}\label{def: graded}
				We say that a Lie algebra $\mathfrak{g}$ is graded when it is endowed with a vector space decomposition
				$$
				\mathfrak{g}=\bigoplus_{j=1}^{\infty} V_j \quad \text { such that } \quad\left[V_i, V_j\right] \subset V_{i+j}
				$$
				and all but finitely many of the $V_j$ 's are $\{0\}$. We call the subspace $V_j$ the $j$-th layer of $\fancyg$. A Lie group is called graded when it is a connected, simply connected Lie group whose Lie algebra is graded. Further, we say that the group is $n$-step graded if $V_n \neq \{0\}$ and $V_k = 0$ for all $k>n$. Note that any ($n$-step) graded Lie group is in particular ($n$-step) nilpotent.
			\end{definition}
			\begin{lemma}{(Dimension formula)}\label{lem: dimension formula}
				Let $d, k \in \mathbb{N}$. Then, the dimension of $[\mathbb{R}^d]^k$ is given by 
				\[\operatorname{dim}[\mathbb{R}^d]^k = \frac{1}{k}\sum_{n|k}\mu(n)d^{\frac{k}{n}} \;,\]
				where the sum is taken over all divisors $n$ of $k$, and $\mu$ is the Möbius function defined as
				\[\mu(n)= \begin{cases}1 & \text { if } n=1 \\ (-1)^k & \text { if } n \text { is the product of } k \text { distinct primes } \\ 0 & \text { if } n \text { is divisible by a square }>1\end{cases} \;.\]
			\end{lemma}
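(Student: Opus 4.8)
The plan is to identify $[\R^d]^k$ with the degree-$k$ homogeneous component of the free Lie algebra on $d$ generators, and then to derive the stated identity---the classical Witt formula---by comparing Hilbert series and applying Möbius inversion.

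First I would fix notation: set $c_k \coloneqq \dim[\R^d]^k$ and recall that $[\R^d]^k$ is the $k$-th layer of the \emph{free} $N$-step nilpotent Lie algebra $\fancyg_N(\R^d)$ (see Theorem~\ref{thm: GN is group with algebra gn} and the preceding discussion), hence, for every $k\leq N$, coincides with the degree-$k$ component of the free Lie algebra $L(\R^d)$, realized as the Lie subalgebra of the tensor algebra $\mathcal{T}\coloneqq\bigoplus_{j\geq 0}(\R^d)^{\otimes j}$ generated by $\R^d$; in particular $c_k$ is independent of $N$. The structural input is the Poincaré--Birkhoff--Witt theorem together with the fact that $\mathcal{T}$ is the universal enveloping algebra of $L(\R^d)$: grading $\mathcal{T}$ so that each generator in $\R^d$ has degree $1$, one obtains isomorphisms of graded vector spaces $\mathcal{T}\cong U(L(\R^d))\cong S(L(\R^d))$, the last being the symmetric algebra on $L(\R^d)$.

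Next I would pass to Hilbert series in a formal variable $t$. Since $\dim(\R^d)^{\otimes j}=d^j$, the Hilbert series of $\mathcal{T}$ is $\sum_{j\geq 0}d^j t^j=(1-dt)^{-1}$. Choosing a homogeneous basis of $L(\R^d)$ with exactly $c_k$ elements in degree $k$, the symmetric algebra $S(L(\R^d))$ has Hilbert series $\prod_{k\geq 1}(1-t^k)^{-c_k}$, so that
\[
\frac{1}{1-dt}=\prod_{k\geq 1}\frac{1}{(1-t^k)^{c_k}}
\]
as formal power series. Taking formal logarithms and using $-\log(1-x)=\sum_{m\geq 1}x^m/m$ gives $\sum_{n\geq 1}\frac{d^n}{n}t^n=\sum_{k\geq 1}c_k\sum_{m\geq 1}\frac{t^{km}}{m}$; comparing the coefficients of $t^N$ yields $d^N=\sum_{k\mid N}k\,c_k$ for every $N$. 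Möbius inversion (applied to $f(k)=k\,c_k$ and $g(N)=d^N$) then gives $k\,c_k=\sum_{n\mid k}\mu(n)\,d^{k/n}$, which is the asserted formula after dividing by $k$.

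The only non-routine step is the graded identification $\mathcal{T}\cong U(L(\R^d))\cong S(L(\R^d))$; everything after that is bookkeeping with formal power series. I would either cite this (e.g. Reutenauer's book on free Lie algebras, or Bourbaki), or, to keep the appendix self-contained, argue combinatorially: $k\,c_k$ equals the number of words of minimal period $k$ over a $d$-letter alphabet, and since every word of length $N$ has a unique primitive period dividing $N$ one again obtains $d^N=\sum_{k\mid N}k\,c_k$, after which the Möbius step is identical. I do not expect a genuine obstacle here, as the content is entirely classical.
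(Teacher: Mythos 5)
Your proposal is correct: it is the classical derivation of Witt's formula via the identification of $[\R^d]^k$ with the degree-$k$ component of the free Lie algebra, the Poincar\'e--Birkhoff--Witt comparison of Hilbert series, and M\"obius inversion. The paper offers no in-text argument for this lemma, citing Witt (1937) instead, and the proof in that reference is essentially the one you reproduce, so there is nothing to reconcile; just note that your optional ``self-contained'' combinatorial variant quietly invokes the Lyndon/Chen--Fox--Lyndon basis theorem (to equate $k\,c_k$ with the number of primitive words of length $k$), which is no more elementary than PBW, so the generating-function route is the one to keep.
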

			\begin{proof}
				See \cite{witt1937treue}.
			\end{proof}
			\begin{theorem}\label{thm: vergne}
				Let $(0) \subseteq \fancyg_1 \subseteq \fancyg_2 \subseteq \ldots \subseteq \fancyg_n$ be a chain of ideals in $\fancyg$ such that $\dim \fancyg_j = j$. For $\ell \in \fancyg^*$ define $r({l_j}) = \left\{Y \in \fancyg_j \; : \; \ell([X,Y]) = 0 \text{ for all } X \in \fancyg_j\right\}$. Then, $$\fancym_l = \sum_{j = 0}^n r(\ell_j)$$
				is a maximal subordinate subalgebra for $\ell$.
				\begin{proof}
					See \cite{vergne1971} or \cite[Theorem~1.3.5]{corwin1990representations}.
				\end{proof}
			\end{theorem}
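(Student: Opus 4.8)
The plan is to verify, for the alternating form $B_{\ell}(X,Y)=\ell([X,Y])$ attached to $\ell$ (so that $r(\ell_j)$ is precisely the radical of $B_{\ell}$ restricted to $\fancyg_j\times\fancyg_j$, with $r(\ell_0)=(0)$ and $r(\ell_n)=\fancyg^{\ell}:=\{Y\in\fancyg:B_{\ell}(Y,\cdot)\equiv 0\}$), the following three facts, which together give the claim because ``maximal subordinate subalgebra'' means exactly ``subordinate subalgebra of the largest possible dimension'': (a) $\fancym_{\ell}=\sum_{j=0}^{n}r(\ell_j)$ is subordinate to $\ell$; (b) $\fancym_{\ell}$ is a Lie subalgebra; (c) $\dim\fancym_{\ell}$ is maximal among subordinate subalgebras. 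I expect (a) and (b) to follow from short Jacobi-identity computations, and (c) to be the substantive step.

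For (a) and (b), fix $j_1\le j_2$ and take $X\in r(\ell_{j_1})\subseteq\fancyg_{j_1}\subseteq\fancyg_{j_2}$ and $Y\in r(\ell_{j_2})$. Then $B_{\ell}(X,Y)=0$ directly from the definition of $r(\ell_{j_2})$ since $X\in\fancyg_{j_2}$; by bilinearity this gives (a). For (b) it suffices to show $[r(\ell_{j_1}),r(\ell_{j_2})]\subseteq r(\ell_{j_1})$: because $\fancyg_{j_1}$ is an ideal of $\fancyg$ and $X\in\fancyg_{j_1}$ we have $[X,Y]\in\fancyg_{j_1}$, and for any $Z\in\fancyg_{j_1}$ the Jacobi identity $[Z,[X,Y]]=[[Z,X],Y]+[X,[Z,Y]]$ together with $[Z,X]\in\fancyg_{j_1}\subseteq\fancyg_{j_2}$ and $[Z,Y]\in\fancyg_{j_1}$ (both again in the relevant subspaces because the $\fancyg_j$ absorb brackets) forces $\ell([Z,[X,Y]])=\ell([[Z,X],Y])+\ell([X,[Z,Y]])=0$ by the definitions of $r(\ell_{j_2})$ and $r(\ell_{j_1})$. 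Hence $[X,Y]\in r(\ell_{j_1})\subseteq\fancym_{\ell}$.

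For (c) I would first dispatch the upper bound: any subordinate subalgebra $\fancyh$ is isotropic for $B_{\ell}$, i.e.\ $\fancyh\subseteq\fancyh^{\perp}$, and passing to the nondegenerate form induced by $B_{\ell}$ on $\fancyg/\fancyg^{\ell}$ gives the identity $\dim\fancyh+\dim\fancyh^{\perp}=\dim\fancyg+\dim(\fancyh\cap\fancyg^{\ell})$, whence $\dim\fancyh\le\tfrac12(\dim\fancyg+\dim\fancyg^{\ell})$. It then remains to establish the matching equality $\dim\fancym_{\ell}=\tfrac12(\dim\fancyg+\dim\fancyg^{\ell})$, which I would prove by induction on $n=\dim\fancyg$: restrict to the codimension-one ideal $\fancyg_{n-1}$ and the functional $\ell'=\ell|_{\fancyg_{n-1}}$, observe that the truncated chain is a flag of ideals of $\fancyg_{n-1}$ with $r(\ell_j)=r(\ell'_j)$ for $j\le n-1$, so $\fancym_{\ell}=\fancym_{\ell'}+\fancyg^{\ell}$ with $\fancym_{\ell'}$ a polarization of $\ell'$ by induction, and then split into cases: if $\fancyg^{\ell}\subseteq\fancyg_{n-1}$ then $\fancyg^{\ell}\subseteq r(\ell_{n-1})\subseteq\fancym_{\ell'}$ so $\fancym_{\ell}=\fancym_{\ell'}$, while if $\fancyg^{\ell}\not\subseteq\fancyg_{n-1}$ then $\fancym_{\ell}=\fancym_{\ell'}\oplus\R e$ for any $e\in\fancyg^{\ell}\setminus\fancyg_{n-1}$. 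Feeding in $\dim\fancym_{\ell'}=\tfrac12((n-1)+\dim r(\ell_{n-1}))$ reduces the claim to the numerical relation that $\dim r(\ell_{n-1})$ and $\dim\fancyg^{\ell}$ differ by $\pm1$ in the respective cases, equivalently that restricting $B_{\ell}$ to the hyperplane $\fancyg_{n-1}$ lowers its rank by $2$ when $\fancyg^{\ell}\subseteq\fancyg_{n-1}$ and by $0$ otherwise.

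I expect the main obstacle to be precisely this last point: controlling how the radical of an alternating form changes under restriction to a distinguished hyperplane, and checking that the resulting rank drop of $0$ or $2$ is matched exactly by the dichotomy $\fancyg^{\ell}\subseteq\fancyg_{n-1}$ versus $\fancyg^{\ell}\not\subseteq\fancyg_{n-1}$. This is a small but slightly delicate linear-algebra lemma; the remaining parts, (a), (b), and the upper bound in (c), are routine.
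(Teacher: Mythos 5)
The paper itself gives no argument here: the statement is Vergne's theorem, quoted with a pointer to \cite{vergne1971} and to Theorem~1.3.5 of \cite{corwin1990representations}. Your outline is precisely the standard proof found in those sources, and it is correct: (a) and (b) are complete as written, the bound $\dim\fancyh\le\tfrac12(\dim\fancyg+\dim\fancyg^{\ell})$ for isotropic subspaces is the standard symplectic estimate, and the induction on a codimension-one ideal with the dichotomy on $\fancyg^{\ell}\subseteq\fancyg_{n-1}$ is exactly how the dimension count is closed in Corwin--Greenleaf. The one step you leave open, the rank-drop lemma, does hold with exactly the dichotomy you state, and it is short: write $W=\fancyg_{n-1}$ and let $W^{\perp}$ be its $B_{\ell}$-orthogonal in $\fancyg$, so that $r(\ell_{n-1})=W\cap W^{\perp}$ and $\dim W^{\perp}=\dim\fancyg-\dim W+\dim(W\cap\fancyg^{\ell})=1+\dim(W\cap\fancyg^{\ell})$. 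If $\fancyg^{\ell}\subseteq W$, pass to the nondegenerate form on $\fancyg/\fancyg^{\ell}$: there $\overline{W}^{\perp}$ is a line $\R\bar v$, $\overline{W}=(\overline{W}^{\perp})^{\perp}$, and $\bar B(\bar v,\bar v)=0$ because the form is alternating, so $W^{\perp}\subseteq W$ and $\dim r(\ell_{n-1})=\dim W^{\perp}=\dim\fancyg^{\ell}+1$ (rank drops by $2$). If $\fancyg^{\ell}\not\subseteq W$, then $\dim W^{\perp}=\dim\fancyg^{\ell}$ together with $\fancyg^{\ell}\subseteq W^{\perp}$ forces $W^{\perp}=\fancyg^{\ell}$, hence $\dim r(\ell_{n-1})=\dim(W\cap\fancyg^{\ell})=\dim\fancyg^{\ell}-1$ (rank unchanged). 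Plugging these into your two cases yields $\dim\fancym_{\ell}=\tfrac12(\dim\fancyg+\dim\fancyg^{\ell})$ in both, matching the upper bound; the only unstated detail in the second case, namely $\fancyg^{\ell}\cap\fancyg_{n-1}\subseteq r(\ell_{n-1})\subseteq\fancym_{\ell'}$, is immediate from the definitions. So your plan is a complete and faithful reconstruction of the cited proof rather than a new route.
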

			\subsection{General orbits of \texorpdfstring{$G_3(\R^2)$}{G3(R2)}}\label{sec: General orbits of G3R2}
			We wish to characterize the orbits in general position of $G_3 = G_3(\R^2)$. The reason why we need to consider this group separately is that it is, in some sense, a degenerate case. It is in fact the only situation when for $k<N/2$ $\dim p_k(G_N(\R^d))> \dim p_{N-k}(G_N(\R^d))$, since $\dim \R^2 = 2 > 1 = \dim [\R^2]^2$.
			
			Let $X_1, X_2$ be a basis of $\R^2$ and set $X_{ij} = [X_i,X_j]$ and $X_{ijk} = [X_i,[X_j, X_k]]$. Then, a strong Malcev basis of $G_3$ is given by
			$\{X_{1,1,2}, X_{2,1,2}, X_{1,2}, X_1, X_2\}$. By example \ref{ex: explicit orbit of g3} we know that for any $\ell = \alpha_1l_1+ \alpha_2l_2+ \alpha_{1,2}\ell_{1,2}+ \alpha_{112}\ell_{112} + \alpha_{212}\ell_{212} \in \fancyg_3^*$ the orbit of $\ell$ is given by
			\begin{equation}\label{eq: explicit orbit of G3R2}
				\begin{aligned}
					\mathcal{O}_{\ell} = \Bigg\{ 
					&\sum_{i=1}^2 \Bigg( \alpha_i
					+ \sum_{j=1}^{2} g_j\, \alpha_{i,j} 
					+g_{1,2}\, \alpha_{i,1,2} + \frac{1}{2} \sum_{j=1}^{2} \sum_{s=1}^{2} g_j g_s\, \alpha_{j,i,s}
					\Bigg) \ell_i \\
					&+ \left( \alpha_{1,2}- \sum_{j=1}^{2} g_j\, \alpha_{j,1,2}) \right) \ell_{1,2}
					+ \alpha_{112}\ell_{112} + \alpha_{212}\ell_{212} \;\Big|\; (g_1,g_2, g_{1,2}) \in \mathbb{R}^3\Bigg\}\\
					= \Bigg\{ 
					&\Big( \alpha_1 
					+ g_2\, \alpha_{1,2} 
					+ g_{1,2}\, \alpha_{1,1,2} 
					+ \frac{1}{2} \big( 
					g_1 g_2\, \alpha_{1,1,2}
					+ g_2 g_2\, \alpha_{2,1,2} 
					\big) \Big) \ell_1 \\
					+ &\Big( \alpha_2 
					+ g_1\, \alpha_{2,1} 
					+ g_{1,2}\, \alpha_{2,1,2} 
					+ \frac{1}{2} \big( 
					g_1 g_1\, \alpha_{1,2,1}
					+ g_2 g_1\, \alpha_{2,2,1} 
					\big) \Big) \ell_2 \\
					+ &\left( \alpha_{1,2} 
					- g_1\, \alpha_{1,1,2} - g_2\, \alpha_{2,1,2} \right) \ell_{1,2} 
					+ \alpha_{1,1,2}\ell_{1,1,2} + \alpha_{2,1,2}\ell_{2,1,2}
					\;\Big|\, (g_1,g_2, g_{1,2}) \in \mathbb{R}^3
					\Bigg\} \;.
				\end{aligned}
			\end{equation}
			
			From this expression one can easily see that if $\alpha_{1,1,2} \neq 0$ or $\alpha_{2,1,2} \neq 0$, then $\dim \mathcal{O}_{\ell} \geq 2$. Further, since coadjoint orbits are always even dimensional, we also have $\dim \mathcal{O}_{\ell} \leq 2$, so that equality must hold in this case. This guarantees that if $\alpha_{1,1,2} \neq 0$ or $\alpha_{2,1,2} \neq 0$ then $\mathcal{O}_{\ell}$ has maximal dimension. However, to check if the orbit is in general position we need to consider the quotient spaces of $\fancyg_3^*$. By (\ref{eq: explicit orbit of G3R2}) we have that
			\begin{align*}
				\dim \mathcal{O}_{\ell}/\R\text{-span}\{\ell_1,l_2\} = \dim \left\{\left( \alpha_{1,2} 
				- g_1\, \alpha_{1,1,2} - g_2\, \alpha_{2,1,2} \right) \ell_{1,2}
				\;\Big|\, (g_1,g_2) \in \mathbb{R}^2
				\right\} \leq 1
			\end{align*}
			and clearly equality holds whenever $\alpha_{1,1,2} \neq 0$ or $\alpha_{2,1,2} \neq 0$. Further
			\begin{align*}
				\dim \mathcal{O}_{\ell}/\R\text{-span}\{\ell_2\} = \dim \Big\{ 
				&\Big( \alpha_1 
				+ g_2\, \alpha_{1,2} 
				+ g_{1,2}\, \alpha_{1,1,2} 
				+ \frac{1}{2} \left( 
				g_1 g_2\, \alpha_{1,1,2}
				+ g_2 g_2\, \alpha_{2,1,2} 
				\right) \Big) \ell_1 \\
				+ &\left( \alpha_{1,2} 
				- g_1\, \alpha_{1,1,2} - g_2\, \alpha_{2,1,2} \right) l_{1,2}
				\;\Big|\, (g_1,g_2, g_{1,2}) \in \mathbb{R}^3
				\Big\} \;.
			\end{align*}
			From this expression we see that if $\alpha_{1,1,2} \neq 0$, then $\dim \mathcal{O}_{\ell}/\R\text{-span}\{\ell_2\} = 2$. However, if $\alpha_{1,1,2} = 0$, then
			\begin{align*}
				\dim \mathcal{O}_{\ell}/\R\text{-span}\{\ell_2\} = \dim \Big\{ 
				\Big( \alpha_1 
				+ g_2\, \alpha_{1,2}
				+ \frac{1}{2}
				g_2 g_2\, \alpha_{2,1,2} 
				\Big) \ell_1 
				+ \left( \alpha_{1,2} 
				- g_2\, \alpha_{2,1,2} \right) l_{1,2}
				\;\Big|\, g_2 \in \mathbb{R}\Big\} \leq 1 \;.
			\end{align*}
			Hence, we obtain that $\ell$ is in general position if and only if $\alpha_{1,1,2} \neq 0$.
			
			
			
			
		\end{appendices}
		
		\newpage
		\bibliography{sn-article}
	\end{document}